\def\sqr#1#2{{\vcenter{\vbox{\hrule height.#2pt
              \hbox{\vrule width.#2pt height#1pt \kern#1pt \vrule width.#2pt}
              \hrule height.#2pt}}}}
\def\signed #1{{\unskip\nobreak\hfil\penalty50
              \hskip2em\hbox{}\nobreak\hfil#1
              \parfillskip=0pt \finalhyphendemerits=0 \par}}
\def\endpf{\signed {$\sqr69$}}
\def\3n{\negthinspace \negthinspace \negthinspace }
\def\2n{\negthinspace \negthinspace }
\def\1n{\negthinspace }
\def\dbE{\mathbb{E}}
\def\dbF{\mathbb{F}}
\def\dbH{\mathbb{H}}
\def\dbP{\mathbb{P}}
\def\dbR{\mathbb{R}}
\def\dbS{\mathbb{S}}
\def\sK{\mathscr{K}}
\def\sS{\mathscr{S}}
\def\sU{\mathscr{U}}
\def\sY{\mathscr{Y}}
\def\={\buildrel \triangle \over =}
\def\ds{\displaystyle}
\def\ns{\noalign{\ss}}
\def\a{\alpha}
\def\b{\beta}
\def\g{\gamma}
\def\d{\delta}
\def\e{\varepsilon}
\def\z{\zeta}
\def\k{\kappa}
\def\l{\lambda}
\def\m{\mu}
\def\n{\nu}
\def\si{\sigma}
\def\t{\tau}
\def\f{\varphi}
\def\th{\theta}
\def\o{\omega}
\def\i{\infty}
\def\D{\Delta}
\def\Th{\Theta}
\def\L{\Lambda}
\def\cB{{\cal B}}
\def\cC{{\cal C}}
\def\cD{{\cal D}}
\def\cE{{\cal E}}
\def\cF{{\cal F}}
\def\cH{{\cal H}}
\def\cK{{\cal K}}
\def\cL{{\cal L}}
\def\cP{{\cal P}}
\def\cQ{{\cal Q}}
\def\cS{{\cal S}}
\def\cU{{\cal U}}
\def\cY{{\cal Y}}
\def\cZ{{\cal Z}}
\def\no{\noindent}
\def\ss{\smallskip}
\def\ms{\medskip}
\def\q{\quad}
\def\qq{\qquad}
\def\liminf{\mathop{\underline{\rm lim}}}
\def\lan{\mathop{\langle}}
\def\ran{\mathop{\rangle}}
\def\esssup{\mathop{\rm esssup}}
\def\argmax{\mathop{\rm argmax}}
\def\h{\widehat}
\def\ti{\tilde}
\def\cd{\cdot}
\def\diam{\hbox{\rm diam$\,$}}
\def\tr{\hbox{\rm tr$\,$}}
\def\les{\leqslant}
\def\ges{\geqslant}
\def\({\Big (}
\def\){\Big )}
\def\[{\Big[}
\def\]{\Big]}
\def\bde{\begin{definition}}
\def\ede{\end{definition}}
\def\be{\begin{equation}}
\def\bel{\begin{equation}\label}
\def\ee{\end{equation}}
\def\bt{\begin{theorem}}
\def\et{\end{theorem}}
\def\bc{\begin{corollary}}
\def\ec{\end{corollary}}
\def\bl{\begin{lemma}}
\def\el{\end{lemma}}
\def\bp{\begin{proposition}}
\def\ep{\end{proposition}}
\def\bas{\begin{assumption}}
\def\eas{\end{assumption}}
\def\br{\begin{remark}}
\def\er{\end{remark}}
\def\ba{\begin{array}}
\def\ea{\end{array}}
\def\ed{\end{document}}
\def\square#1{\vbox{\hrule\hbox{\vrule height#1%
     \kern#1\vrule}\hrule}}
\def\rectangle#1#2{\vbox{\hrule\hbox{\vrule height#1%
     \kern#2\vrule}\hrule}}
\font\tenbb=msbm10 \font\sevenbb=msbm7 \font\fivebb=msbm5
\newtheorem{lemma}{Lemma}[section]
\newtheorem{remark}{Remark}[section]
\newtheorem{theorem}{Theorem}[section]
\newtheorem{corollary}{Corollary}[section]
\newtheorem{definition}{Definition}[section]
\newtheorem{proposition}{Proposition}[section]
\newtheorem{assumption}{Assumption}[section]
\begin{document}

\title{Infinite time horizon stochastic recursive   control problems with jumps: dynamic programming   and stochastic verification theorems
%\thanks{This work is supported  by NSF of
%P.R.China (No. 11971099) and  NSF of Jilin Province for Outstanding Young Talents (No. 20230101365JC).
%  }
}
 \author{
Sheng Luo \\
%EndAName
{\small Department of Applied Mathematics, The Hong Kong Polytechnic University,  Kowloon,
Hong Kong, P. R. China.}\\
{\small \textit{E-mail: 22038664r@connect.polyu.hk}}\\
%Wenqiang Li\thanks{%
%W. Li  acknowledges the financial support from the NSF of P.R. China (No. 12101537, 12271304) and the Doctoral Scientific Research Fund of Yantai University (No. SX17B09).
%}\\
%{\small School of Mathematics and Information Sciences, Yantai University,
%Yantai 264005, P. R. China.}\\
%{\small \textit{E-mail: wenqiangli@ytu.edu.cn}}\\
Xun Li\thanks{%
 X. Li acknowledges the financial support from the Research Grants Council of Hong Kong under grants (Nos.~15216720, 15221621, 15226922), PolyU 1-ZVXA, and 4-ZZLT.
 }\\
{\small Department of Applied Mathematics, The Hong Kong Polytechnic University,  Kowloon,
Hong Kong, P. R. China.}\\
{\small \textit{E-mail: li.xun@polyu.edu.hk  }}\\
 Qingmeng Wei{\thanks{%
Corresponding author. Q. Wei thanks  the financial support from NSF of Jilin Province for Outstanding Young Talents (No. 20230101365JC),  the National Key R\&D Program of China (No. 2023YFA1009002), the NSF of
P. R. China (Nos.  12371443, 11971099) and  the Fundamental Research Funds for the Central Universities (No. 2412023YQ003)}}\\
{\small School of Mathematics and Statistics, Northeast Normal University,
Changchun 130024, P. R. China.}\\
 {\small \textit{E-mail: weiqm100@nenu.edu.cn}}}
\date{\today}
\maketitle

\begin{abstract}
This paper is devoted to studying an infinite time horizon  stochastic recursive control problem  with jumps, where  infinite time  horizon stochastic differential equation   and backward stochastic differential equation   with jumps   describe  the state process and cost functional, respectively. For this, the first is to explore the  wellposedness and regularity   of  these two equations in $L^p$-sense ($p\ges2$). By  establishing the dynamic programming principle, we   relate the value function of the control problem with   integral-partial differential equation of HJB type in the sense of viscosity solutions.
On the other hand,   stochastic verification theorems are also  studied to provide   sufficient conditions  to verify  the   optimality of  the given admissible controls.
Such a study is carried out in the  framework   of  classical solutions but also in that of  viscosity solutions.
Our work emphasizes important differences from the approach for finite time horizon problems. In particular, we have to work in an $L^p$-setting for $p>4$ in order to study the verification theorem in viscosity sense. 

 %Due to the state evolves in infinite horizon, the stability of the control problem seems to be very important. We firstly study the wellposedness and the regularity  of the solutions of infinite horizon SDEs and BSDEs with jumps.
%Further, the control problem is studied based on dynamic programming principle, which is one of the main approaches in stochastic control theory. An elliptic Hamiltonian-Jacobi-Bellman equation with integral operators  is introduced to relate with the control problem. And the value function  solves the HJB equation in viscosity sense. Finally, we  convert to the construction of the feedback optimal control and present stochastic verification theorems of the control problem within the framework of classical solutions and viscosity solutions.
%

\end{abstract}

\noindent \textbf{Keywords:}
 infinite horizon  FBSDEs with jumps,  dynamic programming, viscosity solution,  HJB equation, integral-differential operators, stochastic verification theorem.

\section{Introduction}
With the development of nonlinear backward stochastic differential equations (BSDEs, for short)   initially introduced by Pardoux and   Peng \cite{PP-1990}, a kind of BSDEs  driven   by both a Brownian motion and  an independent  Poisson  random measure  was proposed by  Tang and   Li \cite{Tang-Li-1994}.  Such equations are customarily called  as BSDEs with jumps.
  Numerous studies  about  BSDEs with jumps  have   been published for different applications,  such as  in stochastic control, for partial differential equations (PDEs, for short) and  in  mathematical finance. Let us in particular emphasize the growing number on works on stochastic control problems involving BSDEs with jumps. These studies concern the dynamic programming principle (see, for instance 
\cite{BBP-1997, BHL-2011,  LLW-2021,  LP-2009,  LW-AMO}), the stochastic  maximum principle (the reader is, e.g., referred to \cite{Shen-Siu-2013, STW-2020}) as well as linear quadratic stochastic control problems (\cite{Wei-Xu-Yu-2023,  Yu-2017}).  
However, the papers cited above use finite time horizon BSDEs with jumps. The studies for the corresponding  cases involving infinite time horizon BSDEs are very few, and this namely concerning   the
  dynamic programming.

 Based on the above review of literatures, we shall focus on  a kind   of infinite horizon stochastic recursive control problems with jumps and carry out the study by the approach of dynamic programming.  To be precise, the controlled dynamical   system evolves
 following  an infinite horizon forward-backward stochastic differential equations (FBSDEs, for short),
  which   consists of a forward equation  describing the state process,   and a backward   equation  defining the cost functional.
 It will be seen later that this is a meaningful  problem, which can not be solved directly by  combing or extending the existing results.  As the main features of this work, infinite horizon and Poisson random measure indeed bring  us  some unexpected difficulties. Meanwhile,  the  study of the $L^p$-theory ($p\ges2$)  of infinite horizon FBSDEs with jumps and the associated control problems  help us to reveal  several  essential differences from the finite horizon problems. Compared with Buckdahn et al. \cite{BHL-2012},   the semi-convexity of our value function holds only when $p>4$.  For the stochastic verification theorem in the viscosity solution framework, $p>4$ is also unavoidable,  which never occur in the finite horizon  case, referring to \cite{CL-arxiv, GSZ1, GSZ2, LHW-2023, ZYL}. 
 In the following, we elaborate the  details and  also present  the main contributions   of this work.

% During the study, there are some essential difficulties and differences  from the finite horizon control problems. Let's make the following detailed explanations.
%

 First of all,   compared with finite horizon problems,  the study on the   global integrality of our  controlled dynamical system is an unavoidable part.   We try to use  the dissipative condition   in dynamical
systems here, but need to  adjust it to adapt to our FBSDEs with jumps.
 However,   the usual  wellposedness, regularity and  stability in   $L^2$ sense are not enough for  our study on the control problem.  The  global integrality in $L^p$ (with $p>2$) framework turns out to be the first really difficulty. Meanwhile, the Poisson random measure makes the $L^p$-theory much harder to reach.
We make the great effort to reconcile these two aspects. A  new dissipative condition is concluded by the subtle procedures and  technical derivations.
 The new  condition  not only  ensures the  global integrality  in $L^p$  (with $p>2$) sense, but also is applicable  to   another  jump-diffusion  systems, referring to Propositions \ref{LemmaA-1} and   \ref{LemmaA-1.2}.
Moreover, it  can
degenerate   into the classical ones by setting $p=2$  or further omitting the jump terms.

Based on the well-prepared research on  infinite horizon FBSDEs with jumps,
  an infinite horizon optimal recursive  control problem (Problem (OC)$_p$) is  formulated   for any given  $p\ges 2$,  denoted by Problem (OC)$_p$ in Section 3.
   We make a  systematic   study  of  Problem (OC)$_p$, especially the properties of the value function $V_p(\cd)$ defined in \eqref{bar u}, such as the Lipschitz continuity, the linear growth and the semi-convexity.
The reason of studying Problem (OC)$_p$ with $p\geq 2$ rather than  the usual  $p=2$, is that the   value function $V_p(\cd)$ may have different properties under the different $p$, in particular,  the semi-convexity.
 In fact,  changing  $p $ effects  the conditions imposed on the coefficients  $b,\si,\g,  f $ of the controlled system  through  the assumption  {\bf(C1)$_p$} and the set $\cU_{0,\i}^p$ of admissible controls, and, by consequence, the behaviour   of the state processes $X^{x;u}$ and the cost functionals $Y^{x;u}$.
No matter which value   $p $ may take, once fixed, the Lipschitz continuity and  linear growth of $V_p(\cd)$ can be concluded under the corresponding assumptions. But the situation  is quite different for  the semi-convexity of $V_p(\cd)$, which turns out to hold only for $p>4$; the reader is referred to  the  proof of Proposition \ref{Le-Semi}. Unlike   finite time horizon stochastic control problems (such as in \cite{BHL-2012,  Jing-2013, LHW-2023, YZ}),
this is a completely novel situation we meet with our  infinite time horizon case.

Subsequently, we establish the dynamic programming principle (DPP, for short) of our Problem (OC)$_p$ (with $p\geq 2$) by introducing a measurable and measure-preserving shift. The value function $V_p(\cd)$ is not necessarily smooth   under our conditions, so that a kind of weak solution  is considered here, in order to characterize $V_p(\cd)$ as solution of an associated    PDE. That is the viscosity solution, which was introduced  first    by Crandall and Lions \cite{CL-1983}. We    also refer  to    Crandall et al. \cite{CLP-1992}.  More precisely, our value function is proved to solve   an HJB equation  with  integral-differential operator  in viscosity sense.
We point out  that     the study of this part does not need the semi-convexity of $V_p(\cd)$. Therefore, the above conclusion holds for any given $p\geq 2$.
In this manner, our stochastic control problem   provides  a  probabilistic interpretation for a kind of integral-PDEs of   HJB  type     through its value function.

Finally, we  put our attention to the verification of the optimality of  controls for Problem (OC)$_p$, i.e., we are interested in the study of an  adequate   stochastic verification theorem. It is a crucial issue for the application of the control theory.
This work is  carried out   in  two different frameworks,  that for a classical solution and that for a viscosity solution.
We give   sufficient conditions for the verification of the optimality of controls  for  our Problem (OC)$_p$, and this for our both cases that of a classical solution and that of a viscosity solution  of the HJB equation  with  integral-differential operator.
It should be pointed out that in the case of a viscosity solution,  the semi-convexity of the solution of HJB equation \eqref{HJB-W-2} is crucial.  For this reason we show  in Section 3  that the value function  $V_p(\cd)$ of  Problem (OC)$_p$ as a viscosity solution of \eqref{HJB-W-2}  really
 satisfies this property for $p>4$ under some additional conditions on the coefficients.  This indicates that  the stochastic verification theorem for Problem (OC)$_p$  in the viscosity solution framework  holds just  true for $p>4$, rather than for $p\geq2$. This    is a major difference  from the  finite time horizon stochastic control problems.

%
% Finally,  we indicate that     the study of DPP does not need the semi-convexity of $V_p(\cd)$. Then, for any given $p\ges 2$, Problem (OC)$_p$ can relate with a HJB equation with integral-differential operators.   However,   the semi-convexity of $V_p(\cd)$ is indispensable  for stochastic verification theorem under the viscosity solution framework.
% Therefore,
%
% Therefore, $p$ is not distinguished  in Section 4, but in
% Sections 3 and 5.
%

The structure  of this manuscript is as follows. As the research basis, we first study the well-posedness and regularity  of infinite horizon SDEs and BSDEs with jumps in Section 2. Then,    infinite horizon stochastic recursive control denoted by Problem (OC)$_p$   is formulated in Section 3. The properties of the value function are  also studied in this section.    In Section 4   we  associate  Problem (OC)$_p$   with an  HJB equation with integral-differential  operators by   establishing  the DPP.
 Finally, Section 5  studies the  stochastic verification theorem  for Problem (OC)$_p$  in both the classical and the viscosity solutions frameworks.

\section{Infinite horizon   forward-backward stochastic differential equations with jumps}

%\subsection{Preliminaries}
Let us begin with introducing   the underlying probability space $(\Omega,\mathcal{F},\dbP)$, which is given as the completed product space of the following both spaces:
%Wiener space $(\Omega_1,{\mathcal{F}_1},\dbP_1)$, the Poisson spaces  $(\Omega_2,{\mathcal{F}_2},\dbP_2)$ and  $(\Omega_3,{\mathcal{F}_3},P_3)$ as follows:
%The following  three probability spaces $(\Omega_1,{\mathcal{F}_1},\dbP_1)$, $(\Omega_2,{\mathcal{F}_2},\dbP_2)$ and $(\Omega_3,{\mathcal{F}_3},P_3)$ are given as
 %
\begin{itemize}
  \item $(\Omega_1,{\mathcal{F}_1},\dbP_1)$  is a classical Wiener space. More precisely,  $\Omega_1=C_0(0,\i;\mathbb{R}^d)$ is the set of all continuous functions   $\o:[0,\i)\to\mathbb{R}^d$ with $\o(0)=0$; $\mathcal{F}_1$ is the completed Borel $\sigma$-field on $\Omega_1$; $\dbP_1$ is the Wiener measure making the canonical process $B_s(\omega)=\omega(s)$, $s\ges0$, $\omega\in\Omega_1$,    a $d$-dimensional Brownian motion.
Denote the filtration on $(\Omega_1,{\mathcal{F}_1},\dbP_1)$ by $\mathbb{F}^B=(\mathcal{F}_s^B)_{s\ges 0}$ with
 $\mathcal{F}_s^B:=\sigma\big\{B_r, r\in[0,s]\big\}\vee \mathcal{N}_{\dbP_1},$
where $\mathcal{N}_{\dbP_1}$ is the collection of all $\dbP_1$-null sets.

  \item  $(\Omega_2,{\mathcal{F}_2},\dbP_2)$ is  a Poisson space. That is, $\Omega_2$ is the set of all point functions ${p}: D_{{p}} \rightarrow E$ with $D_{{p}}\subseteq  [0,\i)$ being  countable   and $E=\mathbb{R}^{l}\backslash\{0\}$ being equipped with its Borel $\sigma$-field  $\mathcal{B}(E)$. Denote by $\mathcal{F}_2$    the smallest $\sigma$-field on $\Omega_2$ such that the coordinate mapping ${p}\rightarrow \mu\big({p},(s,t]\times \Delta\big),$  $ \Delta\in\mathcal{B}(E),$  $ t>s\ges 0$,  is measurable.
%We introduce the counting measure $\mu(\bar{p},dtde)$  on $\mathbb{R}\times E$ as follows:
%    %
%$$\mu\big(\bar{p},(s,t]\times \Delta\big):=\sharp\big\{r\in D_{\bar{p}}\cap (s,t]\mid \bar{p}_r\in\Delta\big\},\ \Delta\in\mathcal{B}(E),\ s,t\in\mathbb{R},\ s<t.$$
%%
%  We identify the point function $\bar{p}$ with $\mu(\bar{p},\cdot)$.
%
%
   $\dbP_2$ is    the probability on the measurable space $(\Omega_2,\mathcal {F}_2)$  ensuring   the coordinate measure $\mu({p},\mathrm dt\mathrm de)$ to be  a Poisson random measure with the compensator $\hat{\mu}(\mathrm dt,\mathrm de):=\mathrm dt\l(\mathrm{d}e)$ and the process $\big\{\tilde{\mu}\big((s,t]\times A\big)\big\}_{s\les t}:=\big\{(\mu-\hat{\mu})\big((s,t]\times A\big)\}_{s\les t}$  to be   a martingale, for any $A\in\mathcal {B}(E)$ satisfying $\l(A)<\infty$. Here $\l$ is assumed to be a $\sigma$-finite measure on $\big(E,\mathcal {B}(E)\big)$ with $\displaystyle \int_E\big(1\wedge|e|^2\big)\l(\mathrm{d}e)<\infty.$
The filtration generated by $\mu$ on  $(\Omega_2,{\mathcal{F}_2},\dbP_2)$ is denoted by $(\mathcal{F}_s^\mu)_{s\ges 0}$ with
   $\mathcal{F}_s^\mu:=\big(\bigcap\limits_{r>s}\dot{\mathcal{F}}_r^\mu\big)\vee\mathcal{N}_{\dbP_2}$, where
 $\dot{\mathcal{F}}_r^{\mu}:=\sigma\big\{\mu\big((t,\t]\times \Delta\big),\ 0\les t\les \t\les r, \ \Delta\in \mathcal{B}(E)\big\},\ r\ges 0.$

\end{itemize}
  Then, by setting  $\Omega:=\Omega_1\times \Omega_2$, $\mathcal {F}:=\mathcal {F}_1\otimes \mathcal {F}_2$, $\dbP:=\dbP_1\otimes \dbP_2$  with $\mathcal {F}$ being  completed with respect to $\dbP$, we accomplish  the introduction of  the underlying probability space $(\Omega,\mathcal{F},\dbP)$. Also, we denote by $\mathbb{F}=(\mathcal{F}_s)_{s\ges 0}$    the filtration on  $(\Omega,{\mathcal{F}},\dbP)$ with $\mathcal{F}_s:=   \mathcal{F}_s^B\otimes\mathcal{F}_s^\mu$  augmented by all $\dbP$-null sets.

%Let  $\dbH$ be a Hilbert space with the inner product $\lan\cd,\cd\ran_\dbH$ and the norm $\|\cd\|_\dbH$.
 Let $\dbH$  be any  Euclidean space or any subspace of  an Euclidean space, whose  Euclidean  inner product and  norm are denoted by $\lan\cd,\cd\ran$ and $|\cd|$, respectively. %
 For $t\ges0$, $T>t$ and $ p\ges 1$, we introduce the following  spaces of   processes or variables:
%We define  the following spaces of processes.
\noindent \begin{itemize}[leftmargin=*]
%\item ${\mathcal{S}}_\mathbb{F}^{2}(t,\infty;\mathbb{R}^n):=   \Big\{\psi \mid \psi :\Omega\times [t,\infty)
%\rightarrow \mathbb{R}^n$ is an  $\dbF-$adapted c\`{a}dl%
%\`{a}g process$,\\
%\mbox{\hskip9cm }$and $\mathbb{E}\Big[\sup\limits_{s\in [t,\infty)}|\psi_{s}|^{2}\Big]<\i
%\Big\}; $
\item $L_{\cF_t}^{p}(\Omega;\mathbb{H}):=   \Big\{\xi \!\mid \!\xi\! : \Omega   \rightarrow \mathbb{H}$ is  $\cF_t $-measurable with $\mathbb{E}  |\xi|^{p} < \infty \Big\};$

\item $L^p_{\l}\big(E;\mathbb{H}\big):=   \Big\{K \!\mid\! K\!: E \rightarrow \mathbb{H}$ is $\mathcal{B}(E)$-measurable with $|
K|_{\l,p}:= \displaystyle  \(\int_E|K(e)|^p\l(\mathrm{d}e)\)^\frac1p  <\i  \Big\};$

% \item $\textcolor[rgb]{1.00,0.00,0.00}{L_\mathbb{F}^{\i}(t,\infty;\mathbb{H})}:=   \Big\{\varphi \!\mid\!\varphi :\Omega\times[t,\infty)  \rightarrow \mathbb{H}$ is  $\dbF $-progressively measurable with  $ \esssup\limits_{(\o,s)\in \Omega\times[t,\i)} |\varphi _{s}(\o)|   < \infty \Big\};$

 \item $ \cS_\mathbb{F}^{p}(t,T;\mathbb{H}) :=   \Big\{\varphi \!\mid\!\varphi\! :\Omega\times[t,T]  \rightarrow \mathbb{H}$ is  $\dbF $-progressively measurable with  $ \dbE\[\esssup\limits_{ s \in  [t,T]} |\varphi _{s} |^p\]   < \infty \Big\};$

 \item $L_\mathbb{F}^{p}(t,\infty;\mathbb{H}):=   \Big\{\varphi \!\mid\!\varphi\! :\Omega\times[t,\infty)  \rightarrow \mathbb{H}$ is  $\dbF $-progressively measurable with $\mathbb{E} \displaystyle \[\int_{t}^{\infty}|\varphi _{s}|^{p}\mathrm ds\]  < \infty \Big\};$

\item $\cK _\mathbb{F}^{  p}( t,\infty; \mathbb{H}):=   \Big\{K\! \mid\! K\!:\Omega\times [t,\infty)\times E \rightarrow \mathbb{H}$ is $\mathcal {P}_{t,\infty}$\footnote{$\cP_{t,\i}$ denotes the $\sigma$-field of $\dbF$-predictable subsets of $\Omega\times [t,\i)$. }$\otimes\mathcal{B}(E)$-measurable

     \hskip 8cm with $  \ds\mathbb{E}  \[ \int_t^\infty \int_E\!|K_s(e)|^p\l(\mathrm{d}e)\mathrm ds \]<  \infty \Big\}; $

 \item $L_\mathbb{F}^{p}(\Omega;L^2(t,\infty;\mathbb{H})):=   \Big\{\varphi \!\mid\!\varphi \!:\Omega\times[t,\infty)  \rightarrow \mathbb{H}$ is  $\dbF $-progressively measurable

     \hskip 8cm  with  $\mathbb{E} \displaystyle \[\(\int_{t}^{\infty}|\varphi _{s}|^{2}\mathrm ds\)^\frac p2\] \!<\! \infty \Big\};$

%\item $\mathcal{M}_\mathbb{F}^{2,\th }(t,\infty;\mathbb{R}^d):=   \Big\{\varphi \mid\varphi :\Omega\times[t,\infty)  \rightarrow \mathbb{R}^d$ is an $\dbF-$%
%progressively measurable  process,  and $\mathbb{E}\Big[\displaystyle\int_{t}^{\infty}|e^{-\th  t}\varphi _{t}|^{2}dt\Big]<\i  \Big\};$
%
%\item $\mathcal{M}^2_{\mathbb{F},loc}(0,\infty;\mathbb{R}^d)=\Big\{\varphi \mid\varphi :\Omega\times[0,\infty)\rightarrow \mathbb{R}^d$ is an $\dbF-$progressively measurable  process, \\
%\mbox{\hskip6cm } and $\mathbb{E}\Big[\displaystyle\int_{0}^{T}|\varphi _{t}|^{2}dt\Big]<\i $, for any $T\geq 0$  \Big\}

\item $  \sK_\mathbb{F}^{p}( t,\infty; \mathbb{H}):=   \Big\{K \mid\! K \!:\Omega\times [t,\infty)\times E  \to  \mathbb{H}$ is $\mathcal {P}_{t,\infty}\otimes\mathcal{B}(E)$-measurable with

   \hskip 8cm with   $ \mathbb{E} \[\(  \displaystyle\int_t^\infty\! \! \int_E  |K_s(e)|^2\l(\mathrm{d}e)\mathrm ds   \)^\frac p2  \]
   %+ \mathbb{E} \[   \displaystyle \(\int_t^\infty\! \! \int_E  |K_s(e)|^\frac p2 \l(\mathrm{d}e)\mathrm ds \)^ 2      I_{\{p>4\}}\]
   <   \infty  \Big\};  $

%\item $\mathcal {K}_{\nu,loc}^2(0,\infty;\mathbb{R}):=   \Big\{K \mid K:\Omega\times [t,\infty)\times E \rightarrow \mathbb{R}$ is $\mathcal {P}_{t,\infty}\otimes\mathcal{B}(E)-$measurable, \\
%\mbox{\hskip3cm } and $\parallel
%K\parallel^2_{\mathcal {K}_{\l,2}^2(t,\infty;\mathbb{R})}=\mathbb{E}\Big[\displaystyle\int_0^T\int_E|K_s(e)|^2\l(\mathrm{d}e)\mathrm ds\Big] <\i ,$ for any $T\geq 0 \Big\};$
%

\item $\ds\sS^p_\dbF(t,\i):=  L_\mathbb{F}^{p}(\Omega;L^2(t,\infty;\mathbb{R}^m)) \times L_\mathbb{F}^{p}(\Omega;L^2(t,\infty;\mathbb{R}^{m\times d}))\times \sK_\mathbb{F}^{p}( t,\infty; \mathbb{R}^m)$.

\end{itemize}
%

%The remaining research in this section will be carried out with $p\ges2$.

\subsection{Infinite horizon     stochastic differential equations with jumps}
 For $p\ges 2$,    $(t,\xi)\in [0,\i)\times L_{\cF_t}^p(\Omega;\dbR^n)$, we consider the following infinite horizon SDE with jumps,
 \bel{SDE}
 \left\{\ba{ll}
 \ds\!\!\!\! \mathrm  dX_s=b(s,X_s)\mathrm ds+\si(s,X_s) \mathrm dB_s+\int_E\g(s,e,X_{s-})\ti{\m}(\mathrm ds,\mathrm de),\ s\ges t,\\
  \ds\!\!\!\! X_t=\xi,
 \ea\right.\ee
where the coefficients $b:\Omega\times[0,\i)\times\dbR^n \to\dbR^n$,  $\si:\Omega\times[0,\i)\times\dbR^n\to\dbR^{n\times d}$, $\g: \Omega\times[0,\i)\times E \times\dbR^n\to\dbR^n$ are assumed to satisfy the following conditions.

 \ss

 {\bf (A1)}     There exist three  constants $L_b,  L_\si, L_1 \ges 0 $, and  a mapping $l_\g(\cd)\in L_{\l}^2(E; {[0,1]}) \cap L_{\l}^p(E; {[0,1]}) $   such that, for all $s\ges 0$, $x,x'\in\dbR^n$, $e\in E$,
  $$\ba{ll}\ds |b(s,x)-b(s,x')|\les L_b|x-x'|,\  |\si(s,x)-\si(s,x')|\les L_\si  |x-x'|,\\
  \ds  |\g(s,e,x)-\g(s,e,x') | \les L_1l_\g(e)  |x-x'|,\q \dbP\mbox{-a.s.}\ea$$

 {\bf (A2)}    $b(\cd,0)\in  L_\dbF^p(0,\i;\dbR^n)$, $\si(\cd,0)\in L_\dbF^p( 0,\i;\dbR^{n\times d} )$, $ \g(\cd,\cd,0)\in \cK _\mathbb{F}^{  p}( 0 ,\infty; \mathbb{R}^n)  $.

\ss

 {\bf (A3)} There exists a constant $\eta_b>0$ such that, for all $s\ges 0$, $x,x'\in\dbR^n$,
 $$\ds\lan b(s,x)-b(s,x'),x-x'\ran \les -\eta_b|x-x'|^2,\q \dbP\mbox{-a.s.}$$

 {\bf (A4)}     $b(\cd,0)\in  L_\dbF^p (\Omega;L^2(0,\i;\dbR^n))$, $\si(\cd,0)\in L_\dbF^p  (\Omega;L^2(0,\i;\dbR^{n\times d}))$, $ \g(\cd,\cd,0)\in \sK _\mathbb{F}^{  p}(0,\infty; \mathbb{R}^n) $.

\ms

 It is classical that,   under {\bf (A1)} and  {\bf (A2)} with $p\ges 2$,      for  $(t,\xi)\in [0,\i)\times L_{\cF_t}^p(\Omega;\dbR^n)$  and $T>t$, SDE \eqref{SDE} admits a unique solution $X(\cd)\in \cS_\dbF^p(t,T;\dbR^n)$.
In the following,   we first  present a property of the global solution $X(\cd)\in L_\dbF^p(t,\i;\dbR^n)$  of SDE \eqref{SDE}   under the hypotheses  {\bf (A1)},  {\bf (A2)} and {\bf (A4)}.

\bl\label{Xto0} \sl Let  {\bf (A1)}, {\bf (A2)} and {\bf (A4)}  hold true with some $p\ges 2$. If $X \in L_\dbF^p(t,\i;\dbR^n)$ is the  unique solution of SDE with jumps  \eqref{SDE}, then $\lim\limits_{s\to \i}\dbE^{\cF_t}\big[|X(s)|^p\big]=0$, $\dbP$-a.s.

\el
The  proof of Lemma \ref{Xto0} is  similar to  Proposition 2.1 in \cite{Yu-2017}, which concerns   the case $p=2$. For $p>2$, we only need to use the techniques in \cite{Yu-2017} and
     It\^o's formula for $ |   X_\cd  |^p$  ($p\ges 2$) as follows:
%\bel{X-ito-p}\ba{ll}
%\ns\ds     |X_s |^p -|\xi |^p  =\! \int_t^s   \[\frac p2|X_r |^{p-2} \big(2\lan X_r  ,  b(r,X_r )\ran +  |\si(r,X_r )  |^2\big)      +   p(\frac p2-1)|X_r |^{p-4} |\si(r,X_r )^\top X_r|^2 \]    \mathrm dr   \\
%  %
%  \ns\ds\q  +\int_t^s\int_E\[|X_{r-}  + \g(r,e,X_{r-} ) |^p- |X_{r-} |^p-p |X_{r-}|^{p-2}    \lan X_{r-} ,\g(r,e,X_{r-} )\ran   \] \m(\mathrm dr,\mathrm d e)  \\
%  %
% \ns\ds\q   +\int_t^sp|X_r |^{p-2}    \lan X_r , \si(r,X_r )    \mathrm dB_r\ran+\int_t^s\int_Ep|X_{r-} |^{p-2}    \lan X_{r-} ,\g(r,e,X_{r-} ) \ti{\m}(  \mathrm dr,\mathrm d e)\ran,\q s\ges t.
% \ea\ee
%%
\bel{X-ito-p}\ba{ll}
 \ds     |X_s |^p -|\xi |^p  =\! \int_t^s   \[\frac p2|X_r |^{p-2} \big(2\lan X_r  ,  b(r,X_r )\ran +  |\si(r,X_r )  |^2\big)      +   p(\frac p2-1)|X_r |^{p-4} |\si(r,X_r )^\top X_r|^2 \]    \mathrm dr   \\
 \ns\ds\q  +\int_t^s\int_E\[|X_{r}  + \g(r,e,X_{r} ) |^p- |X_{r} |^p-p |X_{r}|^{p-2}    \lan X_{r} ,\g(r,e,X_{r} )\ran   \] \l(\mathrm de)\mathrm d r \\
 \ns\ds\q   +\int_t^sp|X_r |^{p-2}    \lan X_r , \si(r,X_r )    \mathrm dB_r\ran+\int_t^s\int_E\[|X_{r-}  + \g(r,e,X_{r-} ) |^p- |X_{r-} |^p\]\ti{\m}(  \mathrm dr,\mathrm d e) ,\q s\ges t.
 \ea\ee
  We notice  that, all the integrals on the right  side of \eqref{X-ito-p} make  sense under our conditions  {\bf (A1)}, {\bf (A2)}, {\bf (A4)}.   The readers may refer to Theorem 93 in \cite{Situ}.
\ss

Before the study, we recall a useful    inequality for  the Poisson stochastic  integral, see  \cite{FK-1989} or  Theorem 4.4.23  in  \cite{Applebaum-2009}.
%or  the lecture\footnote{Menaldi, Jose L., ``Stochastic Differential Equations with Jumps" (2008). Mathematics Faculty Research Publications. 77.
%https://digitalcommons.wayne.edu/mathfrp/77.}.
For the readers' convenience, we present its proof.

 \bl\label{Lemma-2.2} \sl Let $T>t$ and $h(\cd,\cd)\in \cK_\dbF^p(t,T;\dbR^n)\cap \sK_\dbF^p(t,T;\dbR^n)$ with $p\ges 2$. Then,
 $$ \dbE^{\cF_t}\[\sup\limits_{s\in[t,T]}\Big|\int_t^s\!\int_Eh(r,e)\ti{\m}( \mathrm dr, \mathrm de)\Big|^p\]\les  C_p\dbE^{\cF_t}\[ \int_t^T\!\int_E|h(r,e)|^p\l(\mathrm de)\mathrm dr  +  \(\int_t^T\!\int_E|h(r,e)|^2\l(\mathrm de)\mathrm dr\)^\frac p2  \].
 $$
 Here, $\dbE^{\cF_t}[\cdot]:=\dbE[\cdot\mid\cF_t]$.
 \el

 \begin{proof}
  Putting $\ds I_s:=\int_t^sh(r,e)\ti{\m}( \mathrm dr, \mathrm de)$, $s\in[t,T]$, and applying It\^o's formula to $|I_s|^p$, we get
 $$\ba{ll}
\ds \dbE^{\cF_t}[|I_s|^p]=\dbE^{\cF_t}\[\int_t^s\int_E\big(|I_r+h(r,e)|^p-|I_r|^p-p|I_r|^{p-2}\lan I_r,h(r,e)\ran \big) \l(\mathrm de)\mathrm dr\]\\
\ns\ds\les  p(p-1)\dbE^{\cF_t}\[\int_t^s\int_E\int_0^1(1-\th) |I_r+\th h(r,e)|^{p-2} |h(r,e)|^2\mathrm d\th \l(\mathrm de)\mathrm dr\] \\
 \ns\ds\les p(p-1)\dbE^{\cF_t}\[\int_t^s\int_E\int_0^1(1-\th) 2^{p-2}\big(|I_r|^{p-2} |h(r,e)|^2+|h(r,e)|^p\big)\mathrm d\th \l(\mathrm de)\mathrm dr\].\ \ea$$
By using that $I_\cd$ is a martingale and applying   Doob's inequality, we get
  $$\ba{ll}
\ds \dbE^{\cF_t}\[\sup\limits_{s\in[t,T]}|I_s|^p\]\les   C_p\dbE^{\cF_t}\[\int_t^T\int_E \big(|I_r|^{p-2} |h(r,e)|^2+|h(r,e)|^p\big)  \l(\mathrm de)\mathrm dr\]\\
%
%\ns\ds \les    C_p\dbE^{\cF_t}\[\int_t^T\int_E  |I_r|^{p-2} |h(r,e)|^2  \l(\mathrm de)\mathrm dr\]+ C_p\dbE\[\int_t^T\int_E |h(r,e)|^p  \l(\mathrm de)\mathrm dr\] \\
%%
\ds \les    C_p\dbE^{\cF_t}\[ \sup\limits_{r\in[t,T]}|I_r|^{p-2}\int_t^T\int_E  |h(r,e)|^2   \l(\mathrm de)\mathrm dr+\int_t^T\int_E   |h(r,e)|^p   \l(\mathrm de)\mathrm dr\] \\
 \ds \les    \frac12 \dbE^{\cF_t}\[ \sup\limits_{r\in[t,T]}|I_r|^{p }\]+C_p\dbE^{\cF_t}\[\(\int_t^T\int_E  |h(r,e)|^2   \l(\mathrm de)\mathrm dr\)^\frac p2+\int_t^T\int_E   |h(r,e)|^p   \l(\mathrm de)\mathrm dr\] . \ea$$
 Therefore, the stated result is obtained.

 \end{proof}

Next, we study the    global  integrability of  the solution of the infinite horizon  SDE with  jumps \eqref{SDE}.
For this, we introduce  the following abbreviations,
\bel{cp}\ba{ll}
\ds c_p: =p(p-1)\big(2^{-1}I_{\{2< p<3\}}+ 2^{p-4}I_{\{p=2\} \cup\{ p\ges 3\}}\big),\\
 \ds \eta_{b,p}:= 2\eta_b-(p-1)L_\si  ^2-\frac{2c_p}p L_{\g,2} ^2-  c_p  L_{\g,p} ^p,\q L_{\g, p} :=L_1 \(\!\int_E\!|l_\g(e)|^p\l(\mathrm de)\!\)^\frac 1p . \ea\ee

\br\label{eta-p-2}\sl
  We claim that,  for  any given  $p>2$,    $\eta_{b,p}>0 $  implies $\eta_{b,2}>0 $.
In fact, $\eta_{b,2} = 2\eta_b- L_\si  ^2-   L_{\g,2} ^2 $,   and
$$\ba{ll}
\ds \eta_{b,2}-\eta_{b,p}=2\eta_b- L_\si  ^2-   L_{\g,2} ^2 -2\eta_b+(p-1)L_\si  ^2+\frac{2c_p}p L_{\g,2} ^2+  c_p  L_{\g,p} ^p\\
 \ds =   (p-2)L_\si  ^2+ (\frac{2c_p}p -1)L_{\g,2} ^2+ c_p  L_{\g,p} ^p >0, \mbox{ when }  p>2.
\ea$$

\er

\bp\label{LemmaA-1}\sl Let $p\ges 2$ be arbitrarily  given, and  assume    {\bf (A1)}-{\bf (A3)}  and  $\eta_{b,p}>0$ hold true.   Then the solution $X(\cd)$ of the   infinite horizon  SDE  with jumps \eqref{SDE}
  belongs to  $ L_\dbF^p(t,\i;\dbR^n) $. Moreover, there exists some constant $C_p>0$ depending on $p$  and $\eta_b, L_b,L_\si,L_1,\ell_{\g}(\cd)$, such that,   for all $s\ges t,$ $\e>0$, $\dbP$-a.s.
\bel{SDE-est-1}\ba{ll}
 \ds {\rm (i)}\ \dbE^{\cF_t}\[\int_t^\i   | X_r  |^p\mathrm dr \]\les C_p \(|\xi |^p+\dbE^{\cF_t} \[\int_t^\i   \big( | b(r,0)|^p +  |  \si(r,0)  |^p   +|\g(r,\cd,0) |^p_{\l,p}  \big)    \mathrm dr \] \),\\
\ns\ds  {\rm (ii)}\  \dbE^{\cF_t}\big [ |X_s |^{p}\big] \les C_p\( |\xi |^p    + \dbE ^{\cF_t}\[\int_t^s   \big( | b(r,0)|^p +  |  \si(r,0)  |^p   +|\g(r,\cd,0) |^p_{\l,p}  \big)   \mathrm dr\]\)e^{-\frac p2(\eta_{b,p}-\e)(s-t)} , \\
\ns\ds  {\rm (iii)}\  \dbE^{\cF_t}\[\sup\limits_{s\in[t,\i)}   |X_s |^{p} \] \les    C_p\( |\xi |^p     + \dbE^{\cF_t} \[\int_t^\i  \big( | b(r,0)|^p +  |  \si(r,0)  |^p   +|\g(r,\cd,0) |^p_{\l,p}  \big)    \mathrm dr \]\) .
\ea \ee

\ep

\begin{proof}
First, note that, for any $r\ges t$, $e\in E$, $\dbP$-a.s.,
$$\ba{ll}
 \ds |X_{r-}  + \g(r,e,X_{r-} ) |^p- |X_{r-} |^{p}-p |X_{r-} |^{p-2}    \lan X_{r-} ,\g(r,e,X_{r-} )\ran \\
\ns\ds  \les  p(p-1)\int_0^1(1-\th) |X_{r-}  +\th \g(r,e,X_{r-} ) |^{p-2}|\g(r,e,X_{r-}) |^2   \mathrm d\th \\
\ns\ds  \les\left\{\ba{ll}
             \ds  \!\!\! p(p-1) 2^{-1}\big( |X_{r-} |^{p-2}|\g(r,e,X_{r-} ) |^2+|\g(r,e,X_{r-} ) |^p  \big), \q\ 2< p<3,   \\
             \ns\ds \!\!\!  p(p-1) 2^{p-4}\big( |X_{r-} |^{p-2}|\g(r,e,X_{r-} ) |^2+|\g(r,e,X_{r-} ) |^p  \big), \q p=2,\   p\ges 3,
              \ea  \right.\\
   \ns\ds  =  c_p\big( |X_{r-} |^{p-2}|\g(r,e,X_{r-} ) |^2+|\g(r,e,X_{r-} ) |^p \big ) ,

\ea$$
where $c_p $ is the constant defined  in \eqref{cp}.
Then,  from  It\^o's formula \eqref{X-ito-p}, for any  $s\ges t$, $\dbP$-a.s., we have
\bel{X-ito-1}\ba{ll}
 \ds     |X_s |^p \les   |\xi |^p  + \int_t^s\! \frac p2|X_r |^{p-2} \big(2\lan X_r  ,  b(r,X_r )\ran +(p-1)|  \si(r,X_r )  |^2 \big)     \mathrm dr\\
 \ns\ds\qq    +  \int_t^s       \int_Ec_p \big(|X_{r-} |^{p-2}|\g(r,e,X_{r-}  ) |^2 +|\g(r,e,X_{r-} ) |^p  \big  )        \m(\mathrm dr,\mathrm d e)\\
 \ns\ds\qq   +\int_t^sp|X_r |^{p-2}    \lan X_r , \si(r,X_r )    \mathrm dB_r\ran +\int_t^sp|X_{r-}|^{p-2}    \lan X_{r-} ,\int_E \g(r,e,X_{r-} ) \ti{\m}(  \mathrm dr,\mathrm d e)\ran.
\ea\ee

Taking the conditional expectation $\dbE^{\cF_t}[\cd] $ on the both sides of \eqref{X-ito-1}, we get
\no\bel{X-1}\ba{ll}
 \ds   \dbE^{\cF_t}[ |X_s |^{p}]\les  |\xi |^p + \dbE^{\cF_t}\[ \int_t^s c_p |\g(r,\cd,X_{r} ) |^p_{\l,p}          \mathrm dr \]  \\
 \ns\ds +\dbE^{\cF_t}\[\!  \int_t^s\!\frac p2|X_r |^{p-2} \(2\lan X_r,  b(r,X_r )\ran +(p-1)|  \si(r,X_r )  |^2+ \frac{2c_p}p   |\g(r,\cd,X_r ) |^2_{\l,2}\)     \mathrm dr \]  \!\! \\
 \ns\ds =|\xi |^p+ \dbE^{\cF_t}\[  \int_t^s\int_E  c_p  |\g(r,e,X_r )-\g(r,e,0)+\g(r,e,0 )|^p \l(\mathrm de)\mathrm dr\]\\
 \ns\ds \q +\dbE^{\cF_t}\[\int_t^s\frac p2|X_r |^{p-2} \(2\lan X_r,  b(r,X_r )-b(r,0)\ran +(p-1)|  \si(r,X_r )-\si(r,0)  |^2\\
 \ns\ds \hskip3 cm+\frac{2c_p}p  |\g(r,\cd,X_r )-\g(r,\cd,0) |_{\l,2}^2\)     \mathrm dr \] \\
  \ns\ds \q  +\dbE^{\cF_t}\[\int_t^s \frac p2|X_r |^{p-2} \( 2\lan X_r,   b(r,0)\ran +2(p-1) \lan  \si(r,X_r )-\si(r,0) , \si(r,0) \ran + (p-1)|   \si(r,0)  |^2\\
      \ns\ds\hskip3.5cm +\frac{4c_p}p \int_E \lan \g(r,e,X_r )-\g(r,e,0),\g(r,e,0) \ran\l(\mathrm de) +\frac{2c_p}p | \g(r,\cd,0) |_{\l,2}^2\)     \mathrm dr  \]    \\
 \ds\les  |\xi |^p +\frac p2  \(-2\eta_b+(p-1)L_\si  ^2+\frac{2c_p}p  L_{\g,2} ^2+\e\)\dbE^{\cF_t}\[\int_t^s  | X_r  |^p\mathrm dr\]\\
   \ns\ds\q     +C_{p,\e}\dbE^{\cF_t}\[\int_t^s   \big( | b(r,0)|^p +  |  \si(r,0)  |^p   +|\g(r,\cd,0) |^p_{\l,p}   \big)   \mathrm dr\]\\
 \ns\ds \q  + \dbE^{\cF_t}\[  \int_t^s\int_E   c_p \big( (L_1  l_\g(e))^p| X_r |^p+|\g(r,e,0 ) |^p+C\sum\limits_{k=1}^{[p]} (L_1l_\g(e))^{p-k}| X_r |^{p-k}|\g(r,e,0 ) |^k\big) \l(\mathrm de)\mathrm dr\]    \\
  \ns\ds\les  |\xi |^p +\frac p2  \(-2\eta_b+(p-1)L_\si  ^2+\frac{2c_p}p  L_{\g,2} ^2+  c_p  L_{\g,p} ^p+2\e\)\dbE^{\cF_t}\[\int_t^s  | X_r  |^p\mathrm dr\]\\
   \ns\ds\q     +C_{p,\e}\dbE^{\cF_t}\[\int_t^s   \big( | b(r,0)|^p +  |  \si(r,0)  |^p   +|\g(r,\cd,0) |^p_{\l,p}   \big)   \mathrm dr\],\q \dbP\mbox{-a.s.},
\ea\ee
  where we have used the Young inequality and  {the {Banach-Saks inequality}, for which we refer to Page 133 in \cite{Kuang}}. %$|a|^{p-k}|b|^k\les \e|a|^{p}+\frac kp\(\frac{p-k}{p\e}\)^{\frac{p-k}{k}}|b|^p$.
%$|a|^{p-k}|b|^k\les \e|a|^{p}+\frac kp\(\frac{p-k}{p\e}\)^{\frac{p-k}{k}}|b|^p$.

Recall that $ \eta_{b,p} >0 $. Thus,  taking $\e>0$   sufficiently small such that $\eta_{b,p}-2\e>0$ and letting $s\to\i$, we get, for some $C_p$ depending on $\e>0$,
\bel{X-p-1}
   \dbE^{\cF_t}\[  \int_t^\i   | X_r  |^p\mathrm dr\] \les C_p \(|\xi |^p+\dbE^{\cF_t}\[ \int_t^\i   \big( | b(r,0)|^p +  |  \si(r,0)  |^p   +|\g(r,\cd,0) |^p_{\l,p}  \big)    \mathrm dr \]\),\  \dbP\mbox{-a.s.}, \ t\in\dbR^+.\!\!\!
 \ee
Moreover, applying Gronwall's inequality to \eqref{X-1}, we see that  \eqref{SDE-est-1}-(ii) holds true.
%
%\bel{X-2}\ba{ll}
%\ns\ds   \dbE^{\cF_t} [ |X_r |^{p}] \les \( |\xi |^p    + \dbE ^{\cF_t}\[\int_t^s   \big( | b(r,0)|^p +  |  \si(r,0)  |^p   +|\g(r,\cd,0) |^p_{\l,p}  \big)   \mathrm dr\]\)e^{-\eta_{b,p}(s-t)}  ,
%\ea\ee
%%
%which implies $\lim\limits_{s\to\i} \dbE [ |X_r |^{p}]=0.$

Furthermore, from \eqref{X-ito-1}, by employing the   techniques already  used for \eqref{X-1}, we have,  $\dbP$-a.s.
\bel{X-2}\ba{ll}
 \ds   \dbE^{\cF_t}\[\sup\limits_{s\in[t,\i)}   |X_s |^{p} \] \les C_p\(  |\xi |^p +  \dbE^{\cF_t}\[  \int_t^\i  \big( | X_r|^p +| b(r,0)|^p +  |  \si(r,0)  |^p   +|\g(r,\cd,0) |^p_{\l,p} \big)   \mathrm dr \]\) \\
 \ns\ds\hskip4cm +\dbE^{\cF_t}\[\sup\limits_{s\in[t,\i)}\Big|\! \int_t^sp|X_r |^{p-2}    \lan X_r , \si(r,X_r )    \mathrm dB_r\ran \Big|\]\\
  \ns\ds\hskip4cm+\dbE^{\cF_t}\[\sup\limits_{s\in[t,\i)}\!\Big| \!\int_t^sp|X_{r-}  |^{p-2}    \lan X_{r-} ,\int_E \g(r,e,X_{r-}  ) \ti{\m}(  \mathrm dr,\mathrm d e)\ran \!\Big| \] \!\!\!\!\\
\ns\ds  \les  C_p\(  |\xi |^p +  \dbE^{\cF_t}\[  \int_t^\i  \big(|X_r |^ p + | b(r,0)|^p +  |  \si(r,0)  |^p   +|\g(r,\cd,0) |^p_{\l,p} \big)   \mathrm dr \] \)  + \frac12\dbE^{\cF_t}\[\sup\limits_{s\in[t,\i)}   |X_s|^{p} \],
\ea\ee
where we have used the  following estimates,
%Using BDG inequality, we have
%
$$\ba{ll}
 \ds    \dbE^{\cF_t}\[\sup\limits_{s\in[t,\i)} \Big|\int_t^s p|X_r |^{p-2}    \lan X_r , \si(r,X_r )    \mathrm dB_r\ran \Big| \]  \les  2\dbE^{\cF_t}\[ \( \int_t^\i p^2 |X_r |^{2p-2}    | \si(r,X_r )|^2   \mathrm  dr \)^\frac12\] \\
\ns\ds\les 2 \dbE^{\cF_t}\[ \sup\limits_{s\in[t,\i)}|X_s |^\frac p2  \(\int_t^\i p^2|X_r |^{p-2}    | \si(r,X_r )|^2   \mathrm    dr \)^\frac12\] \\
\ns\ds\les  \frac14\dbE^{\cF_t}\[ \sup\limits_{s\in[t,\i)}|X_s |^ p  \]+C_p \dbE^{\cF_t}\[ \int_t^\i  |X_r |^{p-2}    | \si(r,X_r )|^2 \mathrm      dr  \] \\

\ns\ds\les  \frac14\dbE^{\cF_t}\[ \sup\limits_{s\in[t,\i)}|X_s |^ p  \]+ C_{p}\dbE^{\cF_t}\[ \int_t^\i (|X_r |^{p}    +  | \si(r,0 )|^p )  \mathrm dr   \] ,\q \dbP\mbox{-a.s.},
\ea$$
and
$$\ba{ll}
 \ds    \dbE^{\cF_t}\[\sup\limits_{s\in[t,\i)}  \Big| \int_t^sp |X_{r-}  |^{p-2}    \lan X_{r-}  ,\int_E \g(r,e,X_{r-}  ) \ti{\m}(  \mathrm dr,\mathrm d e)\ran \Big| \] \\
\ns\ds\les \frac14\dbE^{\cF_t}\[ \sup\limits_{s\in[t,\i)}|X_s |^ p  \]+ C_{p}\dbE^{\cF_t}\[ \int_t^\i (|X_r |^{p}    +  | \g(r,\cd,0 )|_\l^p )  \mathrm   dr   \],\q \dbP\mbox{-a.s.}
\ea$$
Finally, combining \eqref{X-2}   with \eqref{X-p-1},   we obtain \eqref{SDE-est-1}-(iii).
% \bel{E-sup-p}\ba{ll}
%\ns\ds   \dbE^{\cF_t}\[\sup\limits_{s\in[t,\i)}   |X_r |^{p} \] \les    C_p\( |\xi |^p     + \dbE^{\cF_t} \[\int_t^\i  \big( | b(r,0)|^p +  |  \si(r,0)  |^p   +|\g(r,\cd,0) |^p_{\l,p}  \big)    \mathrm dr \]\), \q \dbP\mbox{-a.s.}
%   %
%\ea\ee
 %
\end{proof}

Furthermore, we  have the following
integrability of $X(\cd)$:

\bp\label{LemmaA-1.2}\sl
For any given $p\ges 2$, assume the conditions  {\bf (A1)}-{\bf (A4)} and   $\eta_{b,p}>0$.   Then,
 \bel{SDE-est-2}
  \ba{ll}
  \ds \dbE^{\cF_t}\[\(\int_t^\i   | X_r  |^2\mathrm dr\)^\frac p2 \]\les  C_p\Big\{|\xi|^p  +\dbE^{\cF_t}\[ \(\int_t^\i    \big(|b(r,0)|^2+  |\si(r,0)  |^2+  |\g(r,\cd,0 )|^2_{\l,2}\big)      \mathrm dr \)^\frac p2 \]\\
 \ns\ds\hskip4.8cm    +\dbE^{\cF_t}\[  \int_t^\i  \( | b(r,0)|^p +  |  \si(r,0)  |^p   +|\g(r,\cd,0) |^p_{\l,p} \)\mathrm dr \]  \Big\} .
 \ea
 \ee

\ep
%
% Note that under the conditions of  Proposition \ref{LemmaA-1.2},  we always have $ \g(\cd,\cd,0)  \in \sK_\mathbb{F}^{2 }( t,\infty; \dbR^n)$.

\begin{proof}
When $p=2$, by  using the  techniques of \eqref{X-1}, we have from \eqref{X-ito-p},
$$\ba{ll}
 \ds    |X_s |^2    =|\xi|^2+ \int_t^s   \big(2\lan X_r ,  b(r,X_r )\ran +|  \si(r,X_r )  |^2 +|\g(r,\cd,X_r ) |_{\l,2}^2 \big)     \mathrm dr    \\
 \ns\ds\qq\q  +2 \int_t^s    \lan X_r , \si(r,X_r )    \mathrm dB_r\ran  +  \int_t^s\int_E   \big( |\g(r,e,X_{r-}  ) |^2+2\lan X_{r-}  , \g(r,e,X_{r-}  ) \ran  \big) \ti{\m}( \mathrm dr, \mathrm de) \\

 % = |\xi|^2+ \int_t^s   \big(2\lan X_r ,  b(r,X_r )-b(r,0)\ran +|  \si(r,X_r )-\si(r,0)  |^2+|\g(r,\cd,X_r)-\g(r,0,\cd)|_{\l,2}^2  \big)     \mathrm dr  \\
% %
% \ns\ds \q+ \int_t^s   \big(2\lan X_r ,   b(r,0)\ran +2|  \si(r,X_r )-\si(r,0)  |\cd|  \si(r,0)  | +|   \si(r,0)  |^2-|\g(r,\cd,X_r)-\g(r,0,\cd)|_{\l,2}^2  \big)     \mathrm dr\\
% %
% \ns\ds\q +\int_t^s  \int_E|\g(r,e,X_r ) |^2\m(\mathrm dr,\mathrm de)  +2 \int_t^s    \lan X_r , \si(r,X_r )    \mathrm dB_r\ran  +2 \int_t^s\int_E    \lan X_r , \g(r,e,X_r )    \ti{\m}( \mathrm dr, \mathrm de)\ran  \\
%%
%\ns\ds \les |\xi|^2+ \int_t^s   (-2\eta_b+L_\si ^2+L_{\g,2} ^2)| X_r|^2      \mathrm dr   + \int_t^s   \big(2\lan X_r ,   b(r,0)\ran +2|  \si(r,X_r )-\si(r,0)  |\cd|  \si(r,0)  | +|   \si(r,0)  |^2   \big)     \mathrm dr\\
% %
% \ns\ds\q +\int_t^s  \int_E|\g(r,e,X_r)-\g(r,e,0) |^2(\m(\mathrm dr,\mathrm de) -\l(\mathrm de)\mathrm dr)+\int_t^s  \int_E2|\g(r,e,X_r)-\g(r,e,0) |\cd| \g(r,e,0) | \m(\mathrm dr,\mathrm de) \\
%  %
%  \ns\ds\q +\int_t^s  \int_E| \g(r,e,0) |^2 \m(\mathrm dr,\mathrm de)  +2 \int_t^s    \lan X_r , \si(r,X_r )    \mathrm dB_r\ran  +2 \int_t^s\int_E    \lan X_r , \g(r,e,X_r )    \ti{\m}( \mathrm dr, \mathrm de)\ran \\
  %
 \ns\ds \les  |\xi|^2+  (-2\eta_b+L_\si ^2+ L_{\g,2} ^2+\e) \int_t^s | X_r|^2      \mathrm dr   +C_\e \int_t^s   \big( |   b(r,0)  |^2+|   \si(r,0)  |^2   +|\g(r,\cd,0 ) |_{\l,2}^2\big)     \mathrm dr \\
  \ds\q  + \int_t^s\int_E   |\g(r,e,X_{r-} ) |^2   \m ( \mathrm dr, \mathrm de) - \int_t^s\int_E   |\g(r,e,X_r ) |^2  \l ( \mathrm de)  \mathrm dr   \\
  \ns\ds\q +2 \int_t^s    \lan X_r , \si(r,X_r )    \mathrm dB_r\ran  + \int_t^s\int_E   2\lan X_{r-}  , \g(r,e,X_{r-}  ) \ran    \ti{\m}( \mathrm dr, \mathrm de) ,\q \dbP\mbox{-a.s.}
\ea$$
Then,
\bel{X-2-1}\ba{ll}
 \ds    |X_s |^2 + ( \eta_{b,2} -\e) \int_t^s  | X_r|^2      \mathrm dr +\int_t^s  \int_E|\g(r,e,X_r)  |^2 \l(\mathrm de)\mathrm dr  \\
\ns\ds \les  C\(|\xi|^2   +\int_t^s   \big( |   b (r,0)  |^2+|   \si(r,0)  |^2  +|\g(r,\cd,0 ) |_{\l,2}^2  \big)     \mathrm dr  \\
 \ns\ds\q +\int_t^s  \int_E2\big( L_1^2l_\g(e)^2| X_{r-} |^2 + | \g(r,e,0) |^2\big)\m(\mathrm dr,\mathrm de) \\
 \ns\ds\q  +2 \int_t^s    \lan X_r , \si(r,X_r )    \mathrm dB_r\ran  +2 \int_t^s\int_E    \lan X_{r-} , \g(r,e,X_{r-} )    \ti{\m}( \mathrm dr, \mathrm de)\ran\),\q \dbP\mbox{-a.s.}
\ea\ee
For what follows, we put
  $$  \L_s:= |\xi|^p            +  \(\int_t^s    \big(|b(r,0)|^2+  |\si(r,0)  |^2  +|\g(r,\cd,0 ) |_{\l,2}^2 \big)      \mathrm dr \)^\frac p2 ,\q s\ges t.$$
  Due to Remark \ref{eta-p-2},
  we can  choose    $\e>0$  satisfying $\eta_{b,2}-\e>0$.
  Then,  for $p\ges2$, from \eqref{X-2-1} we have
{$$\ba{ll}
 \ds\dbE^{\cF_t}\[  \(    \int_t^s  |X_r|^2\mathrm dr \)^\frac p2\]
 \les  C_p\dbE^{\cF_t}\[ \L_s   + {\(\int_t^s  \!\int_El_\g(e)^2| X_{r-} |^2  \m(\mathrm dr,\mathrm de)  \)^\frac p2}  +\(\int_t^s\!  \int_E | \g(r,e,0) |^2\m(\mathrm dr,\mathrm de) \)^\frac p2 \\
 \ns\ds\hskip 4.5cm  + \Big|\int_t^s    \lan X_r , \si(r,X_r )    \mathrm dB_r\ran\Big|^\frac p2  + {\Big| \int_t^s\!\int_E    \lan X_{r-} , \g(r,e,X_{r-} )    \ti{\m}( \mathrm dr, \mathrm de)\ran\Big|^\frac p2}\]\\
 \ns\ds\hskip3.5cm  = C_p\dbE^{\cF_t}[ \L_s ]  +  C_p(a_s+b_s+c_s+d_s),
%
% %
\ea$$
where, for $s\ges t$,
 $$\ba{ll}
 \ds a_s:=\dbE^{\cF_t}\[\(\int_t^s  \int_El_\g(e)^2| X_{r-} |^2  \m(\mathrm dr,\mathrm de)  \)^\frac p2\],\qq  b_s:= \dbE^{\cF_t}\[\(\int_t^s  \int_E | \g(r,e,0) |^2\m(\mathrm dr,\mathrm de) \)^\frac p2 \],\\
%\
\ns\ds c_s:=\dbE^{\cF_t}\[\Big| \int_t^s    \lan X_r , \si(r,X_r )    \mathrm dB_r\ran \Big|^\frac p2   \] ,\qq   d_s: =\dbE^{\cF_t}\[   {\Big| \int_t^s\int_E    \lan X_{r-} , \g(r,e,X_{r-} )    \ti{\m}( \mathrm dr, \mathrm de)\ran\Big|^\frac p2}\].
\ea
$$

We point out that,   $l_\g(e)\in[0,1],$ $e\in E$,  assumed in {\bf (A1)},  implies that,
\bel{a-s}   a_s=\dbE^{\cF_t}\[\(\int_t^s  \int_El_\g(e)^2| X_{r-} |^2  \m(\mathrm dr,\mathrm de)  \)^\frac p2\]\les C_p\dbE^{\cF_t}\[  {\int_t^s  \int_El_\g(e)^2  |X_r|^p   \l (\mathrm de)\mathrm dr}\].
\ee
The proof  is similar to that of   inequality (3.15) in Li, Wei \cite{LW-SPA}.

For  $p\ges 2$, using the BDG inequality  and Lemma \ref{Lemma-2.2} yields
\bel{b-s}\ba{ll}
 \ds b_s
%= \dbE^{\cF_t}\[\(\int_t^s  \int_E | \g(r,e,0) |^2\m(\mathrm dr,\mathrm de) \)^\frac p2 \]
\les C_p \dbE^{\cF_t}\[\sup\limits_{\t\in[t,s]}\Big|\int_t^\t  \int_E\g(r,e,0)  \ti \m(\mathrm dr,\mathrm de)  \Big|^p \]\\
\ns\ds\q  \les C_p\dbE^{\cF_t}\[\(\int_t^s  \int_E | \g(r,e,0) |^2  \l(\mathrm de)\mathrm dr\)^\frac p2 + \int_t^s  \int_E  | \g(r,e,0) |^p  \l(\mathrm de)\mathrm dr \].
\ea
\ee
Using the BDG inequality again, we have
$$\ba{ll}
 \ds c_s
%=\dbE^{\cF_t}\[ \( \int_t^s    \lan X_r , \si(r,X_r )    \mathrm dB_r\ran \)^\frac p2   \]
\les\dbE^{\cF_t}\[ \( \int_t^s  |X_r |^2\cd| \si(r,X_r ) |^2   \mathrm dr\)^\frac p4  \]\\
%
%\ns\ds\q\les\dbE^{\cF_t}\[ \sup\limits_{r\in[t,s]}|X_r|^{\frac p2}\(\int_t^s   (| \si(r,0 ) |^2 +|  X_r  |^2 )  \mathrm dr\)^\frac p4\]\\
%%
\ns\ds\q \les C_p\dbE^{\cF_t}\[\sup\limits_{r\in[t,s]}|X_r|^{\frac p2}\(\int_t^s    | \si(r,0 ) |^2  \mathrm dr\)^\frac p4  + \sup\limits_{r\in[t,s]}|X_r|^{\frac p2}\(\int_t^s    |  X_r  |^2    \mathrm dr\)^\frac p4\]\\
\ns\ds\q \les \dbE^{\cF_t}\[C_p\sup\limits_{r\in[t,s]}|X_r|^{p}+\(\int_t^s    | \si(r,0 ) |^2  \mathrm dr\)^\frac p2  + \frac12\(\int_t^s    |  X_r  |^2    \mathrm dr\)^\frac p2\].
\ea
$$
 Finally, for $d_s$, by using \eqref{a-s} and  \eqref{b-s}, we obtain
$$\ba{ll}
 \ds d_s
%=\dbE^{\cF_t}\[   {\( \int_t^s\int_E    \lan X_r , \g(r,e,X_r )    \ti{\m}( \mathrm dr, \mathrm de)\ran\)^\frac p2}\]
\les C_p \dbE^{\cF_t}\[   {\( \int_t^s\int_E   | X_{r-} |^2\cd| \g(r,e,X_{r-} ) | ^2   {\m}( \mathrm dr, \mathrm de) \)^\frac p4}\] \\
%
%\ns\ds\q = C_p\dbE^{\cF_t}\[\(\sum\limits_{r\in D_p,r\in(t,s]}  |X_{r-}|^2\cd|\g(r,p(r),X_{r-})|^2   \)^\frac p4\]\\
%%
\ns\ds\q \les C_p \dbE^{\cF_t}\[  \sup\limits_{r\in[t,s]}| X_r |^\frac p2 {\( \int_t^s\int_E  | \g(r,e,X_{r-} ) | ^2   {\m}( \mathrm dr, \mathrm de)\)^\frac p4}\] \\
\ns\ds\q \les C_p \dbE^{\cF_t}\[  \sup\limits_{r\in[t,s]}| X_r |^  p+ {\( \int_t^s\int_E  | \g(r,e,X_{r-} ) | ^2   {\m}( \mathrm dr, \mathrm de) \)^\frac p2}\] \\
\ns\ds\q \les C_p \dbE^{\cF_t}\[  \sup\limits_{r\in[t,s]}| X_r |^  p +{\( \int_t^s\int_E \big( \ell_\g(e)^2|X_{r-}|^2 + | \g(r,e,0 ) | ^2 \big) {\m}( \mathrm dr, \mathrm de)\ran\)^\frac p2}\]\\
\ns\ds\q  \les C_p \dbE^{\cF_t}\[  \sup\limits_{r\in[t,s]}| X_r |^  p \]+C_p (a_s+b_s) .

% %
\ea$$
}
By combining the above estimates,
 we get
$$\ba{ll}
 \ds\dbE^{\cF_t}\[  \(    \int_t^s  |X_r|^2\mathrm dr \)^\frac p2\]
\les \frac 12 \dbE^{\cF_t}\[\(\int_t^s    |  X_r  |^2    \mathrm dr\)^\frac p2\] \\
 \ns\ds\hskip 4cm+C_p\dbE^{\cF_t}\[ \L_s      +  \int_t^s  |\g(r,\cd,0 ) |_{\l,p}^ p     \mathrm dr   + \sup\limits_{r\in[t,s]} |X_r|^p   +   \int_t^s  | X_r |^p \mathrm dr \] .
% %
\ea$$
%
%Then, we get
%$$\ba{ll}
%\ns\ds\dbE^{\cF_t}\[  \(    \int_t^s  |X_r|^2\mathrm dr \)^\frac p2\]\les C_p\dbE^{\cF_t}\[\L_s  + \int_t^s  \int_E  | \g(r,e,0) |^p  \l(\mathrm de)\mathrm drI_{\{p>2\}}+ \int_t^s\(\int_E   |\g(r,e,0 ) |^\frac p2    \l ( \mathrm de)\)^2\mathrm dr   I_{\{p>4\}}     \\
%  %
%  \ns\ds\q + \sup\limits_{r\in[t,s]} |X_r|^p    +\[I_{\{p>4\}}+ \int_El_\g(e)^2  \l(\mathrm de)   +  \(\int_E   |l_\g(e) |^\frac p2    \l ( \mathrm de)\)^2I_{\{p>4\}}  \] \int_t^s  | X_r |^p \mathrm dr .
%% %
%\ea$$
Finally, letting $s\to\i$, from the latter estimate and    \eqref{SDE-est-1}-(i), (iii)   we get \eqref{SDE-est-2}.
%
%\bel{X-2-p}\ba{ll}
%\ns\ds\dbE^{\cF_t}\[  \(    \int_t^\i  |X_r|^2\mathrm dr \)^\frac p2\]
% \les  C_p\dbE^{\cF_t}\[ |\xi|^p           + \(\int_t^\i    \big(|b(r,0)|^2+  |\si(r,0)  |^2+  |\g(r,\cd,0 )|^2_{\l,2}\big)      \mathrm dr \)^\frac p2 \\
% %
% \ns\ds\hskip3cm    + \int_t^\i  \( | b(r,0)|^p +  |  \si(r,0)  |^p   +|\g(r,\cd,0) |^p_{\l,p}+| \g(r,e,0) |^2_{\l,2}   \)\mathrm dr\], \ \dbP\mbox{-a.s.}
%
%\ea\ee

\end{proof}

From  the Propositions  \ref{LemmaA-1} and  \ref{LemmaA-1.2}, we also get the following  continuous  dependence of the solutions of the infinite horizon  SDEs with  jumps \eqref{SDE}.

 \bc\label{LemmaA-2}\sl  Denote  by  $\bar X$ the solution to \eqref{SDE} corresponding to another initial state $\bar \xi\in L_{\cF_t}^p(\Omega;\dbR^n)$ and coefficients $\bar b,\bar \si,\bar \g$.

 {\rm(i)} Assume the conditions of Proposition \ref{LemmaA-1}  hold.  If the coefficients  $\bar b,\bar \si,\bar \g$
     satisfy   {\bf (A1)}-{\bf (A3)} and  $\eta_{\bar b,p}>0$,   then we have, $\dbP$-a.s.
$$\ba{ll}
 \ds  \dbE^{\cF_t} \[\sup\limits_{s\in[t,\i)}   |X_r -\bar X_r|^{p}+ \int_t^\i  |X_r-\bar X_r|^p\mathrm dr \]\les C_p |\xi-\bar \xi|^p  \\
\ns\ds \q  +    C_p \dbE ^{\cF_t} \[ \int_t^\i   \big(|b(r,\bar X_r)  - \bar b(r,\bar X_r)|^p+  |\si(r,\bar X_r) - \bar \si(r,\bar X_r) |^p + |\g(r,\cd,\bar X_r)  - \bar \g(r,\cd,\bar X_r)|^p_{\l,p}\big)   \mathrm dr\].
   %.
\ea$$

{\rm(ii)} Under  the conditions of Proposition \ref{LemmaA-1.2}, if the coefficients  $\bar b,\bar \si,\bar \g$ satisfy   {\bf (A1)}-{\bf (A4)} and  $\eta_{\bar b,p}>0$, then,
$$
  \ba{ll}
  \ds \dbE^{\cF_t}\[\(\int_t^\i   | X_r -\bar X_r |^2\mathrm dr\)^\frac p2 \]\les C_p  |\xi-\bar \xi |^p  \\
  \ns\ds   \  +  C_p  \dbE^{\cF_t} \[\(\!\int_t^\i\!   \big( | b(r,\bar X_r) - \bar b(r,\bar X_r)|^2 +  |  \si(r,\bar X_r)  - \bar \si(r,\bar X_r) |^2   +|\g(r,\cd,\bar X_r)  - \bar \g(r,\cd,\bar X_r)|^2_{\l,2}  \big)    \mathrm dr\)^\frac p2\]\\
 \ns\ds   \ +  C_p  \dbE^{\cF_t} \[ \int_t^\i   \big( | b(r,\bar X_r) - \bar b(r,\bar X_r)|^p +  |  \si(r,\bar X_r) - \bar \si(r,\bar X_r)  |^p  +|\g(r,\cd,\bar X_r)  - \bar \g(r,\cd,\bar X_r)|^p_{\l,p} \big) \mathrm dr \]   ,\ \dbP\mbox{-a.s.}
 \ea
$$

\ec

\begin{proof}
Setting $\widehat X:=X-\bar X$, let us consider the following infinite horizon SDE  with jumps
$$
 \left\{\ba{ll}
 \ds\!\!\!\! \mathrm  d\widehat X_s=\widehat b(s, \widehat X_s)\mathrm ds+\widehat \si(s, \widehat X_s) \mathrm dB_s+\int_E\widehat \g(s,e, \widehat X_{s-})\ti{\m}(\mathrm ds,\mathrm de),\ s\ges t,\\
 \ds\!\!\!\! \widehat X_t=\widehat \xi,
 \ea\right.$$
where  $\widehat \xi:=\xi-\bar\xi$, $\widehat b(s,x):=b(s,x+\bar X_s)-\bar b(s,\bar X_s) $, $\widehat \si(s,x):=\si(s,x+\bar X_s)-\bar \si(s,\bar X_s) $ and  $\widehat \g(s,e,x):=\g(s,e,x+\bar X_{s-})-\bar \g(s,e, \bar X_{s-}) $, $(\o,s,x,e)\in\Omega\times [0,\i)\times\dbR^n\times E$.
Then, by applying the Propositions   \ref{LemmaA-1} and   \ref{LemmaA-1.2} to the above SDE, we  get the both stated results.

\end{proof}

 %
%For simplified, we set $$\Th\!=\!\Big\{\th=(\th_1,\th_2)\in L_\dbF^2(0,\i;\dbR^d)\times \cK^u_\dbF^2(0,\i; \dbR)\mid  |\th_1(s)|\les L_1,\ \!  \|\th_2(s,e)\|\les L_2 ,\ \!   s\ges0, \ \!  e\in E,\ \!  \mathrm ds\mathrm d\dbP\mbox{-a.e.}\Big\}.$$
%
%
%For any $t\ges 0$   and      $\th(\cd)=(\th_1(\cd),\th_2(\cd,\cd))\in \Th$,    we define  $\G_s^{t,\th}=\G_s^{t,1}\cd\G_s^{t,2}$ with
%%
%$$\ba{ll}
%\ns\ds \G_s^{t,1}:=\exp\(\int_t^s \th_1(r)\mathrm \mathrm dB_r-\frac12\int_t^s|\th_1(r)|^2\mathrm dr\),\\
%%
%\ns\ds \G_s^{t,2}:= \prod\limits_{t<r\les s}\Big(1+\int_E\th_2(r,e)\mu(\{r\},\mathrm de) \Big)\exp\Big\{-\int_t^s\int_E\th_2(r,e) \lambda(\mathrm{d}e)\mathrm dr\Big\}.
%\ea$$
%%
%  Obviously,
%$$\left\{\ba{ll}
%\ns\ds\!\!\!  \mathrm d\G_s^{t,\th}=\G_s^{t,\th}\(\th_1(s)\mathrm dB_s+\int_E\th_2(e,s) \ti{\mu}(\mathrm ds,\mathrm de)\),\q s\ges t,\\
%%
%\ns\ds\!\!\! \G_t^{t,\th}=1.
%\ea\right.$$
%%
%For any $s\ges t$, we set  $\ds W^\G_s:=B_s-\int_t^s \th_1(r) \mathrm dr,$ $\ds \ti{\mu}^\G(\mathrm ds,\mathrm de):=\ti{\mu}(\mathrm ds,\mathrm de)- \th_2(s,e) \l(\mathrm{d}e)\mathrm ds $. By Girsanov Theorem, we know $W^\G_\cd$ is $\dbF$-Brownian motion,  $\ds \ti{\mu}^\G(\cd,\cd) $ is  $\dbF$-Poisson martingale measure with the new compensator $(1+\th_2(s,e))\l(\mathrm{d}e)\mathrm ds$ under the new probability measure $\dbP^\G$ with
%$ \frac{\mathrm d\dbP^\G}{\mathrm d\dbP}\big|_{\cF_s}=\G_s^{t,\th}$, $s\ges t$.
%The expectation under the new probability $P^\G$ is denoted by $\dbE^\G[\cd]$.

\subsection{Infinite horizon backward stochastic differential equations with jumps}
In this subsection, we consider the following infinite horizon   BSDE with jumps,
 \bel{BSDE}
\ba{ll}
 \ds\!\!\!  Y _s\!=\!Y_T+\int_s^Tf\big(r,X _r,Y _r,Z _r,  \int_E\psi( r,e,K _r(e))  \l(\mathrm{d}e) \big)\mathrm dr-\int_s^TZ _r\mathrm dB_r\\
 \ds\qq\qq\qq  -\int_s^T\int_EK _r(e)\ti{\m}(\mathrm dr,\mathrm de),\ s\in[t,T],  \mbox{ for all }    0\les t  \les T<\i,
 \ea\ee
 where $X$ satisfies  SDE \eqref{SDE}.
For $p\ges 2,$ we assume   the driver  $f:\Omega\times[0,\i)\times\dbR^n\times \dbR^m\times \dbR^{m\times d}\times \dbR^m\to\dbR^m$ and the function $\psi:\Omega\times[0,\i) \times E \times \dbR^m\ \to \dbR^m$  to satisfy

 \ss

  {\bf (B1)}
  (i)  There exist  nonnegative  constants $L_x, L_y, L_z, L_ k$ such that, for all $s\ges 0$, $(x,y,z,k)$, $(x',y',z',k')\in \dbR^n\times\dbR^m\times\dbR^{m\times d}\times   \dbR^m  $,
   $$ |f(s,x,y,z,k)-f(s,x',y',z',k')| \les L_x|x-x'|+L_y  |y-y'| +L_z |z-z'|+L_k|k-k'|, \ \dbP\mbox{-a.s.};$$
 \hskip 1.5cm (ii) $f(\cd,0,0,0,0)\in L_\dbF^p(\Omega;L^2(0,\i;\dbR^m))$.

 {\bf (B2)} There exists a constant $\eta_f>0$  such that, for all $s\ges 0$, $(x,z,k)\in \dbR^n\times\dbR^{m\times d}\times   \dbR^m  $, $y, y'\in\dbR^m$,
  $$\ds\lan f(s,x,y,z,k )-f(s,x,y',z,k ),y-y'\ran \les -\eta_f|y-y'|^2.$$

 {\bf (B3)}   There exists a mapping  $\rho(\cd)\in   L_\l^2(E;\dbR^+) $ such that, for all $s\ges 0$, $k,k'\in \dbR^m$, $e\in E$,   $\dbP$-a.s.,
$$|\psi(s,e,k)-\psi(s,e,k')|\les \rho(e)|k-k'| \mbox{ and }  \dbE\[ \int_0^\i|\psi(r,\cd,0) |^2_{\l,1}\,\mathrm dr\]<\i  .$$

%Firstly, we study the a priori estimate of BSDE \eqref{BSDE}.
%
%\bl{}\sl
%Let $(Y^i,Z^i,K^i(\cd))\in\sS^2(t,\i)$ be the solutions of \eqref{BSDE} associated with the drivers $f_i$ satisfying (A2), i=1,2.
%Then, for any $T>t$, we have
%
%\el
%
%\begin{proof}
%Let $(\hat Y,\hat Z,\hat K(\cd))=(Y^1-Y^2,Z^1-Z^2,K^1(\cd)-K^2(\cd))$, $\hat f(x,y,z,k)=(f^1-f^2)(x,y,z,k)$. Then
% \bel{BSDE1-2}
%\ba{ll}
% \ns\ds\!\!\! d\hat Y_s=-\(\hat f(X^{t,\xi}_s,Y^1_s,Z^1_s,K^1_s(\cd))+f^2(X^{t,\xi}_s,Y^1_s,Z^1_s,K^1_s(\cd))-f^2(X^{t,\xi}_s,Y^2_s,Z^2_s,K^2_s(\cd))\)\mathrm ds+\hat Z_s\mathrm dB_s+\int_E\hat K_s(e)\ti{\m}(\mathrm ds,\mathrm de),\ s\ges t.
% \ea\ee
%
%
%\end{proof}
Let us state the following  result, which is similar to  Corollary 2.3 in \cite{Yu-2017}.
\bl\label{Yto0} \sl Let the conditions {\bf (A1)}, {\bf (A2)} and {\bf (B1)}  hold  with $p=2$. If $(Y ,Z ,K )\in \sS^2_\dbF(  t,\i)  $ is the unique solution of BSDE with jumps  \eqref{BSDE}, then $\lim\limits_{s\to \i}\dbE^{\cF_t}[|Y(s)|^2]=0$, $\dbP$-a.s.

\el

Next,  we    study an a prior estimate of the infinite horizon  BSDE with jumps \eqref{BSDE}.

\bp\label{Lemma-BSDE-est-2}\sl
Given any $p\ges 2$, assume the conditions of Proposition \ref{LemmaA-1.2},  {\bf (B1)}-{\bf (B3)}   and  $\bar\eta_f:= \eta_f -\frac{L_z ^2+L_k^2|\rho(\cd)|_{\l,2}^2}{2}>0$. Let $(Y ,Z ,K )\in \sS^p_\dbF(t,\i)$   be a solution to  BSDE \eqref{BSDE}. Then, we have, $\dbP$-a.s.
 \bel{Y-p-sup-1}\ba{ll}
%%p
%\ns\ds  \dbE^{\cF_t}\[\!\sup\limits_{s\in[t,\i)} |Y_s |^p\! \] \les C_p\(\!| \xi |^p +   \dbE ^{\cF_t}  \[\! \(\!\int_t^\i\!\!\big(| b(r,0)|^2 +  |  \si(r,0)  |^2   +|\g(r,\cd,0) |^2_{\l,2}  +|f(r,0,0,0,0 )|^2\big)\mathrm dr \!\)^\frac p2\]\!\),\!\!\!\!\!\!\!\!\!\!\!\!
%\ea\ee
%%p
%and
%\bel{Y-2-1}\ba{ll}
 \ds \! \dbE^{\cF_t} \[\sup\limits_{s\in[t,\i)} |Y_s |^p+\(\int_t^\i \!|Y _r|^2\mathrm dr \)^\frac p2+\(\int_t^\i \!|Z _r|^2\mathrm dr \)^\frac p2 +\(\int_t^\i    |K_r(\cd)| ^2_{  \l,2}  \mathrm dr    \)^\frac p2 \]  \\
\ns\ds \les C_p   \Big\{ | \xi |^p  +\dbE^{\cF_t} \[     \(\int_t^\i\!\big(| b(r,0)|^2 +  |  \si(r,0)  |^2   +|\g(r,\cd,0) |^2_{\l,2} +|\psi(r,\cd,0) |^2_{\l,1}  +|f(r,0,0,0,0 )|^2\big)\mathrm dr \)^\frac p2 \]  \!\!\!\! \!\! \\
\ns\ds\qq \q+   \dbE^{\cF_t} \[  \int_t^\i\!   \big( | b(r,0)|^p +  |  \si(r,0)  |^p   +|\g(r,\cd,0) |^p_{\l,p}   \big)\mathrm dr       \]  \Big\}  .
\ea\ee
\ep

\begin{proof}
Note that, when $p\ges 2$, for  any process,  $\f(\cd) \in L_\dbF^p(\Omega;L^2(0,\i;\dbH))  $ implies $\f(\cd) \in L_\dbF^2(0,\i;\dbH) $.  Therefore, under the assumed conditions with  $p\ges2$,  the following  conclusion can be done for $p=2$.  In particular, we always have  $\lim\limits_{s\to \i}\dbE^{\cF_t}[|Y_s|^2]=0$,   $\dbP$-a.s.

For any $T>t$, by applying It\^o's formula to  $ |Y_\cd |^2 $ on $[t,T]$,  for all $s\in[t,T]$, we have
%\bel{Y-2}\ba{ll}
%%
%   \ns\ds |Y_s|^2  = |Y_t |^2 +   \int_t^s  |Z _r|^2 \mathrm dr +\int_t^s \int_E|K_r(\cd)| ^2 \m(\mathrm dr,\mathrm d e ) -2 \int_t^s   \lan  Y_r, f\big(r,X _r,Y _r,Z _r,\int_E K _r(e)\rho( e)\l(\mathrm{d}e)\big) \ran\mathrm  dr \\
%%
%\ns\ds\qq\q+2\int_t^s   \lan Y_r,Z_r\mathrm dB_r\ran  + 2\int_t^s \int_E\lan Y_r,K_r(e) \ti{\m}(\mathrm dr,\mathrm de)\ran .
%%
%\ea\ee
$$\ba{ll}
 \ds |Y_s |^2 +   \int_s^T  |Z _r|^2 \mathrm dr +\int_s^T \int_E|K_r(e)| ^2 \m(\mathrm dr,\mathrm d e )\\
   \ns\ds=|Y_T|^2-2\int_s^T  \lan Y_r,Z_r\mathrm dB_r\ran  - 2\int_s^T \int_E\lan Y_{r-},K_r(e) \ti{\m}(\mathrm dr,\mathrm de)\ran \\
\ns\ds\  +2 \int_s^T  \lan  Y_r, f\big(r,X _r,Y _r,Z _r,\int_E \psi(r,K _r(e),e)\l(\mathrm{d}e)\big) \ran\mathrm  dr \\
\ns\ds \les |Y_T |^2-2\int_s^T  \lan Y_r,Z_r\mathrm dB_r\ran  - 2\int_s^T \int_E\lan Y_{r-},K_r(e) \ti{\m}(\mathrm dr,\mathrm de)\ran-2\eta_f \int_s^T|Y_r |^{2 }\mathrm dr\\
\ns\ds\  +2 \int_s^T \!|Y_r |  \big(L_x|X_r|+L_z |Z_r|+L_k\int_E |\psi(r,e,0)|\l(\mathrm{d}e) +L_k\int_E |K_r(e) |\rho(e)\l(\mathrm{d}e)  +|f(r,0,0,0,0)| \big) \mathrm dr  \\
\ns\ds \les|Y_T |^2-2\int_s^T  \lan Y_r,Z_r\mathrm dB_r\ran  - 2\int_s^T \!\!\int_E\lan Y_{r-},K_r(e) \ti{\m}(\mathrm dr,\mathrm de)\ran\\
\ns\ds\    - \big( 2\eta_f - L_z ^2-L_k^2|\rho(\cd)|_{\l,2}^2 -8\e\big)\int_s^T |Y_r|^2  \mathrm dr  +  \frac{1 }{2\e}  \int_s^T\big(L_x^2|X_r |^{2 }+L_k^2  |\psi(r,\cd,0)|_{\l,1}^2+ |f(r,0,0,0,0 )|^2\big)\mathrm dr  \\
\ns\ds\   + \frac { L_z ^2}{  L_z ^2+\e }\int_s^T |Z _r|^2\mathrm dr+ \frac{  L_k^2|\rho(\cd)|_{\l,2}^2}{ L_k^2|\rho(\cd)|_{\l,2}^2+\e  }\int_s^T |K _r(\cd)|_{\l,2}^2\mathrm dr  ,\q \dbP\mbox{-a.s.}
\ea$$
Then, setting $\bar\eta_f:= \eta_f -\frac{L_z ^2+L_k^2|\rho(\cd)|_{\l,2}^2}{2} $, we have
\bel{Y-2-12}\ba{ll}
 \ds |Y_s |^2 +2 \big( \bar\eta_f -4\e\big)\int_s^T |Y_r|^2  \mathrm dr+  \frac { \e}{  L_z ^2+\e }\int_s^T |Z _r|^2\mathrm dr + \int_s^T \int_E|K_r(e)| ^2 \ti\m(\mathrm dr,\mathrm d e )\\
   \ns\ds  +\frac{  \e}{ L_k^2|\rho(\cd)|_{\l,2}^2+\e  }\int_s^T |K _r(\cd)|_{\l,2}^2\mathrm dr    \\
\ns\ds \les|Y_T |^2      +  \frac{1 }{2\e}  \int_s^T\!\!\big(L_x^2|X_r |^{2 }+L_k^2|\psi(r,\cd,0)|_{\l,1}^2+ |f(r,0,0,0,0 )|^2\big)\mathrm dr \\
 \ns\ds\q -2\int_s^T  \lan Y_r,Z_r\mathrm dB_r\ran  - 2\int_s^T  \int_E \lan Y_{r-},K_r(e) \ti{\m}(\mathrm dr,\mathrm de)\ran.
\ea\ee

Thanks to   $\bar\eta_f >0$, by taking $\e>0$ such that $\bar\eta_f   -4\e>0$,  we have, for all $s<T$,   $\dbP$-a.s.
\bel{Y-2-11}\ba{ll}
 \ds |Y_s |^2 + \int_s^T \big(|Y_r|^2    +    |Z _r|^2 +|K _r(\cd)|_{\l,2}^2\big)  \mathrm dr   + \int_s^T \int_E|K_r(e)| ^2 \ti\m(\mathrm dr,\mathrm d e )   \\
\ns\ds \les C\(|Y_T |^2      +    \int_s^T\!\!\big( |X_r |^{2 }+|\psi(r,\cd,0)|_{\l,1}^2+ |f(r,0,0,0,0 )|^2\big)\mathrm dr \\
 \ns\ds\qq- \int_s^T  \lan Y_r,Z_r\mathrm dB_r\ran  -  \int_s^T   \int_E \lan Y_{r-},K_r(e) \ti{\m}(\mathrm dr,\mathrm de)\ran\).
\ea\ee
Thus,  for all $\t\in [t, s],$ $\dbP$-a.s.,
\bel{Y-2-2}\ba{ll}
 \ds \dbE^{\cF_\t}\[ |Y_s |^2 +    \int_s^T\big( |Y_r|^2  +|Z _r|^2 + |K_r(\cd)|_{\l,2}^2\big) \mathrm dr   \] \\
\ns\ds \les  C\dbE ^{\cF_\t}\!\[ |Y_T |^2 +   \int_s^T\big( |X_r |^{ 2} + |\psi(r,\cd,0)|_{\l,1}^2+  |f(r,0,0,0,0 )|^2\big)\mathrm dr \]  .\!\!
\ea\ee

Moreover, from the integral form of the BSDE with jumps \eqref{BSDE}, using   \eqref{Y-2-2}, for any $T>\t$, we get
\bel{Y-2-2-1}\ba{ll}
   \ds\!\!\dbE^{\cF_\t}\[ \sup\limits_{s\in[\t,T]}|Y_s|^2\]\\
    \ns\ds \les C\dbE^{\cF_\t}\[ |Y_T |^2 + \int_\t^T  \big(  |f\big(r,0,0,0,0)  |^2+|\psi(r,\cd,0)|_{\l,1}^2+ |X _r|^2+|Y _r|^2+|Z _r|^2+ |K _r(\cd) |_{\l,2}^2 \big)\mathrm dr  \\
  \ns\ds \les  C\dbE ^{\cF_\t}\[ |Y_T |^2 +   \int_\t^T\big( |X_r |^{ 2}  + |\psi(r,\cd,0)|_{\l,1}^2+|f(r,0,0,0,0 )|^2\big)\mathrm dr \] ,\q \dbP\mbox{-a.s.}
\ea\ee
Therefore, letting $T\to\i$ in \eqref{Y-2-2} and \eqref{Y-2-2-1}, using \eqref{SDE-est-1}  and Lemma \ref{Yto0}, we get
$$\ba{ll}
 \ds\dbE^{\cF_\t}\[ \sup\limits_{s\in[\t,\i]}|Y_s|^2 + \int_\t^\i\big( |Y_r|^2  +|Z _r|^2 + |K_r(\cd)|_{\l,2}^2\big) \mathrm dr  \]  \\
     \ns\ds \les C \dbE^{\cF_\t}\[ |X_\t|^2+ \int_\t^\i\!\big(| b(r,0)|^2 +  |  \si(r,0)  |^2   +|\g(r,\cd,0) |^2_{\l,2}+    |\psi(r,\cd,0)|_{\l,1}^2+|f (r,0,0,0,0) |^2 \big)\mathrm dr   \],\  \dbP\mbox{-a.s.}
\ea$$
Hence \eqref{Y-p-sup-1}  for  $p=2$ holds true.

%
%Moreover, from the integral form of BSDE with jumps \eqref{BSDE}, using   \eqref{Y-2-2}, for any $T>t$, we get
%%
% %
%\bel{Y-2-2-1}\ba{ll}
%%
%   \ns\ds\!\!\dbE^{\cF_\t}\[ \sup\limits_{s\in[t,T]}|Y_s|^2\] \les C\dbE^{\cF_\t}\[ |Y_T |^2 + \int_t^T  \(  |f\big(r,0,0,0,0)  |^2+ |X _r|^2+|Y _r|^2+|Z _r|^2+ |K _r(\cd) |_{\l,2}^2 \)\mathrm dr  \\
%%%
%  \ns\ds \les  C\dbE ^{\cF_\t}\[ |Y_T |^2 +   \int_t^T\big( |X_r |^{ 2}  +|f(r,0,0,0,0 )|^2\big)\mathrm dr \] ,\q \dbP\mbox{-a.s.}
%\ea\ee
%%
%%
%Therefore, letting $T\to\i$, using \eqref{SDE-est-1}  and Lemma \ref{Yto0}, we get
%%
%\bel{Y-2-2-12}\ba{ll}
%%
%   \ns\ds\dbE^{\cF_\t}\[ \sup\limits_{s\in[t,\i]}|Y_s|^2 + \int_t^\i\big( |Y_r|^2  +|Z _r|^2 + |K_r(\cd)|_{\l,2}^2\big) \mathrm dr  \]  \\
%     %
%     \ns\ds \les C\(|\xi|^2+ \dbE^{\cF_\t}\[  \int_t^\i(| b(r,0)|^2 +  |  \si(r,0)  |^2   +|\g(r,\cd,0) |^2_{\l,2}    +|f (r,0,0,0,0) |^2 )\mathrm dr   \]\),\q \dbP\mbox{-a.s.}
%%
%\ea\ee
%Setting $\t=t$ in the above,  \eqref{Y-p-sup-1} and \eqref{Y-2-1} with  $p=2$ are derived.

Next, taking $\t=s$   in the latter estimate,   we get, for all  $0\les t\les s<\i$,
\bel{Y-2}   |Y_s |^2   \les  C\(|X_s |^2 +\dbE ^{\cF_s}\[   \int_t^\i\big(| b(r,0)|^2 +  |  \si(r,0)  |^2   +|\g(r,\cd,0) |^2_{\l,2}  +|\psi(r,\cd,0)|_{\l,1}^2+|f(r,0,0,0,0 )|^2\big)\mathrm dr \]\).
 \ee
By setting $\ds  M_s:=\dbE ^{\cF_s}\[  \int_t^\i\big(| b(r,0)|^2 +  |  \si(r,0)  |^2   +|\g(r,\cd,0) |^2_{\l,2}  + |\psi(r,\cd,0)|_{\l,1}^2 +|f(r,0,0,0,0 )|^2\big)\mathrm dr \]$, $s\ges t $,
and using Doob's inequality for $p>2$, we get
 \bel{Y-p-sup}\ba{ll}
\ds  \dbE^{\cF_t}\[\sup\limits_{s\in[t,\i)} |Y_s |^p \]=  \dbE^{\cF_t}\[\(\sup\limits_{s\in[t,\i)} |Y_s |^2 \)^\frac p2\]  \les  C_p\dbE ^{\cF_t} \[ \sup\limits_{s\in[t,\i)} |X_s| ^p\] +C_p\dbE ^{\cF_t} \[\(\sup\limits_{s\in[t,\i)} |M_s|\)^\frac p2\] \\
\ns\ds \les C_p\dbE ^{\cF_t} \[ \sup\limits_{s\in[t,\i)} |X_s| ^p\] +  C_p\(\frac{p}{p-2}\)^\frac p2\dbE ^{\cF_t} \big[   |M_\i|^\frac p2\big] \\
\ns\ds  \les  C_p\dbE ^{\cF_t} \[ \sup\limits_{s\in[t,\i)} |X_s| ^p\]\\
 \ns\ds \ +C_p\dbE ^{\cF_t} \[ \dbE ^{\cF_\i}\!\[ \!  \(\!\int_t^\i\!\!\big(| b(r,0)|^2 +  |  \si(r,0)  |^2   +|\g(r,\cd,0) |^2_{\l,2}+  |\psi(r,\cd,0)|_{\l,1}^2+|f(r,0,0,0,0 )|^2\big)\mathrm dr \)^\frac p2\]\]  \!\!\!\!\!\!\!\!\!\!\!\!\!\!\! \!\! \\
\ns\ds  \les  C_p\Big\{ |\xi |^p     + \dbE^{\cF_t} \[\int_t^\i  \big( | b(r,0)|^p +  |  \si(r,0)  |^p   +|\g(r,\cd,0) |^p_{\l,p}  \big)    \mathrm dr \] \!\!\! \\
 \ns\ds \qq\q+ \dbE ^{\cF_t}  \[   \(\int_t^\i\big(| b(r,0)|^2 +  |  \si(r,0)  |^2   +|\g(r,\cd,0) |^2_{\l,2} + |\psi(r,\cd,0)|_{\l,1}^2 +|f(r,0,0,0,0 )|^2\big)\mathrm dr \)^\frac p2\]\Big\}.\!\!\!
\ea\ee
%which is indeed \eqref{Y-p-sup-1} with   $p>2$.
%
%Due to \eqref{Y-2-11}, we have
%%
%\bel{Y-2-111}\ba{ll}
%%
%   \ns\ds \( \int_s^T |Y_r|^2  \mathrm dr\)^\frac p2+  \( \int_s^T |Z _r|^2\mathrm dr \)^\frac p2+\frac12\(\frac 2p\)^\frac p2 \(  \int_s^T \int_E|K_r(e)| ^2  \l(\mathrm de)\mathrm dr ) \)^\frac p2 +\frac12  \(  \int_s^T \int_E|K_r(e)| ^\frac p2\m(\mathrm dr,\mathrm d ep) \)^2   \\
%   %
%   \ns\ds \les \( \int_s^T |Y_r|^2  \mathrm dr\)^\frac p2+  \( \int_s^T |Z _r|^2\mathrm dr \)^\frac p2+ \( \int_s^T \int_E|K_r(e)| ^2   \m(\mathrm dr,\mathrm d e )\)  ^\frac p2     \\
%   %
%   \ns\ds \les \( \int_s^T |Y_r|^2  \mathrm dr+   \int_s^T |Z _r|^2\mathrm dr + \int_s^T \int_E|K_r(e)| ^2  \ti\m(\mathrm dr,\mathrm d e )+ \int_s^T \int_E|K_r(e)| ^2  \l(\mathrm de)\mathrm dr ) \)^\frac p2     \\
%   %
%\ns\ds \les C_p\(|Y_T |^2      +    \int_s^T\!\!\big( |X_r |^{2 }+ |f(r,0,0,0,0 )|^2\big)\mathrm dr  - \int_s^T \!\! \lan Y_r,Z_r\mathrm dB_r\ran  -  \int_s^T\!\! \int_E\!\!\lan Y_r,K_r(e) \ti{\m}(\mathrm dr,\mathrm de)\ran\)^\frac p2 p
%%
%\ea\ee

  Further,   from \eqref{Y-2-11}, by using  the  BDG inequality, we obtain 
$$\ba{ll}
  \ds  \dbE^{\cF_t} \[   \( \int_s^T |Y_r|^2  \mathrm dr\)^\frac p2+  \( \int_s^T |Z _r|^2\mathrm dr \)^\frac p2+ \( \int_s^T \int_E|K_r(e)| ^2   \m(\mathrm dr,\mathrm d e )\)  ^\frac p2  \]   \\
   %
%
%
 % \ns\ds  \les    \( \frac { \e}{  L_z ^2+\e }\)^\frac p2\dbE^{\cF_t} \[\(\int_s^T |Z _r|^2\mathrm dr \)^\frac p2\]+ \(\frac{ \e}{ L_k^2|\rho(\cd)|_{\l,2}^2+\e  }\)^\frac p2\dbE^{\cF_t} \[\(\int_t^s \int_E|K_r(e)| ^2 \ti\m(\mathrm dr,\mathrm d e )   \)^\frac p2\] \\
%
   \ns\ds \les \dbE^{\cF_t} \[  \(  \int_s^T |Y_r|^2  \mathrm dr+   \int_s^T |Z _r|^2\mathrm dr + \int_s^T \int_E|K_r(e)| ^2 \ti\m(\mathrm dr,\mathrm d e )  + \int_s^T |K _r(\cd)|_{\l,2}^2\mathrm dr  \)^\frac p2\]\\
\ns\ds  \les C_p\dbE^{\cF_t} \[  \(|Y_T |^2      +    \int_s^T\big( |X_r |^{2 }+|\psi(r,\cd,0)|_{\l,1}^2+ |f(r,0,0,0,0 )|^2\big)\mathrm dr \\
 \ns\ds \q - \int_s^T  \lan Y_r,Z_r\mathrm dB_r\ran  -  {\int_s^T \int_E\lan Y_{r-},K_r(e) \ti{\m}(\mathrm dr,\mathrm de)\ran \)^\frac p2}\]\\
\ns\ds  \les C_p \dbE^{\cF_t} \[  |Y_T |^p      +    \( \int_s^T\big( |X_r |^{2 }+|\psi(r,\cd,0)|_{\l,1}^2+ |f(r,0,0,0,0 )|^2\big)\mathrm dr \)^\frac p2   \\
\ns\ds \q+\(\int_s^T  |Y_r|^2\cd|Z_r|^2\mathrm dr\)^\frac p4     {+\(\int_s^T \int_E| Y_{r-}|^2\cd|K_r(e)|^2 \m(\mathrm dr,\mathrm de)\)^\frac p4  \]}  \\
\ns\ds  \les C_p \dbE^{\cF_t} \[  |Y_T |^p      +    \( \int_s^T\big( |X_r |^{2 }+|\psi(r,\cd,0)|_{\l,1}^2+ |f(r,0,0,0,0 )|^2\big)\mathrm dr \)^\frac p2\\
\ns\ds \q+\sup\limits_{r\in [t,T]}|Y_r|^{\frac p2} \cd\(\int_s^T   |Z_r|^2\mathrm dr\)^\frac p4   +\sup\limits_{r\in [t,T]}|Y_r|^{\frac p2}\cd \(\int_s^T \int_E|K_r(e)|^2 \m(\mathrm dr,\mathrm de)\)^\frac p4  \] \\
\ns\ds  \les C_{p,\epsilon} \dbE^{\cF_t} \[  \sup\limits_{r\in [t,T]}|Y_r|^{p}+     \( \int_s^T\big( |X_r |^{2 }+|\psi(r,\cd,0)|_{\l,1}^2+ |f(r,0,0,0,0 )|^2\big)\mathrm dr \)^\frac p2\]\\
\ns\ds\ \q +\epsilon \dbE^{\cF_t}\[ \(\int_s^T   |Z_r|^2\mathrm dr\)^\frac p2 +\(\int_s^T \int_E|K_r(e)|^2\m(\mathrm dr,\mathrm de)\)^\frac p2 \].
\ea$$
%
% where we have used
% $$\ba{ll}
% \ds  \dbE^{\cF_t}\[\(\int_s^T \int_E|Y_{r-}|^2\cd|K_r(e)|^2  \m (\mathrm dr,\mathrm de) \)^\frac p4\]= \dbE^{\cF_t}\[\(\sum\limits_{r\in D_p,r\in(s,T]}  |Y_{r-}|^2\cd|K_r(p(r))|^2   \)^\frac p4\]\\
% %
% \ns\ds \les   \dbE^{\cF_t}\[ \sup\limits_{r\in [s,T]}|Y_{r }|^\frac p2 \(\int_s^T  |K_r(e)|^2 \m(\mathrm dr,\mathrm de)   \)^\frac p4\].
% %
% \ea $$
%

Thus, by  choosing $\epsilon>0$ small enough, we get
$$\ba{ll}
  \ds  \dbE^{\cF_t} \[\(\int_s^T |Y _r|^2\mathrm dr \)^\frac p2+\(\int_s^T |Z _r|^2\mathrm dr \)^\frac p2 + \( \int_s^T \int_E|K_r(e)| ^2   \m(\mathrm dr,\mathrm d e )\)  ^\frac p2 \] \\
\ns\ds \les C_p \dbE^{\cF_t} \[  \sup\limits_{r\in [t,T]}|Y_r|^{p}+   \( \int_s^T\big( |X_r |^{2 }+|\psi(r,\cd,0)|_{\l,1}^2+ |f(r,0,0,0,0 )|^2\big)\mathrm dr \)^\frac p2\].
\ea$$
By combining the latter estimate   with  $$ \(\frac 2p\)^\frac p2\dbE^{\cF_t} \[   \(  \int_s^T \int_E|K_r(e)| ^2  \l(\mathrm de)\mathrm dr ) \)^\frac p2\] \les   \dbE^{\cF_t} \[\(\int_s^T \int_E|K_r(e)|^2\m(\mathrm dr,\mathrm de)\)^\frac p2 \]$$
(see, Lemma 3.1 in Li, Wei \cite{LW-SPA}), we get
$$\ba{ll}
  \ds  \dbE^{\cF_t} \[\(\int_t^T |Y _r|^2\mathrm dr \)^\frac p2+\(\int_t^T |Z _r|^2\mathrm dr \)^\frac p2 +\(\int_t^T  |K_r(\cd)| ^2_ {\l,2}  \mathrm d r    \)^\frac p2 \] \\
\ns\ds \les C_p \dbE^{\cF_t} \[  \sup\limits_{r\in [t,T]}|Y_r|^{p}+  \( \int_s^T\big( |X_r |^{2 }+|\psi(r,\cd,0)|_{\l,1}^2+ |f(r,0,0,0,0 )|^2\big)\mathrm dr \)^\frac p2\].
%
%\ns\ds \les C_p \( | \xi |^p + \dbE^{\cF_t} \[    \(\int_t^\i\big(| b(r,0)|^2 +  |  \si(r,0)  |^2   +|\g(r,\cd,0) |^2_{\l,2}  +|f(r,0,0,0,0 )|^2\big)\mathrm dr \)^\frac p2  \] \\
%%
%\ns\ds\qq\q +   \dbE^{\cF_t} \[  \int_t^\i   \big( | b(r,0)|^p +  |  \si(r,0)  |^p   +|\g(r,\cd,0) |^p_{\l,p}   +|\g(r,\cd,0) |^2_{\l,2} \big)   \mathrm dr\]  \)  .
%
\ea$$
By letting $T\to\i$, and using \eqref{SDE-est-2} and \eqref{Y-p-sup}, the stated \eqref{Y-p-sup-1} is proved.

 \end{proof}

%\br{}\sl
%For  \eqref{Y-2-2} , using   Gronwall's inequality, for any $s\in [t,T]$, we get
%%
%\bel{Y-2-3}\ba{ll}
%%
%\ns\ds \dbE  [  |Y_s |^2]\les C_{ \e} e^{- 2( \bar\eta_f -3\e)(T-s)}\dbE   \[ |Y_T |^2   +   \int_t^T\big(|X_r |^{2 }+ |f(r,0,0,0,0 )|^2\big)\mathrm dr \]\\
%%
%\ns\ds \les C_{ \e} e^{- 2( \bar\eta_f -3\e)(T-s)}\dbE   \[|Y_t |^ 2  +\int_t^T(|X_r |^{2} +|Y_r |^{2} +|f (r,0,0,0,0) |^2 )  \mathrm dr \\
%   %
%   \ns\ds\hskip 2.5cm +(T-t+1)   \int_t^T( |Z _r|^2+|K_r(\cd)|_{\l,2}^2) \mathrm dr    \],
%\ea\ee
%%
%%\bel{Y-p-3}\ba{ll}
%%%
%%\ns\ds \dbE  [  |Y_s |^p]\les  C_{p,\e} e^{- p\big(  \eta_f -\frac12 (L_z ^2+L_k^2)-3\e\big)(T-s)}\dbE   \[ |Y_T |^p   +   \int_t^\i\big(|X_r |^{p }   +    |f(r,0,0,0,0 )|^p\big)\mathrm dr \].
%%\ea\ee
%%%
%Further, letting $T\to\i$ in  \eqref{Y-2-3}, using $X\in L_\dbF^2(t,\i;\dbR^n)) $ and  $(Y ,Z ,K )\in \sS^2[t,\i)$,
%  %
%   we have $$\lim\limits_{s\to\i}\dbE|Y_s|^2=0.$$
%%
% \er

From the above result we  get the following  continuous dependence of the solution   of infinite horizon BSDEs with jumps  \eqref{BSDE}.

\bc\label{Y-Y}\sl Let the conditions of Corollary  \ref{LemmaA-2}-(ii) and Proposition \ref{Lemma-BSDE-est-2} hold.
  Denoting by $(\bar Y,\bar Z,\bar K)$  the solution to \eqref{BSDE} corresponding to  $\bar X$ and  another coefficients  $\bar f$, $\bar \psi$  satisfying {\bf (B1)}-{\bf (B3)},    $\bar\eta_{\bar f} >0$,   we have
%
%  $$\bap{ll}
%\ns\ds  \dbE^2\[| Y_s-\bar Y_s  |^2  \]\les e^{-(2\eta_f-\e )(T-s)}  \frac1\e\dbE\[ |Y_T-\bar Y_T |^2+ \int_s^\i  |f(s,X _s,0,0,0)-f(s,\bar X _s,0,0,0)|^2\mathrm dr \
%\ea$$
%and
%\bel{Y-p-sup-2p}\ba{ll}
%%
%\ns\ds  \dbE^{\cF_t}\[\sup\limits_{s\in[t,\i)} |Y_s -\bar Y_s |^p \] \\
%%
%\ns\ds \les C_p\dbE ^{\cF_t}  \[| \xi |^p +   \(\!\int_t^\i\!\!\big(|b(r,X_r)-\bar b(r,\bar X_r)|^2 +|  \si(r,X_r)  -\bar \si(r,\bar X_r)|^2  +|\g(r,\cd,X_r )-\bar \g(r,\cd,\bar X_r)|^2_{\l,2}\\
%    %
%    \ns\ds\qq\qq\qq +|f(r,\bar X_r,\bar Y_r,\bar Z_r,\int_E\bar K_r(e)\rho(e)\l(\mathrm de))-\bar f(r,\bar X_r,\bar Y_r,\bar Z_r,\int_E\bar K_r(e)\rho(e)\l(\mathrm de))|^2\big)\mathrm dr \!\)^\frac p2\],\!\!\!\!\!\!\!\!\!\!\!
%\ea\ee
%%
%and
$$
\ba{ll}
 \ds    \dbE^{\cF_t}\[\sup\limits_{s\in[t,\i)} |Y_s -\bar Y_s |^p+\(\int_t^\i|Y _r-\bar Y_r|^2\mathrm d r \)^\frac p2 +\(\int_t^\i|Z _r-\bar Z_r|^2\mathrm d r \)^\frac p2  + \(\int_t^\i |K _r(\cd)-\bar K_r(\cd)|^2_{\l,2} \mathrm dr  \)^\frac p2 \] \\
  \ns\ds \les C_{p} \Big\{ |\xi-\bar \xi|^p   +   \dbE^{\cF_t}  \[   \int_t^\i   \big( |b(r,X_r)-\bar b(r,\bar X_r)|^p +|  \si(r,X_r)  -\bar \si(r,\bar X_r)|^p  +|\g(r,\cd,X_r )-\bar \g(r,\cd,\bar X_r)|^p_{\l,p}   \big) \mathrm dr\]\\
    \ns\ds\qq + \dbE^{\cF_t}  \[\(\int_t^\i   \big( |b(r,X_r)-\bar b(r,\bar X_r)|^2 +|  \si(r,X_r)  -\bar \si(r,\bar X_r)|^2  +|\g(r,\cd,X_r )-\bar \g(r,\cd,\bar X_r)|^2_{\l,2}\\
    \ns\ds\qq\qq    +|f(r,\bar X_r,\bar Y_r,\bar Z_r,\int_E\psi(r,\bar K_r(e),e)  \l(\mathrm de))-\bar f(r,\bar X_r,\bar Y_r,\bar Z_r,\int_E\bar\psi(r,\bar K_r(e),e) \l(\mathrm de))|^2\big) \mathrm dr\)^\frac p2\] \Big\} .
\ea$$
\ec

\bl\label{Lemma-BSDE-1}\sl
Assume {\bf (A1)}-{\bf (A4)}, {\bf (B1)}-{\bf (B3)}  with some  $p\ges 2$,  $\eta_{b,p}>0$ and $\bar\eta_{f}>0$. Then the   infinite horizon BSDE with jumps \eqref{BSDE} admits a unique solution $(Y,Z,K )\in \sS^p_\dbF(t,\i)$.

\el

  This result can   be proved by   employing the method used in  \cite{Peng-Shi-2000, WY-2021} and Corollary \ref{Y-Y}, so that we shall not repeat the details here.
Next we give the comparison theorem for  the infinite horizon BSDE with jumps \eqref{BSDE}.

\bt\label{Th-Com}\sl(Comparison Theorem)  Let $m=1$, $p\ges 2$. Denote by  $(Y^1,Z^1,K^1)$, $(Y^2,Z^2,K^2)\in \sS^p_\dbF(t,\i)$  the solution  of \eqref{BSDE} corresponding to the driver  $f^1$, $f^2$ with the same $\psi$, respectively, which are supposed to satisfy  {\bf (B1)}-{\bf (B3)}. Assume  {$\bar\eta_{f^1} :=  \eta_{f^1} -\frac{L_z ^2+L_k^2|\rho(\cd)|_{\l,2}^2}{2}>0$}    and

{\bf (S)} there exists a constant $\varrho>0$ such that, for all $(\o, s,x,y,z)\in \Omega\times [0,\i)\times  \dbR^n\times\dbR\times \dbR^d$, $k,k'\in \dbR$,  $k\neq k'$ and $e\in E$,
 $$\ba{ll}
\ds 0\les \frac{\psi(s,k,e)-\psi(s,k',e)}{k-k'} \les \varrho(1\wedge|e|) ,\\
\ns\ds\frac{f^1(s,x,y,z,k)-f^1(s,x,y,z,k')}{k-k'}>-\frac 1\varrho ,\ \dbP\mbox{-a.s.}  \ea$$
If for all $(\o,s,x,y,z,k)\in \Omega\times [0,\i) \times\dbR^n\times\dbR\times \dbR^d\times \dbR$, $f^1(s,x,y,z,k)\les f^2(s,x,y,z,k)$, $\dbP$-a.s.,   then $Y^1_t\les Y^2_t $, $t\ges0$, $\dbP$-a.s.

\et

\begin{proof}
 Setting $(\hat Y,\hat Z,\hat K):=(Y^1 -  Y^2 ,Z^1 - Z^2 ,K^1 -K^2 )$, we have
$$
\ba{ll}
 \ds \mathrm  d\hat Y_s=-\(f^1\big(s,X _s,Y _s^1,Z _s^1,\int_E \psi(r,K _s^1(e) , e)\l(\mathrm{d}e)\big)-f^2\big(s,X _s, Y_s^2, Z _s^2,\int_E \psi(r,K _s^2(e), e)\l(\mathrm{d}e)\big)\)\mathrm ds\\
 \ns\ds\qq\q +\hat Z_s\mathrm dB_s+\int_E\hat K_s(e)\ti{\m}(\mathrm ds,\mathrm de)\\
 \ns\ds =-\( I_s^{y}  \hat Y_s+I_s^{z} \hat Z_s  +I_s^{k} \int_E \big(\psi(s,K _s^1(e) , e)-\psi(s,K _s^2(e) , e) \big)\l(\mathrm{d}e)   +\hat f_s
 \)\mathrm ds+\hat Z_s\mathrm dB_s+\int_E\hat K_s(e)\ti{\m}(\mathrm ds,\mathrm de)\\
 \ns\ds =-\( I_s^{y}  \hat Y_s+I_s^{z} \hat Z_s  +I_s^{k} \int_E I_s^\psi(e)\hat K _s(e)   \l(\mathrm{d}e)   +\hat f_s
 \)\mathrm ds+\hat Z_s\mathrm dB_s+\int_E\hat K_s(e)\ti{\m}(\mathrm ds,\mathrm de),\q s\ges t,
 \ea$$
 where, for $s\ges t$, $e\in E$,
$$
\begin{aligned}
& I_s^{y} =\left\{
\begin{aligned}&\!\! \frac {f^1( s,X _s,Y _s^1,Z _s^1,\int_E \psi(s,K _s^1(e) , e)\l(\mathrm{d}e))-f^1 (s,X _s,Y _s^2,Z _s^1,\int_E \psi(s,K _s^1(e) , e)\l(\mathrm{d}e))}{ Y _s^1-Y _s^2 }, \q  \mbox{if }\hat Y _s  \neq 0,\\
&\!\!0, \hskip 12.8 cm \mbox{otherwise};
\end{aligned}\right.\\
& I_s^{z}  =\left\{
\begin{aligned}&\!\!\frac {f^1(s,X _s,Y _s^2,Z _s^1,\int_E \psi(s,K _s^1(e) , e)\l(\mathrm{d}e))-f^1(s,X _s,Y _s^2,Z _s^2,\int_E \psi(s,K _s^1(e) , e)\l(\mathrm{d}e))}{|Z _s^1-Z _s^2|^2 }(Z _s^1-Z _s^2)^\ast, \\
  &\hskip 12.9cm\mbox{if }\hat Z_s \neq 0,\\
&\!\!0, \hskip 12.7cm \mbox{otherwise};
\end{aligned}\right.
\end{aligned}
$$
$$ \begin{aligned}
& I_s^{k}  =\left\{
\begin{aligned}&\!\!\frac {f^1(s,X _{s-},Y _{s-}^2,Z _s^2,\int_E \psi(s,K _s^1(e) , e)\l(\mathrm{d}e))-f^1(s,X _{s-},Y _{s-}^2,Z _s^2,\int_E \psi(s,K _s^2(e) , e)\l(\mathrm{d}e))}{\int_E\big(\psi(s,K _s^1(e) , e)-\psi(s,K _s^2(e) , e)\big)\l(\mathrm de)  }, \\
  &\hskip 7cm   \mbox{if }\int_E\big(\psi(s,K _s^1(e) , e)-\psi(s,K _s^2(e) , e)\big)\l(\mathrm de)  \neq 0,\\
&\!\!0, \hskip6.8 cm \mbox{otherwise};\end{aligned}\right.
%\\
%
\end{aligned}$$
and
$$
\begin{aligned}
&\hat f_s:= f^1(X _s,  Y_s^2,   Z_s^2,\int_E   \psi(s,K _s^2(e) , e)\l(\mathrm{d}e))-f^2(X _s,  Y_s^2,   Z_s^2,\int_E   \psi(s,K _s^2(e) , e)\l(\mathrm{d}e)),\\
& I_s^{\psi}(e)  =\left\{
\begin{aligned}&\!\!\frac { \psi(s,K _s^1(e) , e) - \psi(s,K _s^2(e) , e)}{  K _s^1(e)-K _s^2(e)  },  \hskip1 cm  \mbox{if } \hat K_s (e) \neq 0,\\
&\!\!0, \hskip5.5 cm \mbox{otherwise}.\end{aligned}\right.
\end{aligned}$$
Note that, by {\bf (B1)},  {\bf (B2)} and {\bf (S)},  for any $s\ges t$, $e\in E$,
$
I_s^{y} \les -\eta_{f^1},$ $ |I_s^{z} |\les L_z ,$ $| I_s^{k} | \les L_k
$, $0\les  I_s^{\psi}(e)   \les  \varrho(1\wedge|e|)
$ and  $\hat f_s \les 0$,  $\dbP$-a.s. Moreover, $I^k$ and $I^\psi(\cdot)$ are predictable. Hence, for 
%
%For any $T>t$, $s\in[t,T]$, by introducing
$$\begin{aligned}
& M_s^{t,W}[I ^{z} ] :=
 \exp\Big\{\int_t^s I_r^{z}\mathrm dB_r-\frac12\int_t^s| I_r^{z}  |^2\mathrm dr  \Big\}, \\
& M_s^{t,\mu}[I ^{k}, I^\psi ]:= \prod\limits_{t<r\les s}\Big(1+\int_EI_r^{k}I_r^\psi(e)  \mu(\{r\},\mathrm de) \Big)\exp\Big\{-\int_t^s\int_EI_r^{k} I_r^\psi(e) \lambda(\mathrm{d}e)\mathrm dr\Big\},\ 0\les t\les s\les  T,
\end{aligned}$$
 we have that 
$M_s^{t}:= M_s^{t,W}[I  ^{z} ]\cd  M_s^{t,\m}[I ^{k}, I^\psi ]$, $s\in[t,T]$,   satisfies
$$\left\{\ba{ll}
 \ds\!\!\! dM_r^{t}= I_r^{z}  M_r^{t}\mathrm dB_r +  \int_E I_r^{k} I_r^\psi(e) M_{r-}^{t}\ti{\mu}(\mathrm dr,\mathrm de),\q r\in[t,T], \\
\ns\ds\!\!\! M_t^{t}=1.
\ea\right.$$
%$$ \ba{ll}
%\ns\ds\!\!\!  \dbE[|M_T^{t}|^2] = 1+ \dbE\int_t^T|I_{f}^{z}(r) |^2 |M_r^{t}|^2\mathrm dr +  \int_E |I_{f}^{k}(r)\rho( e) M_{r-}^{t}|^2\l(\mathrm de)\mathrm dr\\%
%\ns\ds\les1+   \dbE\int_t^T( l_{f^1z}^2  + l_{f^1k}^2C^2) |M_{r-}^{t}|^2)\mathrm dr,
%\ea $$
%we get
%
%%
By defining  the new probability measure  $  \dbP^M_T$ by $\frac{\mathrm d  \dbP^M}{\mathrm d\dbP}\big|_{\cF_T}= M_T^{t}$, $T>t$, we know that, with respect to $\dbP_T^M$,  $\ds   B_r^M:=B_r-\int_t^rI_r^{z} \mathrm dr$, $r\in[t,T]$, is  a Brownian motion, and $\tilde\mu^M(\mathrm dr,\mathrm de):=\tilde{\mu}(\mathrm dr,\mathrm de)-I_r^{k} I_r^\psi(e) \lambda(\mathrm{d}e)\mathrm dr$ is a  Poisson martingale measure with the new compensator $(1+I_r^{k} I_r^\psi(e) )\lambda (\mathrm{d}e)\mathrm dr$. We denote by  $\{\cF_s^M\}_{s\ges t}$  the new filtration generated by $\{B^M \}$ and $\{ \tilde\mu^M\}$.
Then, we have
$$
   d\hat Y_s=-\big(I_s^{y} \hat Y_s +\hat f_s
 \big)\mathrm ds+\hat Z_s\mathrm dB_s^M +\int_E\hat K_s(e)\ti{\m}^M(\mathrm ds,\mathrm de),\q s\ges t.
 $$

Setting $\ds \cE_r:=\exp\(  \int_t^rI_\t^{y} d\t\)\les \exp\big\{-\eta_{f^1}(r-t)\big\}$, $r\ges t$,  and applying It\^o's formula to $\cE_r\hat Y_r$, we get, $\dbP$-a.s.
%$$\ba{ll}
%\ns\ds d\cE_r\hat Y_r  = -\cE_r f_rdr+\cE_r\hat Z_r\mathrm dB_r^3+\int_E\cE_r\hat K_r(e)\ti{\m}^3(\mathrm dr,\mathrm de)
%%
%\ea$$
$$
 \ds \hat Y_t= \cE_T\hat Y_T+ \int_t^T \cE_r \hat f_r dr-\int_s^T \cE_r\hat Z_r\mathrm dB_r^M- \int_t^T\int_E \cE_r\hat K_r(e)\ti{\m}^M(\mathrm dr,\mathrm de),$$
and, thus, by Bayes' formula,
$$\ba{ll}
 \ds \hat Y_t  = \dbE_{\dbP_T^M}^{\cF_t} \[\cE_T\hat Y_T+ \int_t^T \cE_r  \hat f_r dr\]\les \dbE ^{\cF_t}[M_T^{t}\cE_T\hat Y_T]\les (\dbE ^{\cF_t}[ |M_T^{t}|^2])^\frac12 (\dbE^{\cF_t}[|\cE_T\hat Y_T|^2])^\frac12  \\
\ns\ds  \les  \exp\Big\{\(-\eta_{f^1}+\frac{L_z ^2+L_k^2|\rho(\cd)|_{\l,2}^2}{2} \)(T-t)\Big\}  (\dbE^{\cF_t} [|\hat Y_T|^2])^\frac12 \to 0, \mbox{ as } T\to \i,
\ea$$
where we have used $    \dbE^{\cF_t}[|M_T^{t}|^2]  \les  \exp\big\{ (L_z ^2+L_k^2|\rho(\cd)|_{\l,2}^2)  (T-t) \big\}
 $, $\bar \eta_{f^1}=\eta_{f^1}-\frac{L_z ^2+L_k^2|\rho(\cd)|_{\l,2}^2}{2} >0$, $\hat f_r\les 0$, $r\ges t$ and    Lemma \ref{Yto0}.
Therefore, $Y^1_t\les Y^2_t $, $\dbP$-a.s.

\end{proof}

 In the end of this section, let us still emphasize  that the  way  of the driver $f$ depends on $k$ is more  general than in the most of the literature, and this thanks to    the function  $\psi$. We have not only established the wellposedness of BSDE with jumps \eqref{BSDE}, but also its $L^p$-estimates $(p\ges 2)$ and a comparison theorem.

\section{Stochastic control problem for  infinite horizon forward-backward stochastic differential equations with jumps}

We begin with  the formulation of   the control problem. From now on,  let $m=1$.
Moreover, let $U\subseteq\dbR^l$ be nonempty and  compact, and  the mappings $b: \dbR^n\times U\to\dbR^n$,  $\si: \dbR^n\times U\to\dbR^{n\times d}$, $\g:  E \times\dbR^n\times U\to\dbR^n$, $f:\dbR^n\times\dbR\times\dbR^d\times\dbR\times U\to \dbR$ and $\rho:E\to\dbR$ satisfy   the following assumptions for some $p\ges 2$.

\ss

 {\bf (C1)$_p$}    There exist    nonnegative constants $\ell_b, \ell_\si, \ell_{1}$, a mapping $\ell_\g(\cd)\in L_\l^2(E;[0,1])\cap L_\l^p(E;[0,1])$ and $\a_b>0$ such that, for all $x$, $x'\in\dbR^n$, $e\in E$, $u\in U$,
 $$\ba{ll}
 \ds |b(x,u)-b(x',u)|\les\ell_b|x-x'|, \ |\si(x,u)-\si(x',u)|\les \ell_\si|x-x'|, \\
   \ns\ds    {|\g(e,x,u)-\g(e,x',u)|   \les \ell_1\ell_{\g}(e) |x-x'|},\\
 \ns\ds \lan b(x,u)-b(x',u),x-x'\ran \les -\a_b|x-x'|^2,\\
 \ns\ds \a_{b,p}:= 2\a_b-(p-1)\ell_\si  ^2-\frac{2c_p}p \ell_{\g,2} ^2-  c_p  \ell_{\g,p} ^p>0,\ea$$
 where $\ds\ell_{\g,p}:=\ell_1\(\int_E|\ell_\g(e)|^p\l(\mathrm de)\)^\frac 1p$ and  $c_p$ is defined in \eqref{cp}.

\smallskip

  {\bf (C2)}    There exist  nonnegative constants  $\ell_x, \ell_y,\ell_z,\ell_k$ and  $\a_f>0$ such that,   for all $x,x'\in\dbR^n$, $(y,z,k)$, $(y',z',k')\in\dbR\times\dbR^d\times  \dbR $,  $u\in U$,
   $$\ba{ll}
   \ds|f(x,y,z,k,u)-f(x',y',z',k',u)| \les \ell_x|x-x'|+\ell_y|y-y'| + \ell_z|z-z'|+\ell_k|k-k'|,\\
   \ns\ds\lan f(x,y,z,k,u)-f(x,y',z,k,u),y-y'\ran \les -\a_f|y-y'|^2,\\
   \ns\ds \bar\a_f:= \a_f -\frac{\ell_{z}^2+\ell_{k}^2|\rho(\cd)|_{\l,2}^2}{2}>0.\ea$$

  {\bf (C3)}   There exists a constant $\varrho>0$ such that $0\les \rho(e)\les \varrho(1\wedge|e|)$, $e\in E$.

 {\bf (C4)} For all $(x,y,z,u)\in   \dbR^n\times\dbR\times \dbR^d\times U$, $k,k'\in \dbR$,  $k\neq k'$,
$$\frac{f(x,y,z,k,u)-f(x,y,z,k',u)}{k-k'}> -\frac 1\varrho , \mbox{ where  }\varrho \mbox{ has been introduced   in      {\bf (C3)} }. $$
 Further, for $p\ges 2$ and $t\ges 0$,  we introduce the following set of  admissible  controls,
$$   {\cU^p_{t,\i}} :=\Big\{u\!\mid\! u:\Omega\times [t,\i)\to U  \mbox{ is } \dbF\mbox{-predictable with } \esssup\limits_{t\ges0}\big(\Pi^{1,u,p}_t  + \Pi^{2,u,p}_t \big)<\i, \ \dbP\mbox{-a.s.}\Big\},
$$
where
$$\ba{ll}
 \ds \Pi^{1,u,p}_t:=\dbE^{\cF_t}\[\int_t^\i\big( |b(0,u_s)|^p+|\si(0,u_s)|^p+ |\g(\cd,0,u_s)|_{\l,p}^p   \big)\mathrm ds \\
\ns\ds\qq\qq\qq\qq +  \(\int_t^\i\big( |b(0,u_s)|^2+|\si(0,u_s)|^2+ |\g(\cd,0,u_s)|_{\l,2}^2   \big)\mathrm ds\)^\frac p2\],\\
\ns\ds  \Pi^{2,u,p}_t:=\dbE^{\cF_t}\[ \(\int_t^\i  |f(0,0,0,0,u_s)|^2   \mathrm ds\)^\frac p2\].
\ea $$

 For any  initial state $x\in  \dbR^n$ and $u(\cd)\in \cU^p_{0,\i}$, we consider the following infinite horizon SDE    with jumps,
 \bel{state}
 \left\{\ba{ll}
 \ds\!\!\!\! \mathrm dX^{ x;u}_s=b(X^{x;u}_s,u_s)\mathrm ds+\si(X^{x;u}_s,u_s)  \mathrm dB_s+\int_E\g(e,X^{x;u}_{s-},u_{s})\ti{\m}(\mathrm ds,\mathrm de),\ s\ges 0,\\
 \ns\ds\!\!\!\! X^{ x;u}_0=x,
 \ea\right.\ee
and  the following  BSDE    with jumps
 \bel{BSDE-cost}\ba{ll}
   \ds  Y^{x;u}_s= Y^{x;u}_T+ \int_s^Tf\big(X^{x;u}_r,Y^{x;u}_r,Z^{x;u}_r,\int_E K^{x;u}_r(e)\rho( e)\l(\mathrm{d}e),u_r\big)\mathrm dr  -\int_s^T Z^{x;u}_r\mathrm dB_r\\
   \ns\ds \qq\q  -\int_s^T\int_EK^{x;u}_r(e)\ti{\m}(\mathrm dr,\mathrm de),\q    0\les s\les T <\i.
 \ea\ee
 To emphasize better the main arguments of studying the control problems,  for $(r,k,e)\in[0,\i)\times\dbR\times E$,  we specify  $\psi(r,k,e)$ in the   BSDE \eqref{BSDE}  to be  $\psi(r,k,e)=k\rho(e)$,   just like \eqref{BSDE-cost}.

  Based on  our studies in Section 2,  we have the following results concerning SDE \eqref{state} and BSDE \eqref{BSDE-cost}.
%Note that the following $c_p$ is still the one in \eqref{cp}.

\bl\label{Lemma-SDE-1}\sl Let $p\ges 2$ and  assume {\bf (C1)$_p$}.
For all $u(\cd)\in\cU^p_{0,\i}$,  SDE  \eqref{state} admits a unique solution $X^{ x;u}\in L_\dbF^p(0,\i;\dbR^n)\cap  L_\mathbb{F}^p(\Omega;L^2(0,\infty;\mathbb{R}^n))$.  Further, we have the existence of nonnegative constants $C_p$ independent of $u(\cd)$ such that,  for all $x,x'\in\dbR^n$ and $t\ges 0$,
\bel{SDE-(i)}\ba{ll}
 \ds {\rm(i)} \ \lim\limits_{s\to\i} \dbE^{\cF_t} [  |X_s^{ x;u} |^p]=0,\ \dbP\mbox{-a.s.}\\

\ns\ds  {\rm(ii)}\   \dbE ^{\cF_t}\[\sup\limits_{s\in[t,\i)}  |X_s^{ x;u} |^p+\int_t^\i   |X_r^{ x;u}|^p\mathrm dr +\(\int_t^\i   |X_r^{ x;u}|^2\mathrm dr \)^\frac p2\]  \les C_p\big(\Pi^{1,u,p}_t+|X_t^{ x;u} |^p \big), \ \dbP\mbox{-a.s.}\\
\ns\ds {\rm(iii)}\    \dbE ^{\cF_t}\[\sup\limits_{s\in[t,\i)}|X_s^{ x;u} - X_s^{ x';u} |^p +   \int_t^\i  |X_r^{ x;u} -  X_r^{ x';u} |^p\mathrm dr+\(\int_t^\i   |X_r^{ x;u} -  X_r^{ x';u} |^2\mathrm dr \)^\frac p2\] \\
\ns\ds \qq \les C_p  |X_t^{ x;u}-X_t^{ x';u}|^p, \ \dbP\mbox{-a.s.}
\ea\ee
%
%where $C_p$ is a nonegative constant depending on $p$, $\ell_b, \ell_\si, \ell_{\g,p} $ and $\a_b$.
 \el

%Next,
% we consider the following infinite horizon BSDE with jump diffusions,
% \bel{BSDE-cost}
%\ba{ll}
% \ns\ds\!\!\! \mathrm dY^{x;u}_s=-f\big(X^{x;u}_s,Y^{x;u}_s,Z^{x;u}_s,\int_E K^{x;u}_s(e)\rho( e)\l(\mathrm{d}e),u_s\big)\mathrm ds  \\
% %
% \ns\ds\!\!\!\qq\qq+Z^{x;u}_s\mathrm dB_s+\int_EK^{x;u}_s(e)\ti{\m}(\mathrm ds,\mathrm de),\q s\ges 0,
% \ea\ee
% %
% where $X^{x;u}$ satisfies \eqref{state}. We assume the driver  $f:\dbR^n\times\dbR\times\dbR^d\times\dbR\times U\to \dbR$ and the mapping $\rho:\dbR^n\times E\to \dbR$ satisfy

% \ss
 %

%  \no{\bf (H2)} for any $x,x'\in\dbR^n$, $(y,z,k)$, $(y',z',k')\in\dbR\times\dbR^d\times  \dbR $, $u\in U$,   $e\in E$, there exist the nonnegative constants  $L_2$, $\ell_y$, $\ell_z$,  $C_\rho$ and  $\a_f>0$ such that£¬ $|\rho(\cd) |_\l\les C_\rho $,
%  %
%  %
%   $$\ba{ll}
%   \ns\ds|f(x,y,z,k,u)-f(x',y',z',k',u)| \les L_2(|x-x'|+|y-y'|)+\ell_z |z-z'|+ \ell_k|k-k'| );\\
%   %
%   \ns\ds\lan f(x,y,z,k,u)-f(x,y',z,k,u),y-y'\ran \les -\a_f|y-y'|^2;\\
%   %
%   \ns\ds  .\ea$$

\bl\label{Le-Yp} \sl Assume  the conditions of Lemma \ref{Lemma-SDE-1},   {\bf (C2)},  {\bf (C3)} hold true. For all  $u(\cd)\in\cU^p_{0,\i}$,
  the infinite horizon BSDE \eqref{BSDE-cost} admits a unique solution $(Y^{x;u},Z^{x;u},K^{x;u} )\in \sS^p_\dbF(0,\i)$. Moreover, there exists some nonnegative constant  $C_p$ independent of $u(\cd)$ such that, for  all $x,x'\in\dbR^n$ and $t\ges 0$, $\dbP$-a.s.,
$$\ba{ll}
\ds{\rm (i)}\ \lim\limits_{s\to\i}\dbE^{\cF_t} [  |Y_s ^{x;u}|^2]   =0,\\
\ns\ds {\rm (ii)}\  \dbE^{\cF_t} \[ \sup\limits_{s\in[t,\i)} |Y_s^{x;u} |^p   + \(  \int_t^\i\big(|Y_r^{x;u}|^2 +  |Z _r^{x;u}|^2+    |K _r^{x;u}(\cd)|_{\l,2}^2 \big) \mathrm dr \)^\frac p2 \] \les  C_p(\Pi^{1,u,p}_t+\Pi^{2,u,p}_t+|X_t^{x;u}|^p),\\
\ns\ds {\rm (iii)}\   \dbE^{\cF_t} \[\sup\limits_{s\in[t,\i)}| Y_s^{x;u}-  Y_s^{x';u}  |^p  \!+ \! \(\!\int_t^\i \!\!\big( | Y_r^{x;u}- Y_r^{x';u} |^2\!+  \! |Z_r^{x;u}-  Z_r^{x';u} |^2\!+\! |K_r^{x;u}(\cd)-  K_r^{x';u}(\cd)  |^2_ \l
 \big) \mathrm dr\! \)^\frac p2 \]\!\!\!\!\!\!\!\\
\ns\ds\qq  \les C_p |X_t^{x;u}-X_t^{x';u}|^p.
\ea$$

\el

Based on the above preparations, we   define  the recursive   cost functional with the help of our     BSDE with jumps  \eqref{BSDE-cost} as follows: for all $u(\cd)\in\cU^p_{0,\i}$,
\bel{cost}
J(x;u(\cd)):=Y_0^{x;u},\q  x \in \dbR^n.
\ee
Then, we can  formulate the following  optimal control problem for given $p\ges 2$.

\ss

 {\bf Problem (OC)$_p$} {\sl For any $x\in\dbR^n$, find $\bar u(\cd)\in\cU^p_{0,\i}$, such that
\bel{bar u} J(x;\bar u(\cdot))=V_p(x):=\sup_{u(\cd)\in\cU^p_{0,\i} }J(x;u(\cdot)).\ee
A control $\bar u(\cd)$ satisfying  \eqref{bar u}  is said to be  optimal for Problem (OC)$_p$, and  by $\bar X(\cd)= X^{x;\bar u}(\cd)$ we denote
  the corresponding optimal state process.
The above defined mapping
 $V_p:\dbR^n\to\dbR$     is called   the    value  function of Problem (OC)$_p$.}

\br{}\sl
Note that   Problem (OC)$_p$ will be studied for any given  $p\ges 2$. The different $p$ effect   the conditions imposed on $b,\si,\g$ and $ f $ through   hypothesis  {\bf(C1)$_p$} and $\cU_{0,\i}^p$, but also the spaces of the  state processes $X^{x;u}$ and the  cost functionals $Y^{x;u}$.
The reason of doing this, rather than choosing $p=2$ as usual, is that the   value function $V_p(\cd)$ may have different properties under different $p$ (or {\bf(C1)$_p$} and $\cU_{0,\i}^p$), which concerns in particular  the semi-convexity of $V_p(\cd)$. The phenomenon will  be  clearly explained by the  proof of Proposition \ref{Le-Semi}.

\er

%\subsection{The   properties of the value function $V_p(\cd)$}
Before the study of Problem (OC)$_p$, we shall study  some properties of the value function $V_p(\cd)$ defined by \eqref{bar u}.
We have the following first results directly from the Lemmas \ref{Lemma-SDE-1} and  \ref{Le-Yp}.

\bl\label{W-Lip}\sl For any given $p\ges 2$,    let {\bf(C1)$_p$}-{\bf(C3)} hold. Then there exists a constant $C>0$ such that,
for   all $x,x'\in\dbR^n$,
$$ |V_p(x)|\les C(1+|x|),\q  |V_p(x)-V_p(x') | \les C| x-x' |.
 $$

\el

 Next, to prepare  the subsequent research for the   stochastic verification theorem of Problem (OC)$_p$   in Section 5, we  study the semi-convexity of the value function $V_p(\cd)$. For this, we assume  the following    additional conditions on the coefficients $b,\si,\g$ and $f$ of \eqref{state} and \eqref{BSDE-cost}.
\ms

  {\bf (D1)} (i) For all $u\in U$,  $b(x,u)$, $\si(x,u)$   are differentiable in $x\in\dbR^n$, and the corresponding  first-order partial derivatives in  $x\in\dbR^n$ are continuous in $ u \in    U$ and  Lipschitz continuous in $x$, uniformly with respect to $u\in  U$.

   (ii) For all $u\in U$, $e\in E$,   $\g(e,x,u)$ are differentiable in $x\in\dbR^n$, and the corresponding  first-order partial derivatives in  $x\in\dbR^n$ are continuous in $ u \in U$, and there exists  a mapping $  \bar\ell_{\g}(\cd)\in L_\l^2(E;
  \dbR^+)  $ such that, for all $x,x'\in\dbR^n$, $u\in U$,  $e\in E$,
    {  $|\g_x(e,x,u)-\g_x(e,x',u)|\les  \bar\ell_{\g}(e) |x-x'|.$}

\ss
 {\bf (D2)}   $f(x,y,z,k,u)$ is semi-convex in $(x,y,z,k)\in\dbR^n\times \dbR\times\dbR^d\times \dbR,$ uniformly with respect to $u\in U$.

\bp\label{Le-Semi}\sl Let $p>4$ be given. If the conditions  {\bf (C1)$_p$}-{\bf (C4)},  {\bf (D1)}, {\bf (D2)} and
\bel{b4}\a_{b,4}>0,\q \ell_\g(\cd)\in L^4_\l(E;[0,1]),\q \dbE\[\int_0^\i\big( |b(0,u_s)|^4+|\si(0,u_s)|^4+ |\g(\cd,0,u_s)|_{\l,4}^4   \big)\mathrm ds\]<\i,\ee
 hold true, then  $V_p(\cd)$ is semi-convex, i.e., $V_p(\cd) +\k|\cd|^2$ is convex  for some $\k\ges 0$.

\ep
%
%\br\sl
%\textcolor[rgb]{1.00,0.00,0.00}{The condition $\a_{b,4}>0$ in \eqref{b4} can be ignored if $\l(E)\les 1$ and $\ds \int_E|l_\g(e)|^4\l(\mathrm de)\ges 1$, under which $\a_{b,p}>0 $ with $p>4$ implies $\a_{b,4}>0 $. }
%
%In fact, when  $p>4$, $  c_p  =p(p-1)  2^{p-4} $,
%%
%$ \a_{b,p} = 2\a_b-(p-1)\ell_\si  ^2- (p-1)  2^{p-3}  \ell_{\g,2} ^2- p(p-1)2^{p-4}  \ell_{\g,p} ^p $,
%
%$ \a_{b,4}:= 2\a_b-3\ell_\si  ^2-6 \ell_{\g,2} ^2-  12 \ell_{\g,4} ^4$.
%
%$$\ba{ll}
%\ds \a_{b,4}-\a_{b,p}=2\a_b-3\ell_\si  ^2-6 \ell_{\g,2} ^2-  12 \ell_{\g,4} ^4-2\a_b+(p-1)\ell_\si  ^2+ (p-1)  2^{p-3}  \ell_{\g,2} ^2+  c_p  \ell_{\g,p} ^p\\
%%
%\ns\ds =   (p-4)\ell_\si  ^2+ [(p-1)  2^{p-3}  -6]\ell_{\g,2} ^2+p(p-1) 2^{p-4}  \ell_{\g,p} ^p -  12 \ell_{\g,4} ^4\\
%%
%\ns\ds> 12( \ell_{\g,p} ^p -  \ell_{\g,4} ^4)
%
%\ea$$
%If $\l(E)\les 1$ and $\ds \int_E|l_\g(e)|^4\l(\mathrm de)\ges 1$,
%$$\ba{ll}
% \ds\ell_{\g,4} ^4=\int_E|l_\g(e)|^4\l(\mathrm de)\les \(\int_E|l_\g(e)|^p\l(\mathrm de)\)^{\frac 4p} \cd\l(E)^{\frac{p-4}p}\les \(\int_E|l_\g(e)|^p\l(\mathrm de)\)^{\frac 4p} .
%\ea$$
%Therefore, when $p>4$,
%$$\ba{ll}
% \ds \ell_{\g,p} ^p= \int_E|l_\g(e)|^p\l(\mathrm de)  \ges\(\int_E|l_\g(e)|^4\l(\mathrm de) \) ^\frac p4> \int_E|l_\g(e)|^4\l(\mathrm de)=\ell_{\g,4} ^4.
%\ea$$
%Moreover, $\a_{b,4}>\a_{b,p}>0$.
%%
%%
%%$ \a_{b,p} = 2\a_b-(p-1)\ell_\si  ^2-\frac{2c_p}p \ell_{\g,2} ^2-  c_p  \ell_{\g,p} ^p>0$, $  c_p  =p(p-1)\big(2^{-1}I_{\{2< p<3\}}+ 2^{p-4}I_{\{p=2\} \cup\{ p\ges 3\}}\big)$
%%
%
%
%\er

We  first   study some auxiliary results before proving  Proposition \ref{Le-Semi}. For convenience, for   $x_1$, $x_2\in\dbR^n$, and  $\d\in [0,1],$  we set $x_\d :=\d x_1+( 1-\d )x_2$ and
 \bel{Not-XYZK}  \widetilde{\varphi}^u := \d \varphi^{x_1;u} +( 1-\d )\varphi^{x_2;u } ,\q \h \varphi^u := \varphi^{x_1;u} -\varphi^{x_2;u },  \mbox{ for } \f=X,Y,Z,K,\mbox{ respectively.}\ee

 \bl\label{Le-X-4} \sl  Under the assumptions of Proposition \ref{Le-Semi},   there exists a constant $C>0$ such that,  $\dbP$-a.s., for all $t\ges 0$ and $u(\cd)\in \cU_{0,\i}^p$,
\bel{SDE-4}   \dbE ^{\cF_t}\[\sup\limits_{s\in[t,\i)}|\widetilde{X} _s^u -X^{x_\d;u}_s |^2+  \int_t^\i| \widetilde{X} _r^u -X^{x_\d;u}_r |^2\mathrm dr \] \les  C\(|\widetilde{X} _t^u-X^{x_\d;u}_t  |^2+   \d^2( 1-\d )^2 |\h X^u _t |^4\).
   \ee
 \el

\begin{proof}
It is necessary to point out that the assumptions in Proposition \ref{Le-Semi}  imply  {\bf (C1)$_4$},   $\a_{b,2}  >0$  and $u(\cd)\in \cU_{0,\i}^4 $, which  will be needed in our proof. In the following, the constant  $C$ can    differ  from line to line.

First we remark that,    for all $s\ges 0$,
 $$\ba{ll}
  \ds \widetilde{X} _s^u -X^{x_\d;u}_s =\d X^{x_1;u }_s +(1-\d)X^{x_2;u }_s -X^{x_\d;u}_s \\
 \ns\ds=\int_0^s\big(J_b(r)+ b(\widetilde{X} _r^u ,u_r  )- b (X^{x_\d;u}_r ,u_r  ) \big)\mathrm dr+\int_0^s\big(J_\si(r)+ \si(\widetilde{X} _r^u ,u_r  )- \si(X^{x_\d;u}_r ,u_r  )\big)\mathrm dB_r\\
 \ns\ds\q+\int_0^s\int_E\big(J_\g(r,e)+ \g(e,\widetilde{X} _{r-} ,u_{r}  )-\g(e,X^{x_\d;u}_{r-} ,u_{r}  )\big)\ti\m(\mathrm dr,\mathrm d e),
 % %
 \ea $$
 where $$\ba{ll}
   \ds J_g(r)  :=   \d g\big(X^{x_1;u }_r ,u_r  \big)+(1-\d)g\big(X^{x_2;u}_r ,u_r  \big) - g(\widetilde{X} _r^u ,u_r  \big) ,\q g=b,\si, \mbox{ respectively,}\\
   \ns\ds J_\g(r,e )  :=   \d \g\big(e,X^{x_1;u }_{r-} ,u_{r}  \big)+(1-\d)\g\big(e,X^{x_2;u}_{r-} ,u_{r}  \big) - \g(e, \widetilde{X}^u _{r-} ,u_{r}  \big),\q r\ges 0, \ e\in E.\ea$$
 Note that, from {\bf (D1)}, for all $r\ges  0$,
 $$\ba{ll}
 \ds  |J_g(r)| :=  |\d g\big(X^{x_1;u }_r ,u_r  \big)+(1-\d)g\big(X^{x_2;u}_r ,u_r  \big) - g(\widetilde{X} _r^u ,u_r  \big)  | \\
  %
 % \ns\ds\qq\q \les  \d b\big(X^{x_1;u }_r ,u_r  \big)+(1-\d)b\big(X^{x_2;u }_r ,u_r  \big) - b(\widetilde{X} _r^u ,u_r  \big)  + b(\widetilde{X} _r^u ,u_r  \big)- b\big(X^{x_\d;u}_r ,u_r  \big)   \\
% % %
%  \ns\ds\qq\q\les \d(1-\d)  \h X^u _r \int_0^1b_x(\widetilde{X} _r^u +\th( 1-\d )   \h X^u _r,u_r \big)d\th  \\
%%
%\ns\ds\qq\qq\  - \d(1-\d) \h X^u _r \int_0^1b_x(\widetilde{X} _r^u -\th\d  \h X^u _r ,u_r  \big)d\th
%+ \widetilde{X} _r^u  -X^{x_\d;u}_r    \\
%%
  \ns\ds\qq\q\les \d(1-\d)|  \h X^u _r |\cd \int_0^1\big|g_x(\widetilde{X} _r^u +\th( 1-\d )   \h X^u _r,u_r \big)-g_x(\widetilde{X} _r^u -\th\d  \h X^u _r ,u_r  \big)\big|\mathrm d\th   \\
%
%  \ns\ds\qq\q\les\d(1-\d)| \h X^u _r |\Big| \int_0^1\(b_x(\widetilde{X} _r^u +\t\Phi( 1-\d ) \h X^u _r ,u_r \big)\\
%  %
%  \ns\ds\hskip5cm-b_x(\widetilde{X} _r^u -\th\d   \h X^u _r ,u_r  \big)\) d\th \Big|+\big| \widetilde{X} _r^u  -X^{x_\d;u}_r  \big|\\
%  %
  \ns\ds\qq\q\les C\d(1-\d)| \h X^u _r |^2, \q  \dbP\mbox{-a.s., } \q  g=b,\si, \mbox{ respectively,}
 \ea $$
 and similarly,
$\ds
 {|J_\g(r,e) |    \les   C   \bar\ell_\g(e) \d(1-\d)| \h X^u _r |^2 }$,   $e\in E$, $\dbP$-a.s., where  $C>0$  is a constant.

 By applying It\^o's formula to $ |\widetilde{X}_s   -X^{x_\d;u}_s |^2$,   for all $s\ges t\ges 0$ and $\e>0$, we have
  $$\ba{ll}
\ds  \dbE^{\cF_t}\[|\widetilde{X} _s^u -X^{x_\d;u}_s |^2 \] =  |\widetilde{X} _t^u-X^{x_\d;u}_t  |^2  + \dbE^{\cF_t}\[ \int_t^s\(2\lan \widetilde{X} _r^u -X^{x_\d;u}_r ,J_b(r)+ b(\widetilde{X} _r^u ,u_r  )- b (X^{x_\d;u}_r ,u_r  ) \ran  \\
  \ns\ds\qq\qq\qq\q +  |J_\si(r)+ \si(\widetilde{X} _r^u ,u_r  )- \si(X^{x_\d;u}_r ,u_r  )|^2+|J_\g(r,\cd) + \g(\cd,\widetilde{X} _r^u ,u_r  )-\g(\cd,X^{x_\d;u}_r ,u_r  )|_{\l,2}^2\)  \mathrm dr \] \\
 %
%\ns\ds \les \dbE \int_0^s\lan \widetilde{X} _s^u -X^{x_\d;u}_r ,\d b\big(X^{x_1;u }_r ,u_r  \big)+(1-\d)b\big(X^{x_2;u }_r ,u_r  \big) - b(\widetilde{X} _r^u ,u_r  \big)  + b(\widetilde{X} _r^u ,u_r  \big)- b\big(X^{x_\d;u}_r ,u_r  \big)  \ran \mathrm dr\\
%%
%\ns\ds \q  +\dbE\int_0^s(| \d \si(X^{x_1;u }_r ,u_r  \big)+( 1-\d )\si(X^{x_2;u }_r ,u_r  \big)-\si(\widetilde{X} _r^u ,u_r  \big)+\si(\widetilde{X} _r^u ,u_r  \big)- \si(X^{x_\d;u}_r ,u_r  \big)|^2\\
%%
%\ns\ds\qq +| \d \g(\cd,X^{x_1;u }_r ,u_r  \big)+( 1-\d )\g(\cd,X^{x_2;u }_r ,u_r  \big)-\g(\cd,\widetilde{X} _r^u ,u_r  \big)+\g(\cd,\widetilde{X} _r^u ,u_r  \big)- \g(\cd,X^{x_\d;u}_r ,u_r  \big)|_{\l,2}^2)  \mathrm dr\\
 % %
% \ns\ds =\dbE \int_0^s\(2\lan  \widetilde{X} _r^u -X^{x_\d;u}_r ,  b(\widetilde{X} _r^u ,u_r  \big)- b(X^{x_\d;u}_r ,u_r  )  \ran +|\si(\widetilde{X} _r^u ,u_r  \big)- \si(X^{x_\d;u}_r ,u_r  \big)|^2+|\g(\cd,\widetilde{X} _r^u ,u_r  \big)- \g(\cd,X^{x_\d;u}_r ,u_r  \big)|_{\l,2}^2\)\mathrm dr\\
% %
%\ns\ds \q  +2\dbE\int_0^s\( \lan \widetilde{X} _r^u -X^{x_\d;u}_r ,J_b(r)   \ran+ \lan J_\si(r) ,\si(\widetilde{X} _r^u ,u_r  \big)- \si(X^{x_\d;u}_r ,u_r   )\ran + \lan J_\g(r,\cd)  ,\g(\cd,\widetilde{X} _r^u ,u_r  \big)- \g(\cd,X^{x_\d;u}_r ,u_r   )\rangle_\l\)  \mathrm dr\\
%%
% \ns\ds\q+\dbE \int_0^s(| J_\si(r)|^2+| J_\g(r,\cd) |_{\l,2}^2 )\mathrm dr\\
%%
 \ns\ds \les  |\widetilde{X} _t^u-X^{x_\d;u}_t  |^2  + \dbE^{\cF_t}\[   \int_t^s\((- \a_{b,2 }+3 \e)| \widetilde{X} _r^u -X^{x_\d;u}_r |^2  +\frac1\e  |J_b(r)     |^2  +C_\e \big(| J_\si(r)   |^2+| J_\g (r,\cd)  |_{\l,2}^2 \big)  \) \mathrm dr\]\\
 % %
  \ns\ds \les  |\widetilde{X} _t^u-X^{x_\d;u}_t  |^2 - (  \a_{b,2 }-3 \e) \dbE^{\cF_t}\[   \int_t^s | \widetilde{X} _r^u -X^{x_\d;u}_r |^2\mathrm dr\]  + {C}_\e \d^2(1-\d)^2\dbE^{\cF_t}\[  \int_t^s | \h X^u _r |^4  \mathrm dr\]  ,\ \dbP\mbox{-a.s.}
 % %
% \ns\ds \les  |\widetilde{X} _r^u-X^{x_\d;u}_t  |^2 - (  \a_{b,2 }-3 \e) \dbE^{\cF_t}\[   \int_t^s | \widetilde{X} _r^u -X^{x_\d;u}_r |^2\mathrm dr\]  + K^2 \d^2( 1-\d )^2 |\h X^u _t |^4,\ \dbP\mbox{-a.s.},
 \ea $$
 %pp
%Note that, $\a_{b,4}  >0$ implies that  $\a_{b,2}  >0$.
  %
Then, by choosing  $\e$ small enough and letting $s\to\i $,  from   \eqref{SDE-(i)}-(iii),  we get
\bel{SDE-4-int}   \dbE ^{\cF_t}\[   \int_t^\i| \widetilde{X} _r^u -X^{x_\d;u}_r |^2\mathrm dr \] \les  C_\e\(|\widetilde{X} _t^u-X^{x_\d;u}_t  |^2+   \d^2( 1-\d )^2 |\h X^u _t |^4\), \  \dbP\mbox{-a.s.}
   \ee
We remark that  condition \eqref{b4}  has been used  above.

Further, by applying   It\^o's formula to $ |\widetilde{X}_s^u  -X_s^{x_\d;u}   |^2$ again, for all $T>t\ges 0$, we get
%p
$$\ba{ll}
\ds    {\dbE^{\cF_t}\[\sup\limits_{s\in[t ,T]}|\widetilde{X} _s^u  -X^{x_\d;u}_s |^2\]}\les \dbE^{\cF_t}\[ \int_t^T2| \widetilde{X} _r^u -X^{x_\d;u}_r |\cd|J_b(r)+ b(\widetilde{X} _r^u ,u_r  )- b (X^{x_\d;u}_r ,u_r  )| \mathrm dr\] \\
  \ns\ds\q   +\dbE^{\cF_t}\[\int_t^T\(|J_\si(r)+ \si(\widetilde{X} _r^u ,u_r  )- \si(X^{x_\d;u}_r ,u_r  )|^2+|J_\g(r,\cd) + \g(\cd,\widetilde{X} _r^u ,u_r  )-\g(\cd,X^{x_\d;u}_r ,u_r  )|_{\l,2}^2\)  \mathrm dr \] \\
   \ns\ds\q+ C\dbE^{\cF_t}\[\( \int_t^T| \widetilde{X} _r^u -X^{x_\d;u}_r |^2\cd|J_\si(r)+ \si(\widetilde{X} _r^u ,u_r  )- \si (X^{x_\d;u}_r ,u_r  )|^2 \mathrm dr\)^\frac12\]\\
   \ns\ds\q +C\dbE^{\cF_t}\[\Big|\int_t^T\int_E| \widetilde{X} _{r-}^u -X^{x_\d;u}_{r-} |^2\cd |J_\g(r,e)+ \g(e,\widetilde{X} _{r-}^u ,u_r  )- \g (e,X^{x_\d;u}_{r-} ,u_r  )|^2 {\m(\mathrm dr, \mathrm de)}\Big|^\frac12\] \\
 \ns\ds\les  C\dbE^{\cF_t}\[ \int_t^T\(  \d(1-\d) | \widetilde{X} _r^u -X^{x_\d;u}_r |\cd| \h X^u _r |^2+| \widetilde{X} _r^u -X^{x_\d;u}_r |^2 +  \d^2(1-\d)^2    | \h X^u _r |^4 \)\mathrm dr\]\\
   \ns\ds\q+ C\dbE^{\cF_t}\[\( \sup\limits_{r\in[t,T]}| \widetilde{X} _r^u -X^{x_\d;u}_r |^2 \int_t^T|J_\si(r)+ \si(\widetilde{X} _r^u ,u_r  )- \si (X^{x_\d;u}_r ,u_r  )|^2 \mathrm dr\)^\frac12\]\\
   \ns\ds\q + C\dbE^{\cF_t}\[\Big|\sup\limits_{r\in[t,T]}| \widetilde{X} _r^u -X^{x_\d;u}_r |^2  \int_t^T\int_E |J_\g(r,e)+ \g(e,\widetilde{X} _{r-}^u ,u_{r} )- \g (e,X^{x_\d;u}_{r-} ,u_{r}  )|^2 \m(\mathrm dr,\mathrm de)\Big|^\frac12\] .
    \ea$$
    As, obviously,
    $$\ba{ll}
\ds  \dbE^{\cF_t}\[\Big|\sup\limits_{r\in[t,T]}| \widetilde{X} _r^u -X^{x_\d;u}_r |^2  \int_t^T\int_E |J_\g(r,e)+ \g(e,\widetilde{X} _{r-}^u ,u_{r}  )- \g (e,X^{x_\d;u}_{r-} ,u_{r}  )|^2 \m(\mathrm dr,\mathrm de)\Big|^\frac12\]\\
 \ns  \ds \les  C\dbE^{\cF_t}\[\(\sup\limits_{r\in[t,T]}| \widetilde{X} _r^u -X^{x_\d;u}_r |^2  \int_t^T\int_E \( |\bar\ell_\g(e)|^2   \d^2(1-\d)^2| \h X^u _{r-} |^4  +\ell_\g(e)^2 | \widetilde{X} _{r-}^u  -  X^{x_\d;u}_{r-}  |^2 \)\m(\mathrm dr,\mathrm de)\)^\frac12\]\\

 \ns  \ds \les \frac 14  \dbE^{\cF_t}\[\sup\limits_{r\in[t,T]}| \widetilde{X} _r^u -X^{x_\d;u}_r |^2\]+  C\dbE^{\cF_t}\[\int_t^T\int_E \( |\bar\ell_\g(e)|^2   \d^2(1-\d)^2| \h X^u _r |^4  +\ell_\g(e)^2 | \widetilde{X} _r^u  -  X^{x_\d;u}_r  |^2 \)\l(\mathrm de)\mathrm dr\],
    \ea$$
it follows from the above estimate that
    $$\ba{ll}
 \ds\dbE^{\cF_t}\[\sup\limits_{s\in[t ,T]}|\widetilde{X} _s^u  -X^{x_\d;u}_s |^2\]\\
   \ns\ds \les C  \dbE^{\cF_t}\[ \int_t^T| \widetilde{X} _r^u -X^{x_\d;u}_r |^2 \mathrm dr\] + C^2\d^2(1-\d)^2\dbE ^{\cF_t}\[\int_t^T   | \h X^u _r |^4\mathrm dr\]+\frac12 \dbE^{\cF_t}\[\sup\limits_{s\in[t,T]}|\widetilde{X} _s^u -X^{x_\d;u}_s |^2\].
  \ea$$
 Thus, considering \eqref{SDE-4-int} and \eqref{SDE-(i)}-(iii), we obtain
 $$
   \ds\dbE^{\cF_t}\[\sup\limits_{s\in[t ,T]}|\widetilde{X} _s^u  -X^{x_\d;u}_s |^2\]
 \les   C\(|\widetilde{X} _t^u-X^{x_\d;u}_t  |^2+   \d^2( 1-\d )^2 |\h X^u _t |^4\)  ,\q  \dbP\mbox{-a.s.}
  $$
Letting $T\to\i$, \eqref{SDE-4} is derived by  combining the above estimate with \eqref{SDE-4-int}.

 \end{proof}

With the notations introduced in \eqref{Not-XYZK}, we have
\bel{ti-Y}\ba{ll}
  \ds  \widetilde{Y}^u _s  = \widetilde{Y}^u _T+\int_s^T   \(\d f\big(X^{x_1;u }_r,Y^{x_1;u }_r,Z^{x_1;u }_r ,\int_EK^{x_1;u }_r(e)\rho(e)\l(\mathrm de),  u_r  \big)\\
 \ns\ds\hskip3cm +(1-\d)f\big(X^{x_2;u }_r,Y^{x_2;u }_r,Z^{x_2;u }_r ,\int_EK^{x_2;u }_r(e)\rho(e)\l(\mathrm de),  u_r \big)\)\mathrm dr \\
 \ds\qq \q -\int_s^T \widetilde{Z}^u _r  \mathrm dB_r -\int_s^T \int_E \widetilde{K}^u _r(e)  \ti{\m}(\mathrm dr,\mathrm de),\q 0\les s\les T<\i.
   \ea \ee
   Then, we have the following result.
%
%Let us    introduce another infinite horizon BSDE with jumps as follows,
%\bel{hat-Y}\ba{ll}
% \ns\ds \cY^u _s  =    \int^\i_s\[ f \big( X^{x_\d;u}_r,\cY^u_r -\cC_1(r)-K\d(1-\d)\cC_2(r)^2,\cZ^u_r ,\int_E \cK^u_r(e)\rho(e)\l(\mathrm de),   u_r  \big)-\ell_x|\widetilde X_r- X^{x_\d;u}_r|\\
%%
%\ns\ds \qq\qq\q -\ell_y\cC_1(r) -\ell_yK\d(1-\d)\cC_2(r)^2- K \d(1-\d) \big(| \h X^u _r |^2+|\h  Z ^{ u }_r |^2+|\h  K ^{ u }_r(\cd) |_{\l,2}^2\big)\]\mathrm dr \\
%%
%\ns\ds\qq - \int_s^\i \cZ^u _r  \mathrm dB_r- \int_s^\i\int_E \cK^u _r (e) \ti\mu(\mathrm dr,\mathrm de),\q s\ges 0,\q u(\cd)\in \cU_{0,\i}^{\textcolor[rgb]{1.00,0.00,0.00}{p}}, K?
%   \ea \ee
%%
%where $\cC_1(r):= |\widetilde X_r-X_r^{x_\d;u}|$, $\cC_2(r):= | \h X^u _r |$, $r\ges 0$.

\bl\label{Le-y-cY}\sl Let us assume  the conditions of Proposition \ref{Le-Semi}. Then,  for all $s\ges 0$, $u(\cd)\in \cU_{0,\i}^p$,
$$\widetilde{Y}^u _s\ges \cY^u _s ,\q \dbP\mbox{-a.s.},$$
  where $\cY^u$ satisfies   the following infinite horizon BSDE with jumps,
\bel{hat-Y}\ba{ll}
 \ds  \cY^u _s  = \cY^u _T  +\!\int_s^T\!\!\[ f \big( X^{x_\d;u}_r,\cY^u_r -\cC_1(r)-K\d(1-\d)\cC_2(r)^2,\cZ^u_r ,\int_E \cK^u_r(e)\rho(e)\l(\mathrm de),   u_r  \big)\\
\ns\ds \qq\ -\ell_x|\widetilde X_r- X^{x_\d;u}_r|-\ell_y\big(\cC_1(r)+K\d(1-\d)\cC_2(r)^2\big)- K \d(1-\d) \big(| \h X^u _r |^2+|\h  Z ^{ u }_r |^2+|\h  K ^{ u }_r(\cd) |_{\l,2}^2\big)\]\mathrm dr\!\!\!\!  \\
\ns\ds\qq \  -\int_s^T\cZ^u _r  \mathrm dB_r-\int_s^T\int_E \cK^u _r (e) \ti\mu(\mathrm dr,\mathrm de),\q \mbox{ for all } 0\les s\les T<\i,
   \ea \ee
with   $\cC_1(r):= |\widetilde X_r-X_r^{x_\d;u}|$, $\cC_2(r):= | \h X^u _r |$, $r\ges 0$.

\el

 Before the proof, we remark that, for every  $u(\cd)\in \cU_{0,\i}^p$, BSDE \eqref{hat-Y} admits  under   the conditions of Proposition \ref{Le-Semi}  a unique $\dbF$-adapted solution.

\begin{proof}
Using {\bf (D2)}, the Lipschitz continuity of $f$ with respect to its variables as well as Lemma \ref{Le-Yp}-(iii), we have that there is a constant $K>0$ such that  for all $r\ges 0$,
$$\ba{ll}
 \ds \d f\big(X^{x_1;u }_r,Y^{x_1;u }_r,Z^{x_1;u }_r ,\int_EK^{x_1;u }_r(e)\rho(e)\l(\mathrm de),  u_r  \big)  +(1-\d)f\big(X^{x_2;u }_r,Y^{x_2;u }_r,Z^{x_2;u }_r ,\int_EK^{x_2;u }_r(e)\rho(e)\l(\mathrm de),  u_r \big)\\
\ns\ds \ges f \big(\widetilde X_r ,\widetilde Y_r ,\widetilde Z_r ,\int_E\widetilde K_r(e)\rho(e)\l(\mathrm de),   u_r  \big)-K\d(1-\d)\(|\h X^u _r |^2+|\h  Y ^{ u }_r |^2+|\h  Z ^{ u }_r |^2+| \rho(\cd)|_{\l,2}^2\cd|\h  K ^{ u }_r(\cd)|_{\l,2}^2\)\\
\ns\ds \ges f \big( X^{x_\d;u}_r,\widetilde Y_r -\cC_1(r)-K\d(1-\d)\cC_2(r)^2,\widetilde Z_r ,\int_E\widetilde K_r(e)\rho(e)\l(\mathrm de),   u_r  \big)-\ell_x|\widetilde X_r^u- X^{x_\d;u}_r|\\
\ns\ds \q -\ell_y\(\cC_1(r)+K\d(1-\d)\cC_2(r)^2\)-K \d(1-\d) \(| \h X^u _r |^2+|\h  Z ^{ u }_r |^2+|\h  K ^{ u }_r(\cd) |_{\l,2}^2\).
\ea$$
The stated result can be concluded    by using the comparison theorem (Theorem \ref{Th-Com}).

\end{proof}

From now on, the constant $K$ is that used  in BSDE \eqref{hat-Y}.
We introduce  $\cD_r:=\cC_1(r)+K\d(1-\d)\cC_2(r)^2 $ and $\sY^u_r:=\cY^u_r-\cD_r , $ $r\ges 0$. Then, from
  \eqref{hat-Y}  we get
\bel{equ-sY}\ba{ll}
 \ds  \sY^u_s =\sY^u_T+ \int_s^T\[f \big( X^{x_\d;u}_r,\sY^u_r , \cZ^u_r ,\int_E  \cK^u_r(e)\rho(e)\l(\mathrm de),   u_r  \big)-\ell_x|\widetilde X_r^u- X^{x_\d;u}_r|-\ell_y\cD_r \\
\ns\ds \qq\qq\qq\qq  -K \d(1-\d)\big(|\h X^u _r |^2+ |\h  Z ^{ u }_r |^2+|\h  K ^{ u }_r(\cd) |_{\l,2}^2\big) \] \mathrm dr \\
 \ds\qq -\int_s^T \cZ^u_r \mathrm dB_r-\int_s^T\int_E\cK^u _r (e) \ti\mu(\mathrm dr,\mathrm de),\q \mbox{ for all } 0\les s\les T<\i.
 \ea \ee
%
% \bl\label{Le-X-4} \sl Assuming {\bf (C1)$_p$$_4$},   {\bf (D1)} and   $\a_{b,4}  >0$, for any  $u(\cd)\in \cU_{0,\i}^4 $, $t\ges 0$,  there exists a constant $K>0$ such that, $\dbP$-a.s.,
%\bel{SDE-4}   \dbE ^{\cF_t}\[\sup\limits_{s\in[t,\i)}|\widetilde{X} _s^u -X^{x_\d;u}_s |^2+  \int_t^\i| \widetilde{X} _r^u -X^{x_\d;u}_r |^2\mathrm dr \] \les  K\(|\widetilde{X} _r^u-X^{x_\d;u}_t  |^2+   \d^2( 1-\d )^2 |\h X^u _t |^4\).
%   \ee
% \el
%
 Note that, under the conditions of Proposition \ref{Le-Semi}, from \eqref{SDE-(i)}-(iii) and Lemma \ref{Le-X-4},  we get some $C_1>0$ such that
 \bel{D-r}\ba{ll}
 \ds \dbE ^{\cF_t}\[\int_t^\i |\cD_r|^2\mathrm dr\]\les  2\dbE ^{\cF_t}\[\int_t^\i ( |\widetilde{X} _{r}^u-X_r^{x_\d;u}|^2+ K^2\d^2(1-\d)^2| \h X^u _r |^4)\mathrm d r\]\\
 \ns\ds \les  C_1\(|\widetilde{X} _t^u-X^{x_\d;u}_t  |^2+   \d^2( 1-\d )^2 |\h X^u _t |^4\) ,\q \dbP\mbox{-a.s.}\ea\ee
%

%Based on the above results, we have the following result.

\bl\label{Le-y-y-4}\sl Under the assumptions of Proposition \ref{Le-Semi},
we get a constant $\k>0$ such that, for any $u(\cd)\in \cU_{0,\i}^p$,
$$  |\sY^u _0 -Y^{x_\d;u}_0| \les   \k\d  (1-\d)  |x_1-x_2|^2.
 $$

\el

\begin{proof}
First,
% $$\ba{ll}
% \ns\ds \sY^u _s -Y^{x_\d;u}_s =    \int^\i_s\(f \big( X^{x_\d;u}_r,\sY^u_r ,\cZ^u_r ,\int_E \cK^u_r(e)\rho(e)\l(\mathrm de),   u_r  \big)-f\big(X^{x_\d;u}_s,Y^{x_\d;u}_s,Z^{x_\d;u}_s,\int_E K^{x_\d;u}_s(e)\rho(X^{x_\d;u}_s,e)\l(\mathrm{d}e),u_s \big)\\
% %
% \ns\ds\qq\qq\qq -L_x|\widetilde{X} _{r}^u- X^{x_\d;u}_r|-l_{fy}\cD_r -K'\d(1-\d)\big( |\h X^u _r |^2 +|\h  Z ^{ u }_r |^2+|\h  K ^{ u }_r(\cd)\rho(\cd)|_{\l,2}^2\big) \)\mathrm dr \\
%%
%\ns\ds \qq\qq\qq- \int_s^\i(\cZ^u _r-Z^{x_\d;u}_r) \mathrm dB_r- \int_s^\i(\cK^u _r(e)-K^{x_\d;u}_r(e))\ti\m(\mathrm dr, \mathrm de).
% \ea $$
%
 by {applying} It\^o's formula to $|\sY^u _s -Y^{x_\d;u}_s|^2$, we have, for all $T>s\ges 0$ and $\e>0$,
  $$\ba{ll}
 \ds |\sY^u _s -Y^{x_\d;u}_s|^2+ \dbE^{\cF_s}\[\int_s^T\big(| \cZ^u _r-Z^{x_\d;u}_r |^2 +| \cK^u _r(\cd)-K^{x_\d;u}_r(\cd)|^2_{\l,2} \big)\mathrm dr \] \\
  \ns\ds =  \dbE^{\cF_s}\[|\sY^u _T -Y^{x_\d;u}_T|^2\\
  \ns\ds\q  + 2 \int^T_s  (\sY^u_r -Y^{x_\d;u}_r)\(  -\ell_x|\widetilde{X} _{r}^u- X^{x_\d;u}_r|-\ell_y\cD_r-K \d(1-\d)\big(|\h X^u _r |^2  + |\h  Z ^{ u }_r |^2+|\h  K ^{ u }_r(\cd) |_{\l,2}^2\big)  \\
 \ns\ds \q +      f \big( X^{x_\d;u}_r,\sY^u_r ,\cZ^u_r ,\int_E\cK^u_r(e)\rho(e)\l(\mathrm de),   u_r  \big)-f\big(X^{x_\d;u}_r,Y^{x_\d;u}_r,Z^{x_\d;u}_r,\int_E K^{x_\d;u}_r(e)\rho( e)\l(\mathrm{d}e),u_r\big) \)\mathrm  dr\]\\
 %
% \ns\ds = \dbE|\sY^u _T -Y^{x_\d;u}_T|^2 \\
% %
% \ns\ds +  \dbE\int^T_s\lan \sY^u_r -Y^{x_\d;u}_r, f \big( X^{x_\d;u}_r,\sY^u_r ,\cZ^u_r ,\int_E\cK^u_r(e)\rho(e)\l(\mathrm de),   u_r  \big)-f\big(X^{x_\d;u}_r,Y^{x_\d;u}_r,Z^{x_\d;u}_r,\int_E K^{x_\d;u}_r(e)\rho(X^{x_\d;u}_r,e)\l(\mathrm{d}e),u_s\big)\\
% %
% \ns\ds\qq\q  -L_x|\widetilde{X} _{r}^u- X^{x_\d;u}_r|-l_{fy}\cD_r-K'\d(1-\d)\big(|\h X^u _r |^2  + |\h  Z ^{ u }_r |^2+|\h  K ^{ u }_r(\cd)\rho(\cd)|_{\l,2}^2\big)  \ran dr\\
% %
  %\ns\ds = \dbE^{\cF_s}|\sY^u _T -Y^{x_\d;u}_T|^2 \\
% %
% \ns\ds\q   +  2\dbE^{\cF_s}\int^T_s\lan \sY^u_r -Y^{x_\d;u}_r,f \big( X^{x_\d;u}_r,\sY^u_r ,\cZ^u_r ,\int_E\cK^u_r(e)\rho(e)\l(\mathrm de),   u_r  \big)-f\big(X^{x_\d;u}_r,Y^{x_\d;u}_r,\cZ^u_r ,\int_E\cK^u_r(e)\rho(e)\l(\mathrm de),   u_r\big)  \ran \mathrm dr\\
% %
% \ns\ds\q  +  2\dbE^{\cF_s}\int^T_s\lan \sY^u_r -Y^{x_\d;u}_r,f \big( X^{x_\d;u}_r,Y^{x_\d;u}_r ,\cZ^u_r ,\int_E\cK^u_r(e)\rho(e)\l(\mathrm de),   u_r  \big)-f\big(X^{x_\d;u}_r,Y^{x_\d;u}_r,Z^{x_\d;u}_r,\int_E K^{x_\d;u}_r(e)\rho(X^{x_\d;u}_r,e)\l(\mathrm{d}e),u_r\big)\ran dr\\
% %
% \ns\ds\q   +  2\dbE^{\cF_s}\int^T_s\lan \sY^u_r -Y^{x_\d;u}_r, -L_x|\widetilde{X} _{r}^u- X^{x_\d;u}_r|-l_{fy}\cD_r -K'\d(1-\d)\big( |\h X^u _r |^2+|\h  Z ^{ u }_r |^2+|\h  K ^{ u }_r(\cd)\rho(\cd)|_{\l,2}^2\big)  \ran \mathrm dr\\
%   %
     \ns\ds\les  \dbE^{\cF_s}\[|\sY^u _T -Y^{x_\d;u}_T|^2   -2\a_f \int^T_s|\sY^u_r -Y^{x_\d;u}_r|^2 \mathrm dr\\
 \ns\ds\q  + 2  \int^T_s|\sY^u_r -Y^{x_\d;u}_r| \cd\(\ell_z|\cZ^u_r-Z^{x_\d;u}_r|+ \ell_k\int_E|\cK^u_r(e)-K^{x_\d;u}_r(e)|\cd|\rho(e)|\l(\mathrm de)   \)\mathrm dr\\
 %2
 \ns\ds\q  + 2  \int^T_s( \sY^u_r -Y^{x_\d;u}_r)\cd\(-\ell_x|\widetilde{X} _{r}^u- X^{x_\d;u}_r|-\ell_y\cD_r -K\d(1-\d) \big(|\h X^u _r |^2 + |\h  Z ^{ u }_r |^2+|\h  K ^{ u }_r(\cd) |_{\l,2}^2\big)  \)\mathrm  dr\].
 \ea $$
Observe that, by using Lemma \ref{Le-Yp}-(iii) for the case $p>4$, we get, for $ \vartheta>0,$   $1<\m<2,$ and  $\n>2$ with $\frac 1 \m+\frac 1 \n=1 $,
$$\ba{ll}  \ds 2K\d(1-\d)\dbE^{\cF_s}\[  \int^T_s( \sY^u_r -Y^{x_\d;u}_r)   (|\h  Z ^{ u }_r |^2+|\h  K ^{ u }_r(\cd) |_{\l,2}^2   )\mathrm  dr\]\\
  \ns\ds\les   2K\d(1-\d)\dbE^{\cF_s}\[ \sup\limits_{r\in[s,T]}| \sY^u_r -Y^{x_\d;u}_r| \int^T_s   (|\h  Z ^{ u }_r |^2+|\h  K ^{ u }_r(\cd) |_{\l,2}^2   )\mathrm  dr\]\\
    \ns\ds\les   2K\d(1-\d)\(\dbE^{\cF_s}\[ \sup\limits_{r\in[s,T]}| \sY^u_r -Y^{x_\d;u}_r|^\m\]\)^\frac 1\m\Bigg(\!\! \(\dbE^{\cF_s}\[\(\int^T_s   \! |\h  Z ^{ u }_r |^2  \mathrm  dr\)^\n\]\)^\frac 1\n+\(\dbE^{\cF_s}\[\(\int^T_s \!   |\h  K ^{ u }_r(\cd) |_{\l,2}^2    \mathrm  dr\)^\n\]\)^\frac 1\n\! \Bigg)\\
    \ns\ds\les  \vartheta\(\dbE^{\cF_s}\[ \sup\limits_{r\in[s,T]}| \sY^u_r -Y^{x_\d;u}_r|^\m\]\)^\frac 2\m\\
    \ns\ds\q +C_\vartheta\d^2(1-\d)^2 \Bigg(\!\! \(\dbE^{\cF_s}\[\(\int^T_s    |\h  Z ^{ u }_r |^2  \mathrm  dr\)^\n\]\)^\frac 2\n+\(\dbE^{\cF_s}\[\(\int^T_s    |\h  K ^{ u }_r(\cd) |_{\l,2}^2    \mathrm  dr\)^\n\]\)^\frac 2\n\!\Bigg)\\
    \ns\ds\les  \vartheta\(\dbE^{\cF_s}\[ \sup\limits_{r\in[s,T]}| \sY^u_r -Y^{x_\d;u}_r|^\m\]\)^\frac 2\m+C_\vartheta \d^2(1-\d)^2 |\h X^u _s|^4 .
  \ea$$
 Consequently, from our above estimate we deduce with the help of  Lemma \ref{Le-X-4} that
 $$\ba{ll}
 \ds |\sY^u _s -Y^{x_\d;u}_s|^2+ \dbE^{\cF_s}\[\int_s^T\big(| \cZ^u _r-Z^{x_\d;u}_r |^2 +| \cK^u _r(\cd)-K^{x_\d;u}_r(\cd)|^2_{\l,2} \big)\mathrm dr \] \\
    \ns\ds\les  \dbE^{\cF_s}\[|\sY^u _T -Y^{x_\d;u}_T|^2   -(2\a_f-
    \ell_z^2-\ell_k^2|\rho(\cd)|_{\l,2}^2-7\e)  \int^T_s|\sY^u_r -Y^{x_\d;u}_r|^2 \mathrm dr \\
 \ns\ds\q  +   \int^T_s \(\frac{\ell_z^2}{\ell_z^2+\e}|\cZ^u_r-Z^{x_\d;u}_r|^2+ \frac{\ell_k^2|\rho(\cd)|_{\l,2}^2}{\ell_k^2|\rho(\cd)|_{\l,2}^2+\e} |\cK^u_r(\cd)-K^{x_\d;u}_r(\cd)|^2_{\l,2}  +\frac{\ell_x^2}{\e}|\widetilde{X} _{r}^u- X^{x_\d;u}_r|^2\\
 \ns\ds\qq\qq\q  +\frac{\ell_y^2}{\e}|\cD_r|^2 +\frac{(K\d)^2(1-\d)^2}{\e} |\h X^u _r |^4 \)\mathrm dr \]+ \vartheta\(\dbE^{\cF_s} \[\sup\limits_{r\in[s,T]}|\sY^u_r -Y^{x_\d;u}_r|^\m\]\)^\frac2\m   +C_\vartheta\d^2(1-\d)^2  |\h X^u _s|^4 \\
 %
   % \ns\ds\les  \dbE^{\cF_s}|\sY^u _T -Y^{x_\d;u}_T|^2   -(2\a_f-L_z ^2-L_k^2C_\rho^2-7\e) \dbE^{\cF_s}\int^T_s|\sY^u_r -Y^{x_\d;u}_r|^2 \mathrm dr\\
% %
% \ns\ds\q  +  \dbE^{\cF_s}\int^T_s \(\frac{L_z ^2}{L_z ^2+\e}|\widetilde Z_r-Z^{x_\d;u}_r|^2+ \frac{L_k^2C_\rho^2}{L_k^2C_\rho^2+\e} |\widetilde K_r(\cd)-K^{x_\d;u}_r(\cd)|^2_{\l,2}  \)dr + C  \dbE^{\cF_s} [|\cD_T|^2]  \\
%  %
%\ns\ds\q  + 2\e\(\dbE^{\cF_s} \[\sup\limits_{r\in[s,T]}|\sY^u_r -Y^{x_\d;u}_r|^p\)^\frac2p+ \frac1\e K^2\d^2(1-\d)^2 \(\dbE^{\cF_s}\[\int^T_s  |\h  Z ^{ u }_r |^2  \mathrm  dr\]^ q\)^\frac 2q \\
%  %
%\ns\ds\q + \frac1\e K^2\d^2(1-\d)^2 \(\dbE^{\cF_s}\[\int^T_s  |\h  K ^{ u }_r(\cd) |_{\l,2}^2  \mathrm  dr\]^ q\)^\frac 2q\\
% %
    \ns\ds  \les  \dbE^{\cF_s}[|\sY^u _T -Y^{x_\d;u}_T|^2]   -(2\bar\a_{f} -7\e) \dbE^{\cF_s}\[\int^T_s|\sY^u_r -Y^{x_\d;u}_r|^2 \mathrm dr\]\\
 \ns\ds\q   +  \dbE^{\cF_s}\[\int^T_s \(\frac{\ell_z^2}{\ell_z^2+\e}|\cZ^u_r-Z^{x_\d;u}_r|^2+ \frac{\ell_k^2|\rho(\cd)|_{\l,2}^2}{\ell_k^2|\rho(\cd)|_{\l,2}^2+\e} |\cK^u_r(\cd)-K^{x_\d;u}_r(\cd)|^2_{\l,2} + C_{\e,\d}|\cD_r|^2\)\mathrm dr \]\\
\ns\ds\q   + \vartheta\(\dbE^{\cF_s} \[\sup\limits_{r\in[s,T]}|\sY^u_r -Y^{x_\d;u}_r|^\m\]\)^\frac2\m +C_\vartheta  \d^2(1-\d)^2|\h X^u _s|^4+ { C|\widetilde X_s-X_s^{x_\d;u}|^2},
 \ea $$
where $\bar\a_f$   is defined in {\bf (C2)}.

 By choosing $\e>0$ sufficiently small to ensure $2\bar\a_{f} -7\e>0$,  we get,    for all $s\in[0,T]$,
  $$\ba{ll}
 \ds |\sY^u _s -Y^{x_\d;u}_s|^2+  \dbE^{\cF_s}\[\int^T_s\(|\sY^u_r -Y^{x_\d;u}_r|^2  + | \cZ^u _r-Z^{x_\d;u}_r |^2  +| \cK^u _r(\cd)-K^{x_\d;u}_r(\cd)|^2_{\l,2}\) \mathrm dr \] \\
    \ns\ds\les C\(  \d^2(1-\d)^2 |\h X^u _s|^4+ {  |\widetilde{X} _{s}^u-X_s^{x_\d;u}|^2}+  \dbE^{\cF_s}\[|\sY^u _T -Y^{x_\d;u}_T|^2+   \int^T_0|\cD_r|^2\mathrm dr\]  \\
     \ns\ds\qq +  \vartheta\(\dbE^{\cF_s} \[\sup\limits_{r\in[0,T]}|\sY^u_r -Y^{x_\d;u}_r|^\m\]\)^\frac2\m  \).
 \ea $$
%p
Furthermore, using Doob's martingale inequality and  \eqref{D-r},
 we have
  $$\ba{ll}
 \ds \dbE\[\sup\limits_{s\in[0,T]}|\sY^u _s -Y^{x_\d;u}_s|^2\] \\
 \ns\ds \les  C\(  \dbE\[ \d^2(1-\d)^2\sup\limits_{s\in[0,T]}|\h X^u _s|^4+ { \sup\limits_{s\in[0,T]} |\widetilde{X} _{s}^u-X_s^{x_\d;u}|^2}+|\sY^u _T -Y^{x_\d;u}_T|^2     +  \int^T_0|\cD_r|^2\mathrm dr\]\\
 \ns\ds \qq\qq + \vartheta\dbE\[\sup\limits_{s\in[0,T]}\(\dbE^{\cF_s} \[\sup\limits_{r\in[0,T]}|\sY^u_r -Y^{x_\d;u}_r|^\m\)^\frac2\m\]\) \\
    \ds\les C\(\d^2(1-\d)^2 |x_1-x_2|^4  + \dbE [|\sY^u _T -Y^{x_\d;u}_T|^2  ]+    \vartheta\(\frac 2{2-\m}\)^\frac2\m\dbE\[\sup\limits_{r\in[0,T]} |\sY^u_r -Y^{x_\d;u}_r|^2\]\).
 \ea $$
 Therefore, by choosing $\vartheta$ sufficently small and applying Lemma \ref{Le-X-4},  we see that for  some constant $\k>0$ it holds 
$$  |\sY^u _0 -Y^{x_\d;u}_0|^2\les   \dbE\[\sup\limits_{s\in[0,T]}|\sY^u _r -Y^{x_\d;u}_r|^2\]\les \k^2\( \dbE [|\sY^u _T -Y^{x_\d;u}_T|^2 ]  +   \d^2(1-\d)^2 |x_1-x_2|^4\).
  $$
  Applying  Lemma \ref{Le-Yp} to $\sY^u$ and $Y^{x_\d;u}$, we also have    $ \dbE [|\sY^u _T|^2 ] \to 0$ and $ \dbE [| Y^{x_\d;u}_T|^2 ]   \to 0$, as $T\to\i$.
Hence, letting $T\to\i$ in the above inequality, we get the stated result.

\end{proof}

With the above results, we can now  complete the proof of  Proposition  \ref{Le-Semi}.

\ss

 \no{\bf Proof of  Proposition \ref{Le-Semi}}
  We just need to prove  $-V_p(\cd)$ is semi-concave.
By denoting $\widetilde V_p(\cd):=-V_p(\cd)$, we have  $\widetilde V_p(x):=\inf\limits_{u(\cd)\in\cU_{0,\i}^p}(-Y_0^{x;u})$, $x\in\dbR^n$.
Then, for all $x_1,x_2\in\dbR^n$, $\delta\in[0,1]$ and,
 for arbitrarily small  $\e>0$, there exists some $u^\e(\cd)\in \cU_{0,\i}^p$, such that
 \bel{equmm}  \d\widetilde V_p(x_1)+(1-\d)\widetilde V_p(x_2)-\widetilde V_p(x_\d ) \les -\d   Y_0^{x_1;u^\e} -(1-\d)Y_0^{x_2;u^\e}+Y_0^{x_\d;u^\e} +\e=-\widetilde Y_0^{u^\e} +Y_0^{x_\d;u^\e} +\e.
 \ee
%p
 From  the    Lemmas \ref{Le-y-cY} and  \ref{Le-y-y-4}, we have that there is a constant $\k>0$ such that
 $$
 Y_0^{x_\d;u^\e} -\ti Y_0^{u^\e}\les  Y_0^{x_\d;u^\e} -\cY_0^{u^\e} = Y_0^{x_\d;u^\e} -\sY_0^{u^\e} -\cD_0\les  \k\d(1-\d)|x_1-x_2|^2.
 $$
Hence, by the arbitrariness of $\e$,
$  V_p(x)$ is semi-convex in $x\in\dbR^n$.
\endpf

\section{Dynamic programming principles and HJB equations with integral-differential operators}

In this section, we will   carry out the research of Problem (OC)$_p$ by employing the approach of   dynamic programming principle (DPP, for short), which shall  associate Problem (OC)$_p$ with  a kind of integral-PDE of HJB type.    The following study can go through with any given $p\ges 2$, so that we shall restrict ourselves to  $p=2$ here,   and we   set $\cU:=\cU_{0,\i}^2$.  All the subscripts $p$ will be  omitted in this section.

\subsection{Dynamic programming principles}

The following is the dynamic programming principle (DPP) of Problem (OC).

\bp\label{DPP}\sl
Under {\bf (C1)}-{\bf (C4)}, for all $x\in\dbR^n$ and  $t\ges 0$, the following holds true,
$$V(x)= \sup\limits_{u(\cd)\in\cU}G_{0,t}^{x;u}[V(X_t^{x;u})].$$

\ep

\br{}\sl Recall  that  $G_{0,t}^{x;u}[\cd]$ in the  above relation is    the backward stochastic  semigroup introduced by Peng \cite{Peng-1997} for  stochastic recursive control problems. For   the convenience of the reader,   we recall its definition, but  we also refer  to \cite{BHL-2011, LP-2009}.
Given the initial state $x\in\dbR^n$  for SDE \eqref{state}, for all $T>0$, $s\in[0,T]$, $\z\in L_{\cF_T}^2(\Omega;\dbR)$ and $u(\cd)\in \cU $,    the   backward  stochastic semigroup  $G^{x;u}_{s,T}[\cd]$  is defined by
 $G^{x;u}_{s,T}[\z]=y_s,$ $ s\in [0,T],$
where  $(y,z)$ is the unique solution of the BSDE:
 \bel{BSDE-BS}\left\{
\ba{ll}
 \ds\!\!\!  \mathrm dy_s=-f(X^{x;u}_s,y_s,z_s,\int_E k_s(e)\rho( e)\l(\mathrm{d}e),u_s)\mathrm ds  +z_s\mathrm dB_s+\int_Ek_s(e)\ti{\m}(\mathrm ds,\mathrm de),\ s\in[0,T],\\
 \ds\!\!\! y_T=\z,
 \ea\right.\ee
with $X^{x;u}$ satisfying \eqref{state}.

\er

Before  presenting the proof of Proposition \ref{DPP},   some preparations and auxiliary results are needed.
First, we construct a measurable metric dynamical system through defining a measurable and measure-preserving shift; we refer  to \cite{LZ-2019, BLZ-2021}.  Let $\th_t:\Omega\to\Omega$, $t\ges 0$, be a measurable mapping on $(\Omega,\cF,\dbP)$  defined by
$$\ba{ll}
 \ds \th_t\circ B_s:=B_{s+t}-B_t,\q s\ges 0,\\
 \ds \th_t\circ \mu\big((r,s]\times A\big):=\mu\big((r+t,s+t]\times A\big),\q r<s,\  A\in\cB(E). \ea$$
 Then, for all $s,t\ges 0$, we have

 (i) $\dbP\cd [\th_t]^{-1}=\dbP$;

 (ii) $\th_0=I$, where $I$ is the identity transformation on $\Omega$;

 (iii) $\th_s\circ \th_t=\th_{s+t}$.
\\
By setting  $B^t_r:= B_{r+t}-B_t$,  $\mu^t\big((r,s]\times A\big):=\mu\big((r+t,s+t]\times A\big)$,  $s>r\ges 0$, $A\in\cB(E)$, we consider the filtration $\dbF^t=(\cF_s^t)_{s\ges 0}$ with $\cF_s^t:=\cF_s^{B^t}\otimes\cF_s^{\m^t}$   augmented by all $\dbP$-null sets.
Let us denote the set  of the  time-shifted admissible controls  by $\cU^t:=\{u^t=\th_t\circ u\mid u(\cd)\in \cU\}$.

Now, for any $(t,x)\in [0,\i)\times\dbR^n$,
we introduce  the following initial time-shifted  version of  \eqref{state},
\bel{state-tx}   X^{t, x;u}_s=x+\int_t^sb(X^{t,x;u}_r,u_r )\mathrm dr+\int_t^s\si(X^{t,x;u}_r,u_r )\mathrm dB_r   +\int_t^s\int_E\g(e,X^{t,x;u}_{r-},u_{r} )\ti{\m}(\mathrm dr,\mathrm de),\ s\ges t ,\!\! \ee
and the corresponding  BSDE with jumps:
 \bel{BSDE-t}
\ba{ll}
 \ds\!\!\!  Y^{t,x;u}_s=Y^{t,x;u}_T+\int_s^Tf\big(X^{t,x;u}_r,Y^{t,x;u}_r,Z^{t,x;u}_r,\int_E K^{t,x;u}_r(e)\rho(e)\l(\mathrm{d}e),u_r\big)\mathrm dr  \\
 \ds\!\!\!\qq\qq-\int_s^TZ^{t,x;u}_r\mathrm dB_r-\int_s^T \int_EK^{t,x;u}_r(e)\ti{\m}(\mathrm dr,\mathrm de),\q t\les s\les T<\i.
 \ea\ee

 \bl{}\sl Under {\bf (C1)}-{\bf (C3)}, for all $t,s\ges 0$, $u(\cd)\in\cU$, we have
   $$\ba{c}
   \ds \th_t\circ X^{x;u}_s=X^{t,x;\widetilde u   }_{s+t},\q \dbP\mbox{-a.s.},\\
\ns\ds (\th_t\circ Y^{x;u}_s,\th_t\circ Z^{x;u}_s,  \th_t\circ K^{x;u}_s(\cd))= (Y^{t,x;\widetilde u  }_{s+t},Z^{t,x;\widetilde u  }_{s+t}, K^{t,x;\widetilde u  }_{s+t}(\cd)),\q \dbP\mbox{-a.s., }\ea$$
where \bel{wt-u} \widetilde u_s:=\left\{\ba{ll}
                   \ds\!\!\! u_0,\qq   s\in[0,t),\\
                    \ns\ds\!\!\! u^t_{s-t},\q  s\in[t,\i),
                    \ea \right. \mbox{ with  }u_0\in U \mbox{ being given arbitrarily}.
 \ee
 \el

 \begin{proof}

For any $t\ges 0$, by applying the transformation $\th_t$ to SDE \eqref{state}, we get
\bel{SDE-2}
 \ba{ll}
 \ds\!\!\! \th_t\circ X^{x;u}_s %=x+\int_0^sb( \th_t\circ X^{0,x;u}_r, \th_t\circ u_r, \th_t\circ v_r)\mathrm dr+\int_0^s \si(\th_t\circ X^{0,x;u}_r,\th_t\circ u_r,\th_t\circ v_r)\mathrm dB_r^t\\
% %
% \ns\ds\qq\qq\q   +\int_0^s\int_E \g(e,\th_t\circ X^{0,x;u}_{r-},\th_t\circ u_r,\th_t\circ v_r)\ti{\m}(\mathrm dr,\mathrm de)\\%
% %
% \ns\ds
 =x+\int_0^sb( \th_t\circ X^{x;u}_r,  u_r^t )\mathrm dr+\int_0^s \si(\th_t\circ X^{x;u}_r,  u_r^t )\mathrm dB_r^t\\
 \ns\ds\qq\qq  +\int_0^s\int_E \g(e,\th_t\circ X^{x;u}_{r-}, u_{r} ^t)\ti{\m}^t(\mathrm dr,\mathrm de),\q s\ges 0.%\\
%  %
% \ns\ds =x+\int_t^{s+t}b( \th_t\circ X^{0,x;u}_{r-t},  u_{r-t}^t,   v_{r-t}^t)\mathrm dr+\int_t^{s+t} \si(\th_t\circ X^{0,x;u}_{r-t},  u_{r-t}^t, v_{r-t}^t)\mathrm dB_r\\
% %
% \ns\ds\qq\qq\q   +\int_t^{s+t}\int_E \g(e,\th_t\circ X^{0,x;u}_{(r-t)-}, u_{r-t}^t )\ti{\m}(\mathrm dr,\mathrm de),

 \ea  \ee
Observe that   %$\widetilde u  _\cd:=u_{\cd-t}^t $ with
%%
%$u_{r-t}^t(\cd)=(\th_t\circ  u)_{r-t}(\cd)  $,     $r\ges t$. So,
$\widetilde u _\cd  \in\cU^t$, and that   $X^{x;\widetilde u   }_{s+t}$ satisfies the following SDE:
   \bel{SDE-2-1}
 \ba{ll}
  \ds\!\!\! X^{t,x;\widetilde u  }_{s+t}=x+\int_t^{s+t} b(X^{t,x;\widetilde u  }_r,\widetilde u _{r} )\mathrm dr+\int_t^{s+t} \si(X^{t,x;\widetilde u }_r,\widetilde u _{r} )\mathrm dB_r +\int_t^{s+t}\int_E\g(e,X^{t,x;\widetilde u  }_{r-},\widetilde u _{r} )\ti{\m}(\mathrm dr,\mathrm de)\\
 \ns\ds  =x+\int_t^{s+t} b(X^{t,x;\widetilde u  }_r,u_{r-t}^t )\mathrm dr+\int_t^{s+t} \si(X^{t,x;\widetilde u  }_r,u_{r-t}^t )\mathrm dB_r  +\int_t^{s+t}\int_E\g(e,X^{t,x;\widetilde u  }_{r-},u_{ r-t }^t )\ti{\m}(\mathrm dr,\mathrm de)\\
\ns\ds =x+\int_0^s b(X^{t,x;\widetilde u  }_{r+t},u_{r}^t )\mathrm dr+\int_0^s \si(X^{t,x;\widetilde u  }_{r+t},u_{r}^t )\mathrm dB_r^t +\int_0^{s}\int_E\g(e,X^{t,x;\widetilde u  }_{(r+t)-},u_{r}^t )\ti{\m}^t(\mathrm dr,\mathrm de),\q s\ges 0.
 \ea  \ee
 From the SDEs  \eqref{SDE-2} and \eqref{SDE-2-1}, by using the uniqueness of the solution of these SDEs, we get  $$\th_t\circ X^{x;u}_s=X^{t,x;\widetilde u   }_{s+t},\q  \dbP\mbox{-a.s.}, \ t,s\ges 0.$$

 Similarly, by applying $\th_t$ on the both sides of  BSDE  \eqref{BSDE-cost}, we have
$$
\ba{ll}
 \ds\!\!\! \th_t\circ Y^{x;u}_s=\th_t\circ Y^{x;u}_T  +\int_s^Tf\(\th_t\circ X^{x;u}_r,\th_t\circ Y^{x;u}_r,\th_t\circ Z^{x;u}_r,\int_E \th_t\circ K^{x;u}_r(e)\rho(e)\l(\mathrm{d}e),u_r^t\)\mathrm dr\\
 \ns\ds\!\!\!\qq\qq\q -\int_s^T\th_t\circ Z^{x;u}_r\mathrm dB_r^t-\int_s^T\int_E\th_t\circ K^{x;u}_r(e)\ti{\m}^t(\mathrm dr,\mathrm de),\q 0\les s\les T<\i.
 \ea$$
On the other hand, from  BSDE \eqref{BSDE-t}  we also  obtain
$$
\ba{ll}
 \ds\!\!\! Y^{t,x;\widetilde u   }_{s+t}=Y^{t,x;\widetilde u   }_{T+t}+\int_{s+t}^{T+t}f (X^{t,x;\widetilde u   }_r,Y^{t,x;\widetilde u   }_r,Z^{t,x;\widetilde u   }_r,\int_EK^{t,x;\widetilde u   }_r(e)\rho(e)\l(\mathrm{d}e),\widetilde u  _r)\mathrm dr\\
\ns\ds\qq\qq\qq -\int_{s+t}^{T+t} Z^{t,x;\widetilde u   }_r \mathrm dB_r-\int_{s+t}^{T+t}\int_EK^{t,x;\widetilde u   }_{r}(e)\ti{\m}(\mathrm dr,\mathrm de)\\
%
%\ns\ds=Y^{t,x;\widetilde u   }_{T+t}+\int_{s+t}^{T+t}f (X^{t,x;\widetilde u   }_r,Y^{t,x;\widetilde u   }_r,Z^{t,x;\widetilde u   }_r,\int_EK^{t,x;\widetilde u   }_r(e)\rho(e)\l(\mathrm{d}e),  u^t_{r-t} )\mathrm dr\\
%%
%\ns\ds\qq\qq\qq -\int_{s+t}^{T+t} Z^{t,x;\widetilde u   }_r \mathrm dB_r-\int_{s+t}^{T+t}\int_EK^{t,x;\widetilde u   }_{r}(e)\ti{\m}(\mathrm dr,\mathrm de)\\
%%
\ns\ds=Y^{t,x;\widetilde u   }_{T+t}+\int_{s}^{T}f (X^{t,x;\widetilde u   }_{r+t},Y^{t,x;\widetilde u   }_{r+t},Z^{t,x;\widetilde u   }_{r+t},\int_EK^{t,x;\widetilde u   }_{r+t}(e)\rho( e)\l(\mathrm{d}e),  u^t_{r} )\mathrm dr\\
\ns\ds\qq\qq\qq -\int_{s}^{T} Z^{t,x;\widetilde u   }_{r+t} \mathrm dB_r^t-\int_{s}^{T}\int_EK^{t,x;\widetilde u   }_{r+t}(e)\ti{\m}^t(\mathrm dr,\mathrm de), \q 0\les s\les T<\i.
\ea$$
The uniqueness of the solution of BSDE with jumps implies   that, for all $t,s\ges 0$,  
$$(\th_t\circ Y^{x;u}_s,\th_t\circ Z^{x;u}_s,  \th_t\circ K^{x;u}_s(\cd))= (Y^{t,x;\widetilde u   }_{s+t},Z^{t,x;\widetilde u   }_{s+t}, K^{t,x;\widetilde u   }_{s+t}(\cd)),\ \dbP\mbox{-a.s.}$$

\end{proof}

\br{}\sl
Note that, for all $x\in\dbR^n$, $V(x)$ is deterministic. Therefore, for all $x\in\dbR^n$, $t\ges 0$,
\bel{W-th}
V(x)= \th_t\circ V(x)= \esssup_{u(\cd)\in\cU }\big(\th_t\circ Y_0^{x;u} \big)=\esssup_{u(\cd)\in\cU } Y^{t,x; \widetilde u   }_{t}=\esssup_{u(\cd)\in\cU^t } Y^{t,x; u   }_{t} .\ee
%.$$
%
\er

Now we   present the proof of Proposition  \ref{DPP}.
\ss

\no{\bf Proof of Proposition \ref{DPP}} To simplify the  notation, we
 set  $\bar V(x):= \sup\limits_{u(\cd)\in\cU }G_{0,t}^{x;u}[V(X_t^{x;u})].$

  First,  we prove $\bar V(x)\les V(x)$, $x\in\dbR^n$.
Obviously, for any $\e>0$, there exists some $u^\e(\cd)\in\cU$ such that
$$\bar V(x)\les   G_{0,t}^{x;u^\e}[V(X_t^{x;u^\e})]+\e.$$
On the other hand, from  \eqref{W-th}, by using a standard argument (see, eg.,   \cite{Peng-1997}  or \cite{BHL-2011}), we get %we know, for any $y\in\dbR^n$,
%%
%$$V(y)= \esssup_{u(\cd)\in\cU} Y^{t,y;  \widetilde u    }_{t} .$$
%
$$V(X_t^{x;u^\e})= \esssup_{u(\cd)\in\cU} Y^{t,y; \widetilde u}_{t}|_{y=X_t^{x;u^\e}}=\esssup_{u(\cd)\in\cU} Y^{t,X_t^{x;u^\e}; \widetilde u}_{t}
%= \esssup_{v \in\cV}Y^{0,x;(\a \oplus \a^t)(v^\e\oplus_t\ti{v}^t),v^\e\oplus_t\ti{v}^t}_{t},
$$
(Recall \eqref{wt-u} for the definition of $\widetilde u$ given $u\in\cU$). 
\\
To estimate $\esssup\limits_{u(\cd)\in\cU} Y^{t,X_t^{x;u^\e}; \widetilde u }_{t}$, we note that there exists some sequence $\{u_j\}_{j\ges 1}\subset\cU$ such that
$$\esssup\limits_{u(\cd)\in\cU} Y^{t,X_t^{x;u^\e}; \widetilde u }_{t}=\sup\limits_{j\ges 1}Y^{t,X_t^{x;u^\e}; \widetilde u_j }_{t},\q  \dbP\mbox{-a.s.}
$$
For $j\ges 1$,  by setting $$\ti\D_j:=\Big\{  \esssup\limits_{u(\cd)\in\cU} Y^{t,X_t^{x;u^\e}; \widetilde u }_{t}\les Y^{t,X_t^{x;u^\e}; \widetilde u_j }_{t}+\e\Big\}\in \cF_t,
$$
we   construct an $(\Omega,\cF_t)$-partition $\{\D_j\}_{j\ges 1}$ with $\D_1=\ti\D_1$, $\ds \D_j:=\tilde\D_j\backslash ( \cup_{i=1}^{j-1}\ti\D_j)$, $j\ges 2$.

Defining  $\ds\mathbbm{u}_\e:=\sum_{j\ges1}u_jI_{ \D_j}$, we have $\mathbbm{u}_\e(\cd)\in\cU$ and
$
Y^{t,X_t^{x;u^\e}; \widetilde { {\mathbbm{u}}}_\e }_{t}=\sum\limits_{j=1}^\i Y^{t,X_t^{x;u^\e}; \widetilde u_j}_{t}I_{ \D_j}
 $,  where $\widetilde { {\mathbbm{u}}}_\e $ is defined  by    \eqref{wt-u} given $  \mathbbm{u}_\e\in\cU$. 
Therefore,
$$ V(X_t^{x,u^\e})= \esssup_{u(\cd)\in\cU} Y^{t,X_t^{x;u^\e}; \widetilde u }_{t} \les Y^{t,X_t^{x;u^\e};\widetilde { {\mathbbm{u}}}_\e }_{t}+\e=Y^{0,x; \mathbf{u}^\e}_{t}+\e ,
 $$
  where  $\mathbf{u}^\e_s :=(u^\e\oplus_t\widetilde {\mathbbm{u}}_\e)_s:=\left\{\ba{ll}\ns\ds\!\!\! u^\e_s,\qq\q \qq\qq  \qq  \ \ s\in[0,t),\\
 \ns\ds\!\!\!  (\mathbbm{u}_\e)_{s-t}^t=(\th_t\circ \mathbbm{u}_\e)_{s-t}, \q  s\in[t,\i).\ea\right.$
Note that $\mathbf{u}^\e(\cd) \in\cU$.
%
%$\a(v^\e\oplus\tilde{ \mathbbm{v}}_\e^t)=(\a\oplus\a^t)(v^\e\oplus\tilde{ \mathbbm{v}}_\e^t)=\a(v^\e)\oplus \a^t ( \ti{ \mathbbm{v}}_\e^t)$.

 %

% In fact, for any $s\in[0,t)$, we have
%$\a(v^\e\oplus\tilde{ \mathbbm{v}}_\e^t)_s= \a (v^\e_s) = \a\oplus\a^t)(v^\e\oplus\tilde{ \mathbbm{v}}_\e^t)_s$.
%%
%%
%For any $s\in[t,\i)$,
%$\a(v^\e\oplus\tilde{ \mathbbm{v}}_\e^t)_s=\a( \tilde{ \mathbbm{v}}_\e^t)_ s=\a(   \mathbbm{v}_\e^t(s-t)).$
%%
%And $ (\a\oplus\a^t)(v^\e\oplus\tilde{ \mathbbm{v}}_\e^t)_s=  \a^t ( \ti{ \mathbbm{v}}_\e^t)_s=\a^t( \th_t\circ  \mathbbm{v}_\e)_{s-t}=(\th_t\circ\a(\mathbbm{v}_\e))_{s-t}=\a(   \mathbbm{v}_\e^t(s-t))$.
%

Thus, by the continuous dependence of the solutions of BSDEs with jumps, we get
$$\ba{ll}
\ds \bar V(x) \les  G_{0,t}^{x;u^\e}[V(X_t^{x;u^\e})]+\e  \les G_{0,t}^{x;u^\e}[Y^{0,x;\mathbf{u}^\e}_{t}+\e]+\e   \les  Y^{0,x;\mathbf{u}^\e}_{0}+C\e   \les \sup\limits_{u(\cd)\in\cU}  Y^{x; u}_{0}+C\e .\ea$$
Due to the arbitrariness  of $\e>0$, we get
$  \bar V(x) \les      \sup\limits_{u(\cd)\in\cU}  Y^{x; u}_{0}  =V (x),$ $ x\in\dbR^n.  $

 \ms

Let us now  prove $\bar V(x)\ges V(x)$, $x\in\dbR^n$. By the definition of  $\bar V(\cd)$, for all $x\in\dbR^n$, $u(\cd)\in\cU$,
 $$\bar V(x)\ges  G_{0,t}^{x;u}[V(X_t^{x;u})] .$$
%%
%By \eqref{W-th}, for any $u(\cd)\in\cU$, we know,
%%
%$$V(y)= \esssup_{u(\cd)\in\cU} Y^{t,y; \ti {u}^t}_{t}\ges  Y^{t,y; \ti {u}^t}_{t}.$$
%%
Let $\{\L_i\}_{i\ges 1}\subset\cB(\dbR^n)$ be a decomposition of $\dbR^n$ such that $\sum\limits_{i\ges 1}\L_i=\dbR^n$ and $\diam{(\L_i)}<\e$, $i\ges 1$. For arbitrarily  chosen but fixed  $y_i\in \L_i$, $i\ges 1$, we define $[X_t^{x;u}]:=\sum\limits_{i\ges1}y_i1_{\{X_t^{x;u}\in\L_i\}}$. Then, for all   $u(\cd)\in\cU$,
 $$| X_t^{x;u}-[X_t^{x;u}]|\les \e,   \mbox{ everywhere on }\Omega.$$

Moreover, by \eqref{W-th}, using the comparison of BSDEs with jumps and Lemma \ref{W-Lip}, we have, for all $x\in\dbR^n$, $u(\cd)\in\cU$,
 $$\ba{ll}
 \ds \bar V(x) \ges G_{0,t}^{x;u}\big[V(X_t^{x;u})\big]  \ges G_{0,t}^{x;u}\big[V([X_t^{x;u}])-C\e\big]  \\
 \ns\ds   \ges G_{0,t}^{x;u}\[\sum\limits_{i\ges1} 1_{\{X_t^{x;u}\in\L_i\}} V(y_i) \]-C\e = G_{0,t}^{x;u}\[\sum\limits_{i\ges1} 1_{\{X_t^{x;u}\in\L_i\}}\esssup\limits_{v\in\cU }Y_0^{y_i,v} \]-C\e \\
  \ns\ds = G_{0,t}^{x;u}\[\sum\limits_{i\ges1} 1_{\{X_t^{x;u}\in\L_i\}}\esssup\limits_{v\in\cU }Y_t^{t,y_i,\tilde v} \]-C\e = G_{0,t}^{x;u}\[ \esssup\limits_{v\in\cU }Y_t^{t,[X_t^{x;u}],\tilde v} \]-C\e\\
 \ns\ds \ges   G_{0,t}^{x;u}\[ \esssup\limits_{v\in\cU }Y_t^{t, X_t^{x;u} ;\tilde v} -C\e\]-C\e\ges G_{0,t}^{x;u}\[ \esssup\limits_{v\in\cU }Y_t^{ x;u\oplus\tilde v} \]-C\e\\
 \ns\ds%\ges   G_{0,t}^{x;u}\[ \esssup\limits_{\tilde v\in\cU }Y_t^{ x;u\oplus\tilde v} \]-C\e
 \ges     G_{0,t}^{x;u}\[   Y_t^{ x;u} \]-C\e = Y^{x;u }_{0}-C\e.\ea$$
%%
%
% $$\ba{ll}
% \ns\ds \bar V_p(x) \ges G_{0,t}^{x;u}\big[V(X_t^{x;u})\big]  \ges G_{0,t}^{x;u}\big[V([X_t^{x;u}])-C\e\big]  \\
% %
% \ns\ds   = G_{0,t}^{x;u}\[\sum\limits_{i\ges1} 1_{\{X_t^{x;u}\in\L_i\}} V(y_i) \]-C\e \ges   G_{0,t}^{x;u}\[\sum\limits_{i\ges1} 1_{\{X_t^{x;u}\in\L_i\}} Y^{t,y_i;  \ti{v}  ^t}_{t}  \]-C\e\\
% %
% \ns\ds=  G_{0,t}^{x;u}\[  Y^{t,[X_t^{x;u}];  \ti{v}^t}_{t} \]-C\e\ges  G_{0,t}^{x;u}\[  Y^{t, X_t^{x;u} ;  \ti{v}^t}_{t} \]-C\e \\
% %
% \ns\ds=    G_{0,t}^{x;u}\[ \esssup_{\tilde v\in\cU} \((\th_t\circ Y^{ y;    v}_0)\mid_{y=X_t^{x;u}} \)\]-C\e\\
% %
% \ns\ds=    G_{0,t}^{x;u}\[ \esssup_{  v\in\cU} \((\th_t\circ Y^{ y;     v}_0)\mid_{y=X_t^{x;u}} \)\]-C\e\\
% %
% \ns\ds=    G_{0,t}^{x;u}\[ V(X_t^{x;u}) \]-C\e\\
% %
% \ns\ds=    G_{0,t}^{x;u}\[ \(\esssup_{ v\in\cU} Y^{ y;    v}_t\) \mid_{y=X_t^{x;u}}\]-C\e=    G_{0,t}^{x;u}\[ \(\esssup_{\tilde v\in\cU} Y^{t,y;    \tilde v}_t\) \mid_{y=X_t^{x;u}}\]-C\e\\
% %
% \ns\ds=    G_{0,t}^{x;u}\[ \esssup_{\tilde v\in\cU} Y^{t,X_t^{x;u};    \tilde v}_t \]-C\e=    G_{0,t}^{x;u}\[ \esssup_{\tilde v\in\cU} Y^{0,x;  u\oplus \tilde v}_t \]-C\e\ges    G_{0,t}^{x;u}[  Y^{0,x;  u}_t ]-C\e =Y^{0,x;u }_{0}-C\e.\ea$$
%%
%
Therefore,
 $  \bar V(x)  \ges    \sup\limits_{u(\cd)\in\cU} Y^{x; u }_{0}  -C\e=V(x)-C\e.
 $
Finally, using the arbitrariness of $\e>0$, we get
 $
  \bar V(x) \ges  V(x), $ $x\in\dbR^n $.  The stated relation  in Proposition \ref{DPP} is obtained.

 \endpf

\subsection{HJB equations with integral-differential operators}

We consider the following HJB equation,
\bel{HJB-W}
\sup\limits_{u\in U}\[\cL^{u}V(x)  +\cB^uV(x) +f(x,V(x),DV(x).\si(x,u),\cC^uV(x),u) \]=0, \q  x\in\dbR^n,
 \ee
which involves the following  differential and integral   operators,
$$\ba{ll}\cL^{u}V(x): =DV(x).b(x,u) +\frac12\tr\big(\si\si^\top(x,u)D^2V(x)\big),\\
\ns\ds  \cB^uV(x):=\int_{E}\big[V(x+\g(e,x,u))-V(x)-DV(x).\g(e,x,u)\big]\l(\mathrm{d}e),\\
\ns\ds \cC^uV(x):=\int_E\big[ V(x+\g(e,x,u))-V(x)\big]\rho(e)\l(\mathrm{d}e),\q x\in\dbR^n,\ u\in U.\ea$$

In this part, we aim to associate  the value function $V(\cd)$ in \eqref{bar u} with the above HJB equation.
 Note that, under our hypotheses {\bf (C1)}-{\bf (C4)},     the value function $V(\cd)$ is not necessarily smooth. Therefore, we shall resort to a kind of  weak solution, the viscosity solution, which was  introduced     by Crandall, Lions \cite{CL-1983}; we also refer  to   Crandall et al. \cite{CLP-1992}.
Let us recall  the definition of a viscosity solution for  \eqref{HJB-W}.

\bde\label{Def-vis}\sl $V(\cd)\in C(\dbR^n)$ is said to be

(i) a viscosity subsolution of HJB equation \eqref{HJB-W}, if for any $\f\in C^2(\dbR^n)$  and any sufficiently small $\d>0$,
$$  \sup\limits_{u\in U}\[ \cL^{u}\f(x) +\cB^u_\d(V,\f)(x) +f(x,V(x),D\f(x).\si(x,u),\cC^u_\d(V,\f)(x),u) \]\ges 0 $$
holds at any local maximum point $x\in\dbR^n$ of $V-\f$, where
$$\ba{ll}
 \ds \cB^u_\d(V,\f)(x):=\int_{E_\d}\big[\f(x+\g(e,x,u))-\f(x)-D\f(x).\g(e,x,u)\big]\l(\mathrm{d}e)\\
 \ds\qq\qq\qq\qq +\int_{E_\d^c}\big[V(x+\g(e,x,u))-V(x)-D\f(x).\g(e,x,u)\big]\l(\mathrm{d}e),\\
 \ds \cC^u_\d(V,\f)(x):=\int_{E_\d}\big[ \f(x+\g(e,x,u))-\f(x)\big]\rho(e)\l(\mathrm{d}e)+\int_{E_\d^c}\big[ V(x+\g(e,x,u))-V(x)\big]\rho(e)\l(\mathrm{d}e),
\ea$$
and $E_\d = \{e \in E\mid  |e|<\d\}$.

(ii) a viscosity supersolution of HJB equation \eqref{HJB-W}, if for any $\f\in C^2(\dbR^n)$ and   sufficiently small $\d>0$,
$$ \sup\limits_{u\in U}\[ \cL^{u}\f(x) +\cB^u_\d(V,\f)(x) +f(x,V(x),D\f(x),D\f(x).\si(x,u),\cC^u_\d(V,\f)(x),u) \]\les 0  $$
holds at any local minimum point $x\in\dbR^n$ of $V-\f$.

(iii) a viscosity solution of HJB equation \eqref{HJB-W}, if it is both the viscosity supersolution and subsolution of \eqref{HJB-W}.

\ede

\br\label{Re-Def}\sl
(i) In Definition \ref{Def-vis}  we can
replace $\cB^u_\d(V,\f)(x)$, $\cC^u_\d(V,\f)(x)$ by $\cB^u\f(x)$ and $\cC^u\f(x)$, respectively.
For the details, the readers can refer to \cite{BBP-1997, BHL-2011}.

(ii) Due to the linear growth of $V(\cd)$ (see Lemma \ref{W-Lip}), a standard argument allows to replace     the local maximum and minimum  points in Definition \ref{Def-vis}    by the global ones.
\er

 In what follows we present  the existence and the  uniqueness of the    viscosity solution of HJB equation \eqref{HJB-W}.

\bt\label{Th-exist}\sl  Under {\bf (C1)}-{\bf (C4)}, the value function $V(\cd)$ is the  unique  viscosity solution of HJB equation \eqref{HJB-W} in the following space
$$\Th=\Big\{\phi\in C(\dbR^n) \mid \lim\limits_{|x|\to\i}\phi(x)\exp\big\{-\ti A\big[\log\big((|x|^2+1)^\frac12\big)\big]\big\} =0, \mbox{ for some }\ti A>0 \Big\}.$$

\et

The proof of Theorem \ref{Th-exist} will be given after  a sequel of  auxiliary results.
Note that, in view of the   density argument,  the space $ C^2(\dbR^n)$ of the test functions $\f(\cd)$ in Definition \ref{Def-vis} can be replaced by $ C^\i(\dbR^n)$.
Moreover, taking into account the corresponding properties of $V(\cd)$, we
can assume additionally that the test function $\f $ and its derivatives have at most polynomial
growth as $|x|\to+\i$.
For such  $\f(\cd)\in C^\i(\dbR^n)$ and $(x,y,z,k(\cd), u)\in \dbR^n\times \dbR\times \dbR^d\times L_{\l}^2(E;\dbR)\times U$, we introduce
$$
  F(x,y,z,k(\cd),u):=  \cL^u\f(x)  +\cB^u\f(x)   +f\big(x,y+\f(x),z+D \f(x). \si(x,u),\int_Ek(e)\rho(e)\l(\mathrm{d}e)+\cC^u\f(x),u\big).
  $$
%
%Note that, $F(x,y,z,k,u)$ is Lipschitz  with respect to  $(y,z,k)  \in \dbR\times \dbR^d\times L_{\l}^2(E,\cB(E);\dbR)$, uniformly in $(x,u)\in \dbR^n\times U$.
%
%
%$$\ba{ll}
%\ns\ds |F(x,y,z,k,u)|\les C(1+|x|+|x|^2+|y|+|z|+\|k(\cd) \|),\\
%%
%\ns\ds |F(x,y,z,k(\cd),u)-F(x',y',z',k'(\cd),u)|\\
%%
%\ns\ds \les (1+|x|^2+|x'|^2)|x-x'|+|y-y'|+|z-z'|+\|k(\cd)-k'(\cd)\| .
%\ea $$
%
Then, for any $T>0$ and $u(\cd)\in \cU$,  let us  consider the following both  BSDEs
$$\ba{ll}
 \ds \left\{
\ba{ll}
 \ds\!\!\!  \mathrm d Y_r^{1,u}=- F\big(X^{x;u}_r,Y_r^{1,u},Z_r^{1,u},\int_E K_r^{1,u}(e)\rho(e)\l(\mathrm{d}e),u_r \big)\mathrm dr  + Z_r^{1,u}\mathrm dB_r+\int_EK_r^{1,u}(e)\ti{\m}(\mathrm dr,\mathrm de), \  r\in [0,T],\\
 \ds\!\!\!Y_T^{1,u}=0,
 \ea\right.  \ea$$
$$\ba{ll} \ds \left\{
\ba{ll}
 \ds\!\!\! \mathrm  d Y_r^{2,u}=-F\big(x,Y_r^{2,u},Z_r^{2,u},\int_E K_r^{2,u}(e)\rho(e)\l(\mathrm{d}e),u_r \big)\mathrm dr    +Z_r^{2,u}\mathrm dB_r+\int_EK_r^{2,u}(e)\ti{\m}(\mathrm dr,\mathrm de), \  r\in [0,T],\\
 \ds  \!\!\!Y_T^{2,u}=0,
 \ea \right.\ea$$
 and  the   ODE
$$\left\{
\ba{ll}
  \ds\!\!\!  \mathrm d Y_r^0=-F_0(x,Y_r^0,0,0 )\mathrm dr ,\q r\in[0,T],\\
 \ds  \!\!\!Y_T^{0}=0,
 \ea  \right.$$
with  $F_0(x,y,z,k(\cd)):=\sup\limits_{u\in U} F(x,y,z,k(\cd),u)$.

\ss
%The following results  hold true  among these three equations.
The above BSDEs have the following properties:

\bl\label{Lemma-Y-1} \sl For all $ s\in [0,T]$ and $u(\cd)\in\cU$,
$G_{s,T}^{x;u}[\f(X_T^{x;u})]-\f(X_s^{x;u})=Y_s^{1,u},$ $\dbP\mbox{-a.s. }  $

\el

\bl\label{Le-Y1-2}\sl For all $ s\in [0,T]$ and $u(\cd)\in\cU$, we have
$ |Y_s^{1,u}-Y_s^{2,u}|\les  CT^{\frac32} , $ $ \dbP\mbox{-a.s. }   $
\el

\bl\label{Le-Y1-3} \sl For all $s\in[0,T]$,
$ \ds Y_s^0=\esssup_{u(\cd)\in \cU} Y_s^{2,u},$ $\dbP\mbox{-a.s.  } $

\el

Note  that in the above statements  $T>0$ is arbitrarily  fixed,  and so the existence and the uniqueness of the solutions   of the above three equations are classical and known under our assumptions. Moreover,    the  proofs of these three Lemmas are standard and use   the classical methods of  finite horizon control problem involving  BSDEs with jumps,  we refer  to \cite{LP-2009, BHL-2011, LW-AMO}.
 Therefore, we shall  not  give their proofs again.

\ss

Based on these statements, we give the proof  of Theorem \ref{Th-exist}.

\ss

\no {\bf Proof  of Theorem \ref{Th-exist}:}
 First, we focus on the case of a supersolution.
Let $\f\in C^2(\dbR^n)$, and  $x\in\dbR^n$ be a    global minimum point of $V-\f$. Without loss of generality  we assume $V(x)=\f(x).$
By the DPP (Proposition \ref{DPP}), for all $T>0$  we have
 $$\f(x)= V(x)= \sup\limits_{u(\cd)\in\cU}G_{0,T}^{x;u}[V(X_T^{x;u})].$$
As
$V(y)\ges \f(y)$, $y\in\dbR^n$, it follows from    the comparison theorem for  BSDE with jumps that $G_{0,T}^{x;u}[V(X_T^{x;u})]\ges G_{0,T}^{x;u}[\f(X_T^{x;u})]$, $u\in\cU$, and so
  $\f( x) \ges  \sup\limits_{u(\cd)\in\cU}G_{0,T}^{x;u}[\f(X_T^{x;u})]$, $T>0$.
Thus,  by Lemma \ref{Lemma-Y-1}, we have
\bel{supY1}  \sup\limits_{u(\cd)\in\cU}Y_0^{1 ,u}= \sup\limits_{u(\cd)\in\cU} G_{0,T}^{x;u}[\f(X_T^{x;u})]-\f(x) \les 0.\ee
Further, by the Lemmas \ref{Le-Y1-2} and  \ref{Le-Y1-3}, and \eqref{supY1},
%p
  $Y^0_0=\sup\limits_{u(\cd)\in\cU}  Y_0^{2,u}\les   C T^{\frac32}.$ That is,
      $ \ds \int_0^T F_0(x,Y_r^0,0,0)\mathrm dr=Y_0^0 \les C T^{\frac32}.$
Dividing this latter inequality by $T$ and letting  $T\to0$, we get $F_0(x, 0,0,0)\les 0$. Hence,
$$  \sup\limits_{u\in U}\[\cL^{u}\f(x) +\cB^u\f(x) +f(x,V(x),D\f(x),D\f(x).\si(x,u),\cC^u\f(x),u) \]\les 0.  $$
Thus, $V(\cd)$ is a viscosity supersolution of  HJB equation \eqref{HJB-W}.
A similar argument can be used  for   the case of viscosity subsolution.
In this way, we complete the proof  that  $V(\cd)$ is a viscosity  solution of \eqref{HJB-W}.

 Furthermore, by making adequate  modifications in the proof of the uniqueness of HJB equations in parabolic case (we refer  to  Section 5 in \cite{BHL-2011}), we   get the uniqueness of    the viscosity solution   of \eqref{HJB-W} in $\Theta$.

\endpf

\br\label{Re-uni}\sl
Note that  the growth condition of  the functions in $\Th$ is weaker than the polynomial growth.
  The value function $V(\cd)$ is indeed of  linear growth, so that the uniqueness of the viscosity solution of HJB equation \eqref{HJB-W} also holds in
    $C_{pol}(\dbR^n):=\big\{\phi\in C(\dbR^n)\mid \phi(x) \mbox{ is   polynomial growth  in }x\in\dbR^n\big  \}.$

\er

\br\label{Re-p=2}\sl By going through  the whole  section, it is not hard to see that  all the results in Section 4  are true   for  any given and fixed  $p\ges 2$. It seems that HJB equation \eqref{HJB-W} is not concerned  by the special choice of $p$. However,  they are linked together by the imposed conditions of $b,\si,\g$ and $f$.

\er

\section{Stochastic verification theorems}

The results in the previous sections present a   probabilistic interpretation of  the viscosity solution of HJB equation \eqref{HJB-W}.  Next, we shall try to construct an optimal control  of Problem (OC)$_p$
  from  the solution of HJB equation \eqref{HJB-W}. That is, we shall  establish a stochastic verification theorem for Problem (OC)$_p$. Such a
   study will be carried out
  in the framework  of   classical solutions but also in that  of  viscosity solutions.  Let $p\ges2$ again   in this section.

We first   introduce the following definition of admissible feedback control laws.

\begin{definition}\sl   A measurable mapping $\mathbbm{u}: \mathbb R^n \rightarrow U$ is called   {\it admissible  feedback control law}, if for all
	$x\in \mathbb R^n $,    the following SDE with jumps
	\bel{SDE-u}
	\left\{
	\ba{ll}
	 \ds\!\!\!  \mathrm dX^{x;\mathbbm{u}}_r  = b\big ( X^{x;\mathbbm{u}}_r ,\mathbbm{u}(
	X^{x;\mathbbm{u}}_r)   \big)\mathrm dr + \sigma\big( X^{x;\mathbbm{u}}_r,\mathbbm{u}(X^{x;\mathbbm{u}}_r)   \big)\mathrm dB_r  +\int_E \g\big(e, X^{x;\mathbbm{u}}_{r-},\mathbbm{u}(X^{x;\mathbbm{u}}_{r-})   \big)\ti{\mu}(\mathrm dr,\mathrm de)  , \ r\ges  0,\\
 \ds\!\!\!  X^{x;\mathbbm{u}}_0  = x,
	\ea
	\right.
	\ee
and the BSDE with jumps
\bel{BSDE-u}\ba{ll}
 \ds    Y^{x;\mathbbm{u}}_s=Y^{x;\mathbbm{u}}_T +\int_s^T f\big(X^{x;\mathbbm{u}}_r,Y^{x;\mathbbm{u}}_r,Z^{x;\mathbbm{u}}_r,\int_E K^{x;\mathbbm{u}}_r(e)\rho(e)\l(\mathrm{d}e), \mathbbm{u}(X^{x;\mathbbm{u}}_r) \big)\mathrm dr  -\int_s^TZ^{x;\mathbbm{u}}_r\mathrm dB_r\\
 \ds\qq\qq -\int_s^T\int_EK^{x;\mathbbm{u}}_r(e)\ti{\mu}(\mathrm dr,\mathrm de), \q   0\les s\les T<\i, \ea\ee
admit  a unique   strong solution  $ X^{x;\mathbbm{u}}\in L_\dbF^p(0,\i;\dbR^n) $ and  $(Y^{x;\mathbbm{u}} ,Z^{x;\mathbbm{u}} ,K^{x;\mathbbm{u}}(\cd))  \in\sS^p_\dbF(0,\i)$, respectively,  and $\mathbbm{u}(X^{x;\mathbbm{u}}_\cd)\in \cU_{0,\i}^p$.

We denote by $\sU^p$  the set of all such admissible feedback control laws.
\end{definition}

\subsection{Stochastic verification theorem:  the classical solution case}\label{Sec_SVP-C}
First, for  $x\in\dbR^n, $ $u\in U$ and $\Psi (\cd)\in C^2(\dbR^n)$, by defining
$$
\cH(x,\Psi(x),D\Psi(x),D^2\Psi(x),u) :=\cL^{u}\Psi(x) +\cB^u\Psi(x)   +f(x,\Psi(x),D\Psi(x).\si(x,u),\cC^u\Psi(x) ,u) ,
$$
we can  rewrite the HJB equation \eqref{HJB-W} as follows,
\bel{HJB-W-1}
\sup\limits_{u\in U}\cH(x,V(x),DV(x),D^2V(x),u)=0, \q  x\in\dbR^n.
\ee
Let us consider a measurable    mapping    $ \mathbbm{u}: \dbR^n\times\dbR\times\dbR^n\times\dbS^n\to U$ such that 
%\bel{u-feed}\ba{ll}
% \ns\ds
 $$\mathbbm{u}(x,y,p,A)\in\argmax_{u\in U}\big\{\cH(x,y,p,A ,u)\big\}, \q x\in\dbR^n,\  (y,p,A)\in \dbR\times\dbR^n\times \dbS^n  $$
 %
% \ns\ds\equiv\Big\{\bar u\in U \mid \cH(x,\Psi(x),D\Psi(x),D^2\Psi(x),\bar u)=\max_{u\in U}\cH(x, \Psi(x),D\Psi(x),D^2\Psi(x), u)\Big\} .
 %\ea\ee
%
and we put
 \bel{OC-C}\mathbbm{u}^\Psi(x):=\mathbbm{u}(x,\Psi(x),D\Psi(x),D^2\Psi(x)),\q x\in\dbR^n, \  \Psi (\cd)\in C^2(\dbR^n).\ee

Now, we present   a stochastic verification theorem for Problem (OC)$_p$ in the case of  classical solutions.

	 \begin{theorem}\label{SVT-class}\sl
	 Let $p\ges 2$, and assume  {\bf(C1)$_p$}-{\bf(C4)}. Let $W( \cdot) \in {C^{2}}({\mathbb R^n})$ be a classical solution of the  HJB equation  \eqref{HJB-W-1}.
 	Then,
  	for all $x\in {\mathbb R^n}$ and $u(\cd) \in \cU_{0,\i}^p$, we have
 		$ W(x) \ges J(x;u(\cd))$.
Furthermore, if $\mathbbm{u}^W(\cd)$ belongs to $\sU^p$, then $ {\mathbbm{u}}^W\big( X^{x; {\mathbbm{u}^W}}_\cd\big)$ is an optimal control of Problem (OC)$_p$, where $X^{x; {\mathbbm{u}^W}}(\cd)$ satisfies
\eqref{SDE-u} with $ {\mathbbm{u}^W}(\cd)$,
%
% {\rm (ii)} defining $\bar{\mathbbm{u}}: \dbR^n\to U  $ as
%%
% \bel{OC-C}
%  \bar{\mathbbm{u}}(y)=\psi(y,W(y),DW(y),D^2W(y)),\q  y \in \dbR^n,
% \ee
%%
%% \q
%%\equiv\Big\{\bar u\in U \mid \cH(x,y,P,Q,\bar u)=\min_{u\in U}\cH(x,y,P,Q,u)\Big\}
%%\ea \ee
%%%
%if $\bar{\mathbbm{u}} (\cd)\in\sU$, then $\bar{\mathbbm{u}}\big( X^{x;\bar{\mathbbm{u}}}_\cd\big)$ is the optimal control of Problem (OC), where $X^{x;\bar{\mathbbm{u}}}(\cd)$ satisfies
%%
%\eqref{SDE-u} with $\bar {\mathbbm{u}}(\cd).$
% \bel{SDE-bar-u}
%\left\{
%\ba{ll}
%\ns\ds\!\!\!  dX^{x;\bar{\mathbbm{u}}}_s  = b\big ( s,X^{x;\bar{\mathbbm{u}}}_s ,\bar{\mathbbm{u}}(s,
%X^{x;\bar{\mathbbm{u}}}_s )   \big)\mathrm ds+ \sigma\big(X^{x;\bar{\mathbbm{u}}}_s ,\bar{\mathbbm{u}}(s,X^{x;\bar{\mathbbm{u}}}_s )   \big)dB(s)  , \ s\in[t,T],\\
%%
%\ns\ds\!\!\!  X^{x;\bar{\mathbbm{u}}}(t)  = x.
%\ea
%\right.
%\ee
%%
and  the classical solution $W(\cd)$ is just the value function $V_p(\cd)$ in \eqref{bar u}, i.e.,
  $$W(x)=J\big(x; {\mathbbm{u}^W}(X^{x; {\mathbbm{u}^W}}_\cd)\big)=V_p(x),\q x\in\dbR^n.$$

\end{theorem}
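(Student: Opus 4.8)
The plan is to run the classical It\^o--BSDE comparison argument, adapted to the infinite horizon by letting the terminal time go to infinity and exploiting the strict dissipativity $\bar\a_f>0$. Fix $x\in\dbR^n$, $u(\cd)\in\cU_{0,\i}^p$ and $T>0$. First I would apply It\^o's formula to $W(X^{x;u}_s)$ on $[0,T]$, using $W\in C^2$ and the jump dynamics of \eqref{state}. Writing $\bar Y_s:=W(X^{x;u}_s)$, $\bar Z_s:=DW(X^{x;u}_s).\si(X^{x;u}_s,u_s)$ and $\bar K_s(e):=W(X^{x;u}_{s-}+\g(e,X^{x;u}_{s-},u_s))-W(X^{x;u}_{s-})$, this exhibits $(\bar Y,\bar Z,\bar K)$ as the solution on $[0,T]$ of a BSDE with jumps whose generator is $\phi(r):=-\big(\cL^{u_r}W+\cB^{u_r}W\big)(X^{x;u}_r)$ and whose terminal value is $W(X^{x;u}_T)$. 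Since $W$ lies only in $C^2$, the local-martingale parts must be controlled first along a localizing sequence of stopping times and then released, relying on the global $L^p$-estimates of Lemma \ref{Lemma-SDE-1} together with the (at most polynomial) growth of $W$ and its derivatives.

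The key algebraic step is to compare $\phi$ with the genuine driver. Because $\int_E\bar K_r(e)\rho(e)\l(\mathrm de)=\cC^{u_r}W(X^{x;u}_r)$, $\bar Y_r=W(X^{x;u}_r)$ and $\bar Z_r=DW(X^{x;u}_r).\si$, the HJB inequality $\cH(\cd,W,DW,D^2W,u)\les\sup_{u\in U}\cH=0$, evaluated along the trajectory, gives $\phi(r)\ges f\big(X^{x;u}_r,\bar Y_r,\bar Z_r,\int_E\bar K_r(e)\rho(e)\l(\mathrm de),u_r\big)$, $\dbP$-a.s. Introducing the driver $\check f(s,y,z,k(\cd)):=f\big(X^{x;u}_s,y,z,\int_E k(e)\rho(e)\l(\mathrm de),u_s\big)+\big[\phi(s)-f\big(X^{x;u}_s,\bar Y_s,\bar Z_s,\int_E\bar K_s(e)\rho(e)\l(\mathrm de),u_s\big)\big]$, whose bracket is nonnegative and independent of $(y,z,k(\cd))$, the triple $(\bar Y,\bar Z,\bar K)$ solves the BSDE with driver $\check f$, and $\check f$ dominates the genuine driver pointwise while inheriting {\bf (C2)}--{\bf (C4)} (hence {\bf (B1)}--{\bf (B3)}, {\bf (S)} and $\bar\a_f>0$). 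The comparison theorem for BSDEs with jumps on $[s,T]$ (the finite-horizon analogue of Theorem \ref{Th-Com}, proved by the same change of measure) then yields $\bar Y_s\ges G^{x;u}_{s,T}[W(X^{x;u}_T)]$; at $s=0$ this reads $W(x)\ges G^{x;u}_{0,T}[W(X^{x;u}_T)]$.

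Next I would pass to the limit $T\to\i$. Using the flow property $Y^{x;u}_0=G^{x;u}_{0,T}[Y^{x;u}_T]$ and the a priori estimate for the difference of two BSDEs with the same driver but different terminal data, the strict dissipativity of $f$ in $y$ produces an exponential factor, exactly as in the proof of Theorem \ref{Th-Com}, so that $\big|G^{x;u}_{0,T}[Y^{x;u}_T]-G^{x;u}_{0,T}[W(X^{x;u}_T)]\big|\les C e^{-\bar\a_f T}\big(\dbE^{\cF_0}|Y^{x;u}_T-W(X^{x;u}_T)|^2\big)^{1/2}$. Since $Y^{x;u}_T\to0$ (Lemma \ref{Le-Yp}(i)), $\dbE^{\cF_0}|X^{x;u}_T|^p\to0$ (Lemma \ref{Lemma-SDE-1}(i)) and $|W(X^{x;u}_T)|\les C(1+|X^{x;u}_T|)$, the right-hand side tends to $0$; hence $J(x;u(\cd))=Y^{x;u}_0=\lim_{T\to\i}G^{x;u}_{0,T}[W(X^{x;u}_T)]\les W(x)$. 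Taking the supremum over $u(\cd)\in\cU_{0,\i}^p$ gives $V_p(x)\les W(x)$. For optimality I would take $u^*_s:=\mathbbm{u}^W(X^{x;\mathbbm{u}^W}_s)$; by the $\argmax$ definition \eqref{OC-C}, $\cH(X^{x;\mathbbm{u}^W}_s,W,DW,D^2W,u^*_s)=\sup_{u\in U}\cH=0$ along this trajectory, so every inequality above becomes an equality and $W(x)=J(x;u^*(\cd))$. Since $\mathbbm{u}^W\in\sU^p$ forces $u^*(\cd)\in\cU_{0,\i}^p$, we obtain $V_p(x)\ges J(x;u^*(\cd))=W(x)\ges V_p(x)$, whence $W=V_p$ and $u^*(\cd)$ is optimal.

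I expect the genuine difficulty to be concentrated in the $T\to\i$ passage: unlike the finite-horizon case there is no terminal cost anchoring the BSDE, so the argument hinges essentially on the strict monotonicity $\bar\a_f>0$ to let the BSDE \emph{forget} the terminal value $W(X^{x;u}_T)$, while simultaneously the growth of $W(X^{x;u}_T)$ must be tamed through the global $L^p$-integrability established in Section~2. A secondary technical point is the localization required to legitimize It\^o's formula and to discard the local-martingale terms for a merely $C^2$ solution $W$.
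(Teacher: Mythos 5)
Your proposal is correct and takes essentially the same route as the paper's proof: apply It\^o's formula to $W(X^{x;u}_\cd)$ to exhibit $\big(W(X^{x;u}),\,DW.\si,\,W(X^{x;u}_{\cdot-}+\g)-W(X^{x;u}_{\cdot-})\big)$ as a BSDE solution whose driver dominates the true driver $f$ pointwise thanks to the HJB inequality $\cH\les 0$, invoke the comparison theorem to get $W(x)\ges J(x;u(\cd))$, and observe that along the argmax feedback $\mathbbm{u}^W$ the domination becomes an equality, forcing $W(x)=J\big(x;\mathbbm{u}^W(X^{x;\mathbbm{u}^W}_\cd)\big)=V_p(x)$. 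The only difference is packaging: you run the finite-horizon comparison on $[0,T]$ and pass to the limit $T\to\i$ via the exponential-forgetting estimate driven by $\bar\a_f>0$, which is exactly the mechanism inside the proof of Theorem \ref{Th-Com} that the paper cites directly — and note that both variants (yours explicitly, the paper's tacitly) rely on growth control of $W$, $DW$, $D^2W$ beyond the bare $C^2$ hypothesis to justify the integrability of the BSDE data and the vanishing of $\dbE\big[|W(X^{x;u}_T)|^2\big]e^{-2\bar\a_f T}$ as $T\to\i$.
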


 	 \begin{proof}
 	 	{\emph{Step 1.}}
For all $ x \in   {\mathbb R^n},$ $u(\cd)\in \cU_{0,\i}^p$,	by applying It\^o's formula to $W(X^{x;u} )$, we get
 	 	\bel{BSDE-W}\ba{ll}
        \ds \mathrm d W\big(X^{x;u}_r\big)
     % =W\big(X^{x;u}_T\big)- \int_s^T  W_x\big(X^{x;u}_r\big). \sigma\big (X^{x;u}_r,u_r\big)  \mathrm dB_r- \int_s^T \int_E[V(X^{x;u}_{r-}+\g(e,X^{x;u}_{r-},u_r))-V( X^{x;u}_{r-})]\ti{\m}(\mathrm dr,\mathrm de) \\
%       %
%       \ns\ds  \!-\! \int_s^T\! \! \(W_x\big(X^{x;u}_r\big). b\big (X^{x;u}_r,u_r\big)+\frac12 D^2V(X_r^{0,x;u}) |\si(X^{x;u}_r,u_r)|^2\)\mathrm dr\\
%        %
%        \ns\ds\!-\int_s^T \int_E[V(X^{x;u}_{r-}+\g(e,X^{x;u}_{r-},u_r))-V( X^{x;u}_{r-})-\lan DV(X^{x;u}_{r-}), \g(e,X^{x;u}_{r-},u_r)\ran]\l(\mathrm{d}e)\mathrm dr\\
        %
  =   DW\big(X^{x;u}_r\big). \sigma\big (X^{x;u}_r,u_r\big)  \mathrm dB_r+ \int_E[W(X^{x;u}_{r-}+\g(e,X^{x;u}_{r-},u_{r}))-W( X^{x;u}_{r-})]\ti{\m}(\mathrm dr,\mathrm de) \\
       \ns\ds\qq\qq\q  -  \(f(X^{x;u}_r,W(X^{x;u}_r), DW(X^{x;u}_r).\si(X^{x;u}_r,u_r),\cC^uW(X^{x;u}_r) ,u_r) \\
       \ns\ds\qq\qq\qq\qq\qq\qq -\cH(X^{x;u}_r,W(X^{x;u}_r),DW(X^{x;u}_r),D^2W(X^{x;u}_r),u_r) \)\mathrm dr,\q  r\ges 0.
 	 	\ea\ee
By the fact that $W(\cd)$ is a classical solution  of \eqref{HJB-W}, % for any $x\in\dbR^n$. we  know
%  $\ds  \sup_{u\in U}   \mathbb H \big(
%x,W(x),DW(x), D^2W(x),u \big) =0.$
%%
%Therefore,
for all $r\ges 0$ and $u(\cd)\in \cU_{0,\i}^p$,  we get
 \bel{compar-f}\ba{ll}
  \ds
    f\big(X^{x;u}_r,W\big(X^{x;u}_r\big),DW\big(X^{x;u}_r\big). \si(X^{x;u}_r,u_r),\cC^uW(X^{x;u}_r),u_r\big) \\
\ns\ds -\cH\big(r,X^{x;u}_r,W\big(X^{x;u}_r\big),DW\big(X^{x;u}_r\big),D^2W(X^{x;u}_r\big),u_r\big)\\
\ns\ds  \ges f\big(X^{x;u}_r,W\big(X^{x;u}_r\big),DW\big(X^{x;u}_r\big).\si(X^{x;u}_r,u_r),\cC^uW(X^{x;u}_r),u_r\big) ,\q \dbP\mbox{-a.s.}
\ea\ee
Applying the comparison theorem (Theorem \ref{Th-Com}) for infinite horizon  BSDEs with jumps  to \eqref{BSDE-W} and \eqref{BSDE-cost},   we get
$W\big(X^{x;u}_s\big) \ges  Y^{x;u}_s,$ $ \dbP\mbox{-a.s.,  } s\ges 0.$
In particular, for $s=0$  we get
 	\bel{Step1}
 	 		W(x) \ges Y^{x;u}_0= J\big(x;u(\cd)\big),\quad\mathrm{for\ all}\ u(\cd) \in \cU_{0,\i}^p .\ee

{\emph{Step 2.}}  For $
  {\mathbbm{u}}^W(\cd)\in\sU^p$  defined in \eqref{OC-C},   let $X^{x;  {\mathbbm{u}^W}}_\cd$ and $(Y^{x;{\mathbbm{u}^W}}_\cd,Z^{x;{\mathbbm{u}^W}}_\cd,K^{x;{\mathbbm{u}^W}}_\cd)$ be  the  solutions  of   SDE \eqref{SDE-u} and BSDE \eqref{BSDE-u}, respectively.   Applying It\^o's formula to $W(X^{x;{\mathbbm{u}^W}}_\cd)$, we have \eqref{BSDE-W} with $u(\cd)$ being replaced by  $ {\mathbbm{u}^W}(X^{x; {\mathbbm{u}^W}}_\cd)$.
 %%
% $$\ba{ll}
%\ns\ds \mathrm dV\big(X^{x;\bar{\mathbbm{u}}}_r\big)   =   DV\big(X^{x;\bar{\mathbbm{u}}}_r\big). \sigma\big (X^{x;\bar{\mathbbm{u}}}_r, \bar{\mathbbm{u}}(X^{x;\bar{\mathbbm{u}}}_r)\big) \mathrm dB_r
%+ \int_E[W(X^{x;\bar{\mathbbm{u}}}_{r-}+\g(e,X^{x;\bar{\mathbbm{u}}}_{r-}, \bar{\mathbbm{u}}(X^{x;\bar{\mathbbm{u}}}_{r-}))-W( X^{x;\bar{\mathbbm{u}}}_{r-})]\ti{\m}(\mathrm dr,\mathrm de) \\
%%
%\ns\ds\qq\qq \q -\(f\big(X^{x;\bar{\mathbbm{u}}}_r,V\big(X^{x;\bar{\mathbbm{u}}}_r\big),
%DV\big(X^{x;\bar{\mathbbm{u}}}_r\big).\sigma\big (X^{x;\bar{\mathbbm{u}}}_r,\bar{\mathbbm{u}}(X^{x;\bar{\mathbbm{u}}}_r)\big),C^{\bar{\mathbbm{u}}}V(X^{x;\bar{\mathbbm{u}}}_r),
%\bar{\mathbbm{u}}(X^{x;\bar{\mathbbm{u}}}_r)\big)\\
%%
%\ns\ds\qq\qq\qq\q  - \mathbb H\big(X^{x;\bar{\mathbbm{u}}}_r,V\big(X^{x;\bar{\mathbbm{u}}}_r\big),
%DV\big(X^{x;\bar{\mathbbm{u}}}_r\big),
% 	 			D^2V\big(r,X^{x;\bar{\mathbbm{u}}}_r\big),\bar{\mathbbm{u}}(X^{x;\bar{\mathbbm{u}}}_r)\big)\)\mathrm dr,\q r\ges 0.
%\ea$$
%

Further, from   \eqref{OC-C},  we know \eqref{compar-f}  with $ {\mathbbm{u}^W}(X^{x; {\mathbbm{u}^W}}_\cd)$  instead of $u(\cdot)$ becomes an equality. By the uniqueness of the solution of the BSDE with jumps  we get
 $W\big(X^{x; {\mathbbm{u}}^W}_s\big) =  Y^{x;{\mathbbm{u}}^W}_s,$ $\dbP\mbox{-a.s.,} \ s\ges0$,
and,  in particular,
  	$W(x) = Y^{x; {\mathbbm{u}}^W}_0$, $x\in\dbR^n$.
Then, combined with \eqref{Step1},  for any $x\in\dbR^n$, we get
 $$W(x) =J\big(x;{\mathbbm{u}^W}(X^{x; {\mathbbm{u}}^W}_\cd) \big)=\sup_{u(\cd)\in\cU_{0,\i}^p}J\big(x;u(\cd)\big)=V_p(x).$$
Consequently,  the classical solution $W(\cd)$ of HJB equation  \eqref{HJB-W} is indeed the value function $V_p(\cd)$ of Problem (OC)$_p$,
and
$ {\mathbbm{u}^W}(X^{x; {\mathbbm{u}}^W}(\cd))$ is an optimal control of Problem (OC)$_p$.

 	 \end{proof}

\br{}\sl
  The function  $ {\mathbbm{u}^W}(\cd)\in\sU^p$ in Theorem \ref{SVT-class}  is said to be an optimal feedback control law of Problem (OC)$_p$.
 In fact, $ {\mathbbm{u}^W}(\cd)\in\sU^p$ is an ideal requirement. It is not easy to obtain  the needed regularity of $  {\mathbbm{u} }(\cd) $ ensuring  $ {\mathbbm{u}^W}(\cd)\in\sU^p$, especially  in an infinite horizon control problem.
   Here we   just formally give a stochastic verification theorem  for Problem (OC)$_p$ in the framework of  the classical solutions, without the further exploration of the existence of $  {\mathbbm{u} }(\cd) $ in \eqref{OC-C}.

\er

% Finally, we return to the  condition {\bf(H3)}.
%
%\br\label{Re-H3} \sl
%As we see in the proof of Theorem \ref{SVT-class},
%the existence and uniqueness of the solutions of  the state equation \eqref{SDE-u} and reflected BSDE \eqref{RBSDE-u} under $\bar{\mathbbm{u}}(\cd,\cd)$  is necessary, so that some additional conditions on  $\bar{\mathbbm{u}}(\cd,\cd)$ are needed. By \eqref{OC-C},
%it   eventually comes  down to the conditions on $\psi$.  Our condition  {\bf(H3)} ensures the mapping $\psi$ with the needed regularity, so that the proof of Theorem \ref{SVT-class} can  go through smoothly.
%
%
% In general, the mapping  $\psi$ is very complicated, referring to \cite{Yong-2012}. %This paper devotes to the framework of viscosity solutions, so we will not stay here too long.
%We just  keep this point in mind (the additional conditions on  $\psi$ are needed to guarantee   the controlled system and cost functional to make sense)  when  dealing with the specific control problems.
%
%\er
%
%\br{}\sl
%In fact,  from condition  {\bf(H3)}, we know  $\bar{\mathbbm{u}}(\cd,\cd)$ introduced in \eqref{OC-C} is the admissible feedback control law. Combined with its optimality (Theorem \ref{SVT-class}), it turns out to be the optimal feedback control law of Problem (OC). And its outcome $\bar{\mathbbm{u}}(\cd,X^{x;\bar{\mathbbm{u}}}(\cd))\in\cU$ is the optimal control of Problem (OC), $\bar{\mathbbm{u}}(\cd,X^{0,x;\bar{\mathbbm{u}}}(\cd))\in\cU_{0,T}$ is the optimal control of Problem (OC).
%
%
%
%\er

\subsection{Stochastic verification theorem: the viscosity  solution case}\label{Sec_SVP-C}

Now we are interested in the    stochastic verification theorem for  Problem (OC)$_p$ in the framework of viscosity solutions.
In order  to state the stochastic  verification theorem  more clearly in this case,
  we rewrite  the HJB equation \eqref{HJB-W}   as follows:
\bel{HJB-W-2}
\sup\limits_{u\in U}\dbH(x,V(x),DV(x),D^2V(x),\cB^uV(x),\cC^uV(x),u)=0, \q  x\in\dbR^n,
 \ee
where, for  $x\in\dbR^n, $ $\Psi (\cd)\in C^2(\dbR^n)$, $u\in U$,
 \bel{H-2}
\ba{ll}
 \ds
\dbH(x,\Psi(x),D\Psi(x),D^2\Psi(x),\cB^u\Psi(x),\cC^u\Psi(x),u) \\
\ns\ds :=\cL^{u}\Psi(x) +\cB^u\Psi(x)   +f(x,\Psi(x),D\Psi(x).\si(x,u),\cC^u\Psi(x) ,u) .
\ea \ee

However,   viscosity solutions  are not necessarily smooth.  Therefore, we need to   introduce  the  notion  of weak derivatives
 (second-order   subdifferentials)   for our studies.

%\bde\sl  Let $(\tilde t,\tilde x)\in [0,T]\times \dbR^n$ and function $w\in C([0,T]\times\dbR^n)$, the \emph{second-order right parabolic superdifferential} of $w$ at  $(\tilde t,\tilde x)$ is defined as
%%
%\bel{}\ba{ll}
%\ns\ds D^{1,2,+}_{t+,x}w(\tilde t,\tilde x):=  \Big\{(q,P,Q)\in \dbR\times \dbR^n\times \dbS^n\Big|\limsup\limits_{t\downarrow\tilde t, x\to\tilde x}\frac 1{|t-\tilde t|+|x-\tilde x|^2}\big[w(t,x)-w(\tilde t,\tilde x)\\
%%
%\ns\ds\hskip6.45cm-q(t-\tilde t)-\lan p,x-\tilde x\ran-\frac 12(x-\tilde x)^\top P(x-\tilde x) \big]\les 0\Big\};
%%
%\ea\ee
%%
%and the \emph{second-order right subdifferential} of $w$ at $(\tilde t,\tilde x)$ is defined as
%\bel{}\ba{ll}
%\ns\ds D^{1,2,-}_{t+,x}w(\tilde t,\tilde x):=  \Big\{(q,P,Q)\in \dbR\times \dbR^n\times \dbS^n\Big|\liminf\limits_{t\downarrow\tilde t, x\to\tilde x}\frac 1{|t-\tilde t|+|x-\tilde x|^2}\big[w(t,x)-w(\tilde t,\tilde x)\\
%%
%\ns\ds\hskip6.45cm-q(t-\tilde t)-\lan p,x-\tilde x\ran-\frac 12(x-\tilde x)^\top P(x-\tilde x) \big]\ges 0\Big\}.
%%
%\ea\ee
%%
%
%\ede
\bde\sl  Let $ x \in  \dbR^n$ and $\Phi\in C( \dbR^n)$. The  {\it second-order  subjet} of $\Phi$ at  $x$ is defined as
$$  D^{2,-}_{x}\Phi(x):=  \Big\{(P,Q)\in \dbR^n\times \dbS^n\Big|\liminf\limits_{y\to x}\frac 1{|y- x|^2}\big[\Phi(y)-\Phi(x) -\lan P,y-x\ran-\frac 12(y-x)^\top Q(y-x) \big]\ges 0\Big\}.
%
%\ns\ds D^{2,+}_{x}w(x):=  \Big\{(P,Q)\in \dbR^n\times \dbS^n\Big|\limsup\limits_{y\to x}\frac 1{|y- x|^2}\big[w(y)-w(x) -\lan P,y-x\ran-\frac 12(y-x)^\top Q(y-x) \big]\les 0\Big\}.
%
$$
%%
%and the \emph{second-order parabolic subdifferential} of $w$ at $(t,x)$ is defined as
%\bel{sub}\ba{ll}
%\ns\ds D^{1,2,-}_{t,x}w(t,x):=  \Big\{(q,P,Q)\in \dbR\times \dbR^n\times \dbS^n\Big|\liminf\limits_{s\to t, y\to x}\frac 1{|s-t|+|y-x|^2}\big[w(s,y)-w(t,x)\\
%%
%\ns\ds\hskip6.46cm -q(s-t)-\lan P,y-x\ran-\frac 12(y-x)^\top Q(y-x) \big]\ges 0\Big\}.
%%
%\ea\ee
%
%
%The  second-order right parabolic superdifferential $D^{1,2,+}_{t+,x}w(t,x)$  can be  defined similarly to  \eqref{super}, just  modifying $s\to t$ to $s\to t^+$.
%%
\ede

\bl\label{L11}\sl Let  $\Phi\in C(\mathbb{R}^n)$ and $x_0\in  \dbR^n$ be given. Then,
 $(P,Q)\in D_{x}^{2,-}\Phi(x_0)$ if and only if there exists a function $\f\in  C^{2}( \mathbb{R}^n)$ such that, for any $ x\in \dbR^n$, $x\neq x_0$,
$\f(x)<\Phi(x),$  and $$\big(\f(x_0),D\f(x_0),D^2\f(x_0)\big)=\big(\Phi(x_0),P,Q\big).$$
%
% {\rm(ii)} $(P,Q)\in D_{x}^{2,+}w(x_0)$ if and only if there exists a function $\f\in  C^{2}( \mathbb{R}^n)$ such that, for any $ x\in \dbR^n$, $x\neq  x_0$,
%$\f(x)>w(x),$   and $$\big(\f(x_0),\f_x(x_0),\f_{xx}(x_0)\big)=\big(w(x_0),P,Q\big).$$
%
Moreover, if for some $k\ges 1$, $x\in  \dbR^n$,
 \bel{poly}|\Phi(x)|\les C(1+|x|^k),\ee
then we can choose $\f$ such that $\f$,   $\f_x$, $\f_{xx}$  also satisfy \eqref{poly} with different constants $C$.

\el

The details of the above result  can be found in \cite{YZ, ZYL, GSZ1}.
It  provides    smooth test functions to characterize the subdifferentials, which  will play a key role in the study of the verification theorem.
For convenience, we denote by $ \Xi (P,Q,\Phi)(x)$   the set of  the smooth functions characterizing the second-order  subdifferentials    of $\Phi(x)$ at $x\in\dbR^n$  as  indicated in Lemma \ref{L11}. More precisely, for $(P,Q)\in D_x^{2,-}\Phi (x)$, we define
\bel{Th-space}
\ba{ll}
\Xi (P,Q,\Phi )(x)=\Big\{\f\in C^2(\dbR^n)\mid  \Phi-\f \mbox{ attains a strict minimum  over }\dbR^n  \mbox{ at }x\in\dbR^n, \\
\ns\ds\hskip 4.8cm \mbox{  and }\Phi(x) = \f(x),\  (P,Q)=(D\f(x),D^2\f(x))  \Big\}.
%
%\Xi^+(P,Q,w(x))=\Big\{\f\in C^2(\dbR^n)\mid  w-\f \mbox{ attains the strict maximum  over }\dbR^n  \mbox{ at }x\in\dbR^n, \\
%%
%\ns\ds\hskip 3.6cm \mbox{  and }w(x) = \f(x),\  (P,Q)=(\f_x(x),\f_{xx}(x)) \mbox{ with } (P,Q)\in D_x^{2,+}w (x) \Big\}.
%
%
\ea
\ee
We also need the following well known   auxiliary results.

%\bl\label{lemma1} \sl \textcolor[rgb]{1.00,0.00,0.00}{Suppose {\bf(C1)$_p$}, let $x\in\mathbb{R}^n$ be fixed and $ u (\cd) \in \cU_{0,\i}^p,$ $X^{x;u}(\cd)$  be the corresponding state process of \eqref{state}.  For any $T>0$,  $r\in[0,T]$, defining the following processes
%%
%$$\ba{ll}
%%
%  \ns\ds  \psi_1(r):=  b(X^{x;u}_r ,u_r  ),\q  \psi_2(r):=  \sigma\sigma^\top(X^{x;u}_{r} ,u_r  ),\q  \psi_3(r):=| \g (\cd,X^{x;u}_{r} ,u_r  )|_{\l,p}^p
% \ea$$
%%
%we get  \bel{h-to0}\lim\limits_{h\to0^+} \frac 1h \int_s^{s+h}\dbE|\psi_i(r)-\psi_i(s) |\mathrm dr =0,\ \mbox{a.e.}, \ s\in[0,T],\ i=1,2,3.\ee}
%
%\el

\bl\label{le22}\sl
Let $g\in C([0,T])$ and extend $g$ to $(-\i,+\i)$ by setting $ g(t)=g(t^+\wedge T)  ,$ $  t\in (-\i,+\i)$.
  Suppose that  there is a function  $\rho(\cd)\in L^1(0,T;\mathbb{R})$, such that
$$\frac{g(t+h)-g(t)}{h}\ges \rho(t),\ \mbox{a.e.}, \ t\in [0,T],\ 0<h\les T-t.$$
Then,
$$g(\b)-g(\a)\ges \int_\a^\b \liminf\limits_{h\to 0^+}\frac{g(t+h)-g(t)}{h}\mathrm dt,\qq 0\les \a <\b\les T . $$

%\noindent {\rm (ii)} Suppose
%there is an  integral function  $\rho_2(\cd)\in L^1(0,T;\mathbb{R})$, and some $h_0>0$, such that
%\bel{con77}\frac{g(t+h)-g(t)}{h}\ges \rho_2(t),\ a.e., \ t\in [0,T],\ h\les h_0.\ee
%Then,
%%
%$$g(\b)-g(\a)\ges \int_\a^\b \liminf\limits_{h\to 0+}\frac{g(t+h)-g(t)}{h}dt,\qq 0\les \a\les \b\les T. $$

\el
%
%\begin{proof}
% We just prove (ii). By \eqref{con77} and Fatou's lemma, we have
%
%$$\ba{ll}
%\ns\ds \int_\a^\b \liminf\limits_{h\to 0+}\frac{g(t+h)-g(t)}{h}dt\les\liminf\limits_{h\to 0+}\int_\a^\b \frac{g(t+h)-g(t)}{h}dt\\
%%
%\ns\ds =\liminf_{h\to 0+}\frac 1h\[\int_{\a+h}^{\b+h}g_rdr-\int_\a^\b g_rdr\]\\
%%
%\ns\ds=\liminf_{h\to 0+}\frac 1h\[\int_{\b}^{\b+h}g_rdr-\int_\a^{\a+h}g_rdr\]\\
%%
%\ns\ds=g(\b)-g(\a).
%%
%\ea$$
% Similarly, (i) can be proved.
%
%\end{proof}

%\par For convenience, we firstly  recall  the definition of the viscosity solution of the  HJB equation with obstacle \eqref{HJB} as follows, referring to \cite{WY-2008}.

%
%\bde\label{Def-VIS} \sl
%A real-valued function $W\in C([0,T]\times\dbR^n)$ is said to be
%
%\no {\rm(i)}. the  viscosity subsolution of   \eqref{HJB}, if $V(T,x)\les\F(x)$ for all $x \in \dbR^n$, and at any point $(t,x)\in[0,T]\times\dbR^n$, for any $(q,P,Q)\in D^{1,2,+}_{t,x}V(t,x)$, we have
%%
%$$
% \max\Big\{ V(t,x)-h(t,x),-q -\sup_{u\in U}   \mathbb H \big( t,x,V(t,x),p, P ,u \big)  \Big\}\les0;
%$$
%
%
%\no {\rm(ii)}. the viscosity supersolution of   \eqref{HJB}, if $V(T,x)\ges\F(x)$ for all $x \in \dbR^n$, and at any point $(t,x)\in[0,T]\times\dbR^n$, for any $(q,P,Q)\in D^{1,2,-}_{t,x}V(t,x)$, we have
%%
%$$
%\max\Big\{ V(t,x)-h(t,x),-q -\sup_{u\in U}   \mathbb H \big( t,x,V(t,x),p, P ,u \big)  \Big\}\ges0;
%$$
%
%
%\no{\rm(iii)}. the viscosity solution of  \eqref{HJB}, if $W$ is both viscosity subsolution and viscosity supersolution of \eqref{HJB}.
%%\textcolor[rgb]{1.00,0.00,0.00}{XXX}
%\ede
%%
%
%
%
%
Here we strengthen   the previous condition  {\bf(C4)}  as follows.

\ss
 {\bf(C4)$'$} For all $(s,x,y,z)\in [0,\i)\times  \dbR^n\times\dbR\times \dbR^d$, $f$ is non-decreasing in $k\in\dbR$.

 \ss
We have  the following  stochastic verification theorem of Problem (OC)$_p$  in the framework of viscosity solutions.

\bt\label{SVT-VS}\sl  Given any $p>3,$  we suppose the assumptions   {\bf (C1)$_p$}-{\bf(C3)} and  {\bf(C4)$'$}.
Let $W(\cd)\in C_{pol}(\dbR^n)$  be a semi-convex    viscosity solution of   HJB equation   \eqref{HJB-W-2} with semi-convexity constant $\kappa>0$.
For  $ x\in \dbR^n$, let $(\bar u(\cd),X^{x;\bar {u}}(\cd))\in \cU_{0,\i}^p\times L_\dbF^p(0,\i;\dbR^n) \cap L_\dbF^p(\Omega;L^2(0,\i;\dbR^n)) $ be an admissible pair and  $(Y^{x; \bar{u}}(\cd),Z^{x; \bar{u}}(\cd),K^{x; \bar{u}}(\cd))\in\sS^p_\dbF(0,\i)$ solve    BSDE \eqref{BSDE-cost}  under the control process $\bar u(\cd) $.
Assume that, for all $T>0$,  there exists  a pair
$
\big(\bar P, \bar Q\big)\in  L^\i_\dbF(0,T;\dbR^n)\times L^\i_\dbF(0,T;\dbS^n)
$
such that
$$\left\{\ba{ll}
 \ds\!\!\! {\rm(i)}\   \big( \bar P_s , \bar Q_s \big)\in D^{2,-}_{x}W\big(X^{x; \bar{u} }_s \big) ;\\
\ns\ds\!\!\! {\rm(ii)}\      \bar P_s .\si\big( X^{x; \bar{u}}_s ,\bar{u}_s\big)=Z^{x; \bar{u} }_s;\\
\ns\ds\!\!\!  {\rm(iii)} \  {W (X^{x; \bar{u} }_{s-}+\g(e, X^{x; \bar{u} }_{s-}  ,\bar u_{s} ))-W(X^{x; \bar{u} }_{s-} )=K_s^{x;\bar u}(e)},\q e\in E ; \\
\ns\ds \!\!\! {\rm (iv)}\   \dbE\big[\dbH(X^{x; \bar{u} }_s,Y^{x; \bar{u} }_s,\bar P_s, \bar Q_s,\cB^u  \f(s,X^{x; \bar{u} }_s),\cC^u   \f(s,X^{x; \bar{u} }_s),\bar u_s)\big]   \ges0, \mbox { with some }  \f\in\Xi (\bar P_s,\bar Q_s,W)(X^{x; \bar{u} }_s ) ; \\
\ns\ds\!\!\! {\rm(v)}  \ { \liminf\limits_{T\to\i}\dbE[W(X^{x;\bar u }_T )]=0} ,
\ea\right.$$
hold true,  $\dbP\mbox{-a.s., a.e., } s\in[0,T]$.
Then, $\bar u(\cd)$ is an optimal control of Problem (OC)$_p$.

\et

\br{}\sl Let us look at $\Xi (\bar P_s,\bar Q_s,W)(X^{x; \bar{u} }_s )$, which in fact for the fixed  $(s,\o)\in [0,\i)\times\Omega $.  That is, for any fixed $(s,\o)\in [0,\i)\times\Omega $,
\bel{Th-space}
\ba{ll}
%p
\ds\Xi (P_s(\o),Q_s(\o),W)\big(X^{x; \bar{u}}_s(\o) \big)\\
\ns\ds=\Big\{\f(s,\o,\cd)\in C^2(\dbR^n)\mid W(\cd)-\f(s,\o,\cd) \mbox{ attains a strict minimum  over } \dbR^n  \mbox{ at }X^{x; \bar{u} }_s(\o)\in\dbR^n,   \\
\ns\ds\qq\qq\ \mbox{  and }   W(X^{x; \bar{u} }_s(\o)) = \f(s,\o,X^{x; \bar{u} }_s(\o)),\  (P_s(\o),Q_s(\o))=\big(D\f,D^2\f\big)(s,\o,X^{x; \bar{u} }_s(\o)) \Big\}.
%
%\Xi^+(P,Q,w(x))=\Big\{\f\in C^2(\dbR^n)\mid  w-\f \mbox{ attains the strict maximum  over }\dbR^n  \mbox{ at }x\in\dbR^n, \\
%%
%\ns\ds\hskip 3.6cm \mbox{  and }w(x) = \f(x),\  (P,Q)=(\f_x(x),\f_{xx}(x)) \mbox{ with } (P,Q)\in D_x^{2,+}w (x) \Big\}.
%
%
\ea
\ee

\er

\begin{proof}
According to the uniqueness of the viscosity solution of  \eqref{HJB-W-2} in $C_{pol}(\dbR^n)$ (Remark \ref{Re-uni}), we know that, for all $x\in \dbR^n$ and $u(\cd)\in\cU_{0,\i}^p$,
\bel{ee1}
W(x)=V_p(x)\ges J\big(x;u(\cd)\big).
\ee

For any $s\ges t$,   define $\cF_{s }^{B,t}:= \si\{B_r:t\les r\les s \}\vee\mathcal{N}_{\dbP_1}$,
$\cF_{s}^{\m,t}:=\Big(\bigcap\limits_{ \varsigma>s}\sigma\big\{\mu\big((\t,r]\times \Delta\big),\ t\les \t\les r\les \varsigma , \ \Delta\in \mathcal{B}(E)\big\}\)\vee\mathcal{N}_{\dbP_2}$,
and $\cF_{s}^t:=\cF_{s}^{B,t}\otimes \cF_{s}^{\m,t}$ augmented by all the $\dbP$-null sets in $\cF$.
For any $T>0$, we also fix some point $t_0\in [0,T]$ such that   (i)
 holds true.
 Under  $\cF_{t_0}(=\cF_{t_0}^0)$,
 %
%Define $\cF_{t_0}^{B,t}:= \si\{\textcolor[rgb]{1.00,0.00,0.00}{B_r}:t\les r\les t_0 \}\vee\mathcal{N}_{\dbP_1}$,
%%
%$\cF_{t_0}^{\m,tp}:=\Big(\bigcap\limits_{s>t_0}\sigma\big\{\mu\big((\t,r]\times \Delta\big),\ p\les \t\les r\les s, \ \Delta\in \mathcal{B}(E)\big\}\)\vee\mathcal{N}_{\dbP_2}$,
%and $\cF_{t_0}^t:=\cF_{t_0}^{B,t}\otimes \cF_{t_0}^{\m,t}$ augmented by all the $\dbP$-null sets in $\cF$.
we identify the conditional probability $\dbP(\cd\mid\cF_{t_0} ) $ with its regular conditional variant. So, for fixed
 $\o_0=(\o'_0,p'_0)=(\o'_0(\cd\wedge t_0),p'_0(\cd\wedge t_0))\in \Omega$,  the  probability $\dbP(\cd\mid\cF_{t_0} )(\o_0)$  is well defined.
In this new probability space $\big(\Omega,\cF,\dbP(\cd\mid\cF_{t_0} )(\o_0)\big)$, the random variables $X^{x; \bar u }_{t_0}$,  $\bar P_{t_0}$, $\bar Q_{t_0}$ are almost surely deterministic constants and equal to  $X^{x; \bar u} _{t_0}(\o_0)$, $\bar P_{t_0}(\o_0)$, $\bar Q_{t_0}(\o_0)$, respectively. Note that, in this probability space,  $B_s$, $s\ges t_0$, is still a standard Brownian motion with  $B_{t_0}=B_{t_0}(\o_0')$ almost surely, and $\m$ is  a Poisson random measure restricted   to $[t_0,\i) $ with $\m((0,t_0], \D)=\m(p'_0,(0,t_0]\times  \D)$, $\D\in\cB(E)$.

 Now the space is equipped with the  new filtration $\{\cF_s^{t_0}\}_{s\ges t_0 }$, and
the control process $\{\bar {u}_s\}_{s\ges t_0}$ is adapted to the new filtration.
 For $\o_0$, the process $X^{x; \bar u }(\cd) $ is a solution of \eqref{state} on $[t_0,\i)$  in $\big(\Omega,\cF,\dbP(\cd\mid\cF_{t_0})(\o_0)\big)$ with the  initial condition $X^{x;  \bar u }_{t_0}=X^{x; \bar u }_{t_0}(\o_0)$.

From now on, we  keep in mind that the above  $(t_0,\o_0)$ is fixed.  From $\big( \bar P_{t_0} , \bar Q_{t_0} \big)\in D^{2,-}_{x}W\big(X^{x; \bar{u} }_{t_0} \big)$,   Lemma \ref{L11} and  \eqref{Th-space}, we know there exists a function $ \f(t_0,\o_0,\cd) \in  C^{2}( \mathbb{R}^n)$ denoted by $\bar\f(\cd)$, such that  $W-\bar\f$ attains a strict minimum over $ \dbR^n$ at $ X^{x;\bar u }_{t_0}(\o_0) $, and
\bel{equ999} \(\bar\f\big(X^{x;\bar u }_{t_0}(\o_0) \big),D\bar\f\big(X^{x;\bar u }_{t_0}(\o_0)\big),D^2\bar\f \big(X^{x;\bar u }_{t_0}(\o_0)\big)\) =\(W\big(X^{x;\bar {u} }_{t_0}(\o_0)\big), \bar P_{t_0}(\o_0),\bar Q_{t_0}(\o_0)\). \ee
 %
%Note that function
% $\bar\f(\cd,\cd)$ depends on $t_0, \ \o_0$ indirectly from $ X^{x;\bar {u} }_{t_0}(\o_0),\bar q_{t_0}(\o_0), \bar P_{t_0}(\o_0),\bar Q_{t_0}(\o_0)$.
Moreover,  as $W(\cd)\in C_{pol}(\dbR^n)$ (in fact,  $W$ has  linear growth), $\bar \f$ can be chosen such that
$\bar\f,$ $ D\bar\f,$ and $ D^2\bar\f $ are also of  linear  growth in $x$ (see Lemma \ref{L11}).
Due to the choice above $\bar\f$ is  deterministic.
%
 %Note that, on the space $\big(\Omega,\cF,\dbP(\cd\mid\cF_{t_0} )(\o_0)\big)$, $\bar\f$ is the deterministic function when $(t_0,\o_0)$ is fixed.
%

 For any $h\in (0,T-t_0]$, applying It\^o's formula to $\bar\f\big(X^{x; \bar u }_\cd\big)$ on $[t_0,t_0+h]$, we  have
\bel{f-ito}
 \ba{ll}
\ds \bar \f\big(X^{x; \bar u }_{t_0+h}\big)-\bar\f\big(X^{x;\bar u }_{t_0}\big)  =\int_{t_0}^{t_0+h}\big[  D\bar\f\big(X^{x;\bar u }_r \big). b\big(X^{x;\bar u }_r , \bar{u}_r\big)  + \frac12\tr\big(\si\si^\top   (X^{x;\bar u }_r , \bar{u}_r )  D^2\bar\f  (X^{x;\bar u }_r  )\big) \big]\mathrm dr\\
    \ns\ds\ +\int_{t_0}^{t_0+h} \int_E\big[\bar \f(X^{x;u}_{r}+\g(e,X^{x;u}_{r},u_r))-\bar \f( X^{x;u}_{r})-  D\bar \f(X^{x;u}_{r}). \g(e,X^{x;u}_{r},u_r) \big]\l(\mathrm{d}e)\mathrm dr \\
 \ns\ds\ +\int_{t_0}^{t_0+h}   D\bar\f\big(X^{x; \bar u }_r \big).\si\big(X^{x;\bar u }_r , \bar{u}_r\big)  \mathrm dB_r + \int_{t_0}^{t_0+h}\!\!\int_E\big[\bar \f(X^{x;u}_{r-}+\g(e,X^{x;u}_{r-},u_{r}))-\bar \f( X^{x;u}_{r-})\big]\ti{\m}(\mathrm dr,\mathrm de).
 \ea
\ee
Note that  condition {\bf (C1)$_p$}, the regularity properties \eqref{Lemma-SDE-1} of $X^{x; \bar u }$ and  the choice  of $\bar\f\in  C^{2}( \mathbb{R}^n)$ satisfying linear growth, imply us that all the integrals on $[t_0,t_0+h]$ in the above equality make sense.

 Taking the conditional expectation $\dbE_{t_0}[\cd]:=\dbE\big[\cd\mid\cF_{t_0}    \big](\o_0) $ in \eqref{f-ito}, we get
 \bel{W-W-h}
 \ba{ll}
 \ds \frac 1h\dbE_{t_0}\big[W\big( X^{x;\bar u }_{t_0+h}\big)-W\big( X^{x;\bar u }_{t_0}\big) \big] \ges \frac 1h\dbE_{t_0}\big[\bar\f\big( X^{x;\bar u }_{t_0+h}\big)-\bar\f\big(X^{x;\bar u }_{t_0}\big) \big]\\
 \ns\ds= \frac 1h\dbE_{t_0} \[\int_{t_0}^{t_0+h}\(D\bar\f\big(X^{x;\bar u }_r \big). b\big(X^{x;\bar u }_r , \bar{u}_r\big)  + \frac12\tr\big(\si\si^\top  (X^{x;\bar u }_r ,\bar{u}_r )  D^2\bar\f  (r,X^{x;\bar u }_r )\big)  + \cB^{\bar u}\bar \f(X^{x;\bar u}_{r })   \)\mathrm dr \] .
 \ea
 \ee
  We have that,
 for all $h\in (0,T-t_0]$,   the functions  $\ds\dbE_{t_0} [D\bar\f\big(X^{x;\bar u }_\cd\big). b\big(X^{x;\bar u }_\cd , \bar{u}_\cd\big)]$,
 $ \dbE_{t_0} \big[\tr\big(\si\si^\top  ( X^{x;\bar u }_\cd ,\bar{u}_\cd )  D^2\bar\f  (X^{x;\bar u }_\cd )\big)\big]$  and
  $ \dbE_{t_0}[\cB^{\bar u}\bar \f(X^{x;\bar u}_{\cd })]  $  are  Lebesgue integrable  on $[t_0,t_0+h]$. In fact, when   $p>3$, we have
  %$T>0$ is any fixed.
  %In this case,  $\{X^{x;\bar u }_s\}_{s\in [t_0,T] }$ can be  regarded as the unique  solution of SDE with jumps \eqref{state} on the finite horizon $[t_0,T]$ with the initial state $X^{ x;\bar u}_{t_0}$. Therefore, according to the estimates for  finite horizon SDE with jumps (such as \cite{FK-1989, BBP-1997, Applebaum-2009}), we have for all $p\ges 2$,    $\dbE_{t_0}\[\sup\limits_{s\in[t_0,T]}|X^{x;\bar u }_s|^p\]\les C_{p,T}(1+|X^{x;\bar u }_{t_0}|^p)$ holds true under condition {\bf (C1)$_p$}.
 %  we claim that  all the   integrals on $[t_0,t_0+h ]$ in the above equality make sense.
%
% For convenience, for all $r\in[0,T]$, we set $$I_1(r):= D\bar\f\big(X^{x;\bar u }_r \big). b\big(X^{x;\bar u }_r , \bar{u}_r\big), \ I_2(r):=\tr\big(\si\si^\top  (r,X^{x;\bar u }_r ,\bar{u}_r )  D^2\bar\f  (X^{x;\bar u }_r )\big), \ I_3(r):= \cB^{\bar u}\bar \f(X^{x;\bar u}_{r- }).$$
%In fact, for the previous  fixed $T>0$,  we have
$$\ba{ll}
\ds  \int_{t_0}^{t_0+h}\!\!\big|\dbE_{t_0} \big[D\bar\f\big(X^{x;\bar u }_r \big). b\big(X^{x;\bar u }_r , \bar{u}_r\big)\big]\big|\mathrm dr + \int_{t_0}^{t_0+h}\big|  \dbE_{t_0} \big[\tr\big(\si\si^\top  (X^{x;\bar u }_r ,\bar{u}_r )  D^2\bar\f (X^{x;\bar u }_r )\big)\big]\big|\mathrm dr\\
\ns\ds  \les C\dbE_{t_0} \[ \int_{t_0}^{t_0+h} \!\!\!\big(1+|X^{x;\bar u }_r|\big)\big (|b(0, \bar{u}_r )|+|X^{x;\bar u }_r|+|\si (0, \bar{u}_r )|^2+|X^{x;\bar u }_r|^2\big)\mathrm dr \]\\
%
%\ds\q +C \dbE_{t_0}\[\int_{t_0}^{t_0+h} \big(|\si (0, \bar{u}_r )|^2+|X^{x;\bar u }_r|^2\big)\big(1+|X^{x;\bar u }_r| \big)\mathrm dr\]\\
%%
 \ns \ds  \les C\dbE_{t_0}\[\int_{t_0}^{t_0+h}  \big ( |X^{x;\bar u }_r|^2+|X^{x;\bar u }_r|^3+|\si (0, \bar{u}_r )|^2\cd|X^{x;\bar u }_r|  +|b (0, \bar{u}_r )|^2 +|\si (0, \bar{u}_r )|^2\big )\mathrm dr \]\\
   \ns \ds  \les C_{p}   \dbE_{t_0}\[1+\sup\limits_{r\in[t_0,t_0+h]}|X^{x;\bar u }_r|^p  +\sup\limits_{r\in[t_0,t_0+h]}|X^{x;\bar u }_r|^\frac{p}{p-2}  +\(\int_{t_0}^{t_0+h}  |\si(0, \bar{u}_r )|^2  \mathrm dr\)^\frac p2\]  \\
   \ns  \ds\q+C_{p}   \(\dbE_{t_0}\[\(\int_{t_0}^{t_0+h}  |b (0, \bar{u}_r )|^2 \mathrm dr\)^\frac p2\] \)^\frac 2p    + C_{p}   \(\dbE_{t_0}\[\(\int_{t_0}^{t_0+h}  |\si(0, \bar{u}_r )|^2  \mathrm dr\)^\frac p2\] \)^\frac 2  p \\
    \ns\ds  \les C_{p}   \dbE_{t_0}\[1+\sup\limits_{r\in[t_0,t_0+h]}|X^{x;\bar u }_r|^p  +\(\int_{t_0}^{t_0+h}  |\si(0, \bar{u}_r )|^2  \mathrm dr\)^\frac p2\] \\
    \ns \ds\q +C_{p}   \(\dbE_{t_0}\[\(\int_{t_0}^{t_0+h}  |b (0, \bar{u}_r )|^2 \mathrm dr\)^\frac p2\] \)^\frac 2p   + C_{p}   \(\dbE_{t_0}\[\(\int_{t_0}^{t_0+h}  |\si(0, \bar{u}_r )|^2  \mathrm dr\)^\frac p2\] \)^\frac 2  p     <\i,\ea$$
   % 
%    $$\ba{ll}
%\ds  \int_{t_0}^{t_0+h}\big|\dbE_{t_0} \big[D\bar\f\big(X^{x;\bar u }_r \big). b\big(X^{x;\bar u }_r , \bar{u}_r\big)\big]\big|\mathrm dr  \les C\dbE_{t_0}\[\int_{t_0}^{t_0+h} \big(1+|X^{x;\bar u }_r|\big)\big (|b(0, \bar{u}_r )|+|X^{x;\bar u }_r|\big)\mathrm dr\]\\
%%%
%\ns\ds \les C\dbE_{t_0}\[\int_{t_0}^{t_0+h}  \big ( |b (0, \bar{u}_r )|^2 +|X^{x;\bar u }_r|^2\big )\mathrm dr \]\\
%    %
%    \ns\ds \les C_{p}   \(\dbE_{t_0}\[\(\int_{t_0}^{t_0+h}  |b (0, \bar{u}_r )|^2 \mathrm dr\)^\frac p2\] \)^\frac 2p  + C_{p}   \(\dbE_{t_0}\[\(\int_{t_0}^{t_0+h}  |X^{x;\bar u }_r|^2 \mathrm dr\)^\frac p2\] \)^\frac 2p <\i,\ea$$
%$$\ba{ll}
% \ds  \int_{t_0}^{t_0+h}\big|  \dbE_{t_0} \big[\tr\big(\si\si^\top  (X^{x;\bar u }_r ,\bar{u}_r )  D^2\bar\f (X^{x;\bar u }_r )\big)\big]\big|\mathrm dr   \les C \dbE_{t_0}\[\int_{t_0}^{t_0+h} \big(|\si (0, \bar{u}_r )|^2+|X^{x;\bar u }_r|^2\big)\big(1+|X^{x;\bar u }_r| \big)\mathrm dr\]\\
%%%
%\ns\ds  \les C_{p}   \dbE_{t_0}\[1+\sup\limits_{r\in[t_0,t_0+h]}|X^{x;\bar u }_r|^p  +\(\int_{t_0}^{t_0+h}  |\si(0, \bar{u}_r )|^2  \mathrm dr\)^\frac p2\]   +   C_{p}   \(\dbE_{t_0}\[\(\int_{t_0}^{t_0+h}  |\si(0, \bar{u}_r )|^2  \mathrm dr\)^\frac p2\] \)^\frac 2  p       <\i,
%\ea$$
and
$$\ba{ll}
\ds \!\!\! \!\!\!\!\!\!\!\!\!\!\!\!\!\!\!\!\!\!\!\!\int_{t_0}^{t_0+h}|  \dbE_{t_0}[\cB^{\bar u}\bar \f(X^{x;\bar u}_{r })]|  \mathrm dr\\
 %
% \ns\ds =  \int_{t_0}^{t_0+h}\Big|  \dbE_{t_0}\[ \int_{E}\big[\bar \f_n(X^{x;\bar u}_{r} +\g(e, X^{x;\bar u}_{r},\bar{u}_r ))-\bar \f_n(X^{x;\bar u}_{r })-D\bar \f_n(X^{x;\bar u}_{r }).\g(e, X^{x;\bar u}_{r },\bar{u}_r )\big]\l(\mathrm{d}e)\] \Big|\mathrm dr  \\
%
\ns\ds\!\!\!\!\!\!\!\!\!\!\!\!\!\!\!\!\!\!\!\!\!\!\!\les   \dbE_{t_0}\[\int_{t_0}^{t_0+h} \int_{E}\int_0^1(1-\th)|D^2\bar \f (X^{x;\bar u}_{r } +\th\g(e, X^{x;\bar u}_{r },\bar{u}_r ))|\cd |\g(e, X^{x;\bar u}_{r },\bar{u}_r )|^2 \mathrm d\th\l(\mathrm{d}e)  \mathrm dr\]  \\
\ns\ds\!\!\!\!\!\!\!\!\!\!\!\!\!\!\!\!\!\!\!\!\!\!\!\les  C \dbE_{t_0}\[\int_{t_0}^{t_0+h}\int_{E} \big(1+|X^{x;\bar u }_r|+|\g(e, X^{x;\bar u}_{r  },\bar{u}_r ) |\big) \cd|\g(e, X^{x;\bar u}_{r  },\bar{u}_r )|^2 \l (\mathrm{d}e)  \mathrm dr\]  \\
 \ea$$
 $$\ba{ll}
% \ns\ds\les  C \dbE_{t_0}\[\int_{t_0}^{t_0+h}\int_{E} \big(1+|X^{x;\bar u }_r|\big)  |\g(e, X^{x;\bar u}_{r  },\bar{u}_r )|^2 \l (\mathrm{d}e)  \mathrm dr\] +C \dbE_{t_0}\[\int_{t_0}^{t_0+h}\int_{E}  |\g(e, X^{x;\bar u}_{r  },\bar{u}_r )|^3 \l (\mathrm{d}e)  \mathrm dr\]  \\
 %
  \ns\ds\les  C \dbE_{t_0}\[ \big(1+\sup\limits_{r\in[t_0,t_0+h]}|X^{x;\bar u }_r|\big)  \int_{t_0}^{t_0+h}\!\!\int_{E}|\g(e, X^{x;\bar u}_{r  },\bar{u}_r )|^2 \l (\mathrm{d}e)  \mathrm dr+\int_{t_0}^{t_0+h}\!\!\int_{E}  |\g(e, X^{x;\bar u}_{r  },\bar{u}_r )|^3 \l (\mathrm{d}e)  \mathrm dr\]  \\
   \ns\ds\les  C_p \dbE_{t_0}\[1+   \sup\limits_{r\in[t_0,t_0+h]}|X^{x;\bar u }_r|^{  p }  + \(\int_{t_0}^{t_0+h}\!\!\int_{E}|\g(e, X^{x;\bar u}_{r  },\bar{u}_r )|^2 \l (\mathrm{d}e)  \mathrm dr\)^\frac p2\] \\
   \ns\ds\q  +C   \dbE_{t_0}\[\int_{t_0}^{t_0+h}\int_{E}  |\g(e, X^{x;\bar u}_{r  },\bar{u}_r )|^3 \l (\mathrm{d}e)  \mathrm dr\]    <\i.
\ea$$
%Furthermore, the similar analysis can be made for  the stochastic integral parts
%$\ds \int_{t_0}^\cd   D\bar\f\big(X^{x; \bar u }_r \big).\si\big(X^{x;\bar u }_r , \bar{u}_r\big)  \mathrm dB_r$,
%and $\ds\int_{t_0}^\cd\int_E\big[\bar \f(X^{x;u}_{r-}+\g(e,X^{x;u}_{r-},u_r))-\bar \f( X^{x;u}_{r-})\big]\ti{\m}(\mathrm dr,\mathrm de)$.
% %
In the above,  we have used $p>3$, Lemma \ref{Lemma-SDE-1}, $u(\cd)\in \cU_{0,\i}^p$ and
%$$\ba{ll}
%
% %
%   \ns\ds  \dbE_{t_0}\[ \(\int_{t_0}^{T}\int_{E}|\g(e, X^{x;\bar u}_{r  },\bar{u}_r )|^2 \l (\mathrm{d}e)  \mathrm dr\)^\frac p2+\int_{t_0}^{T}\int_{E}  |\g(e, X^{x;\bar u}_{r  },\bar{u}_r )|^p \l (\mathrm{d}e)  \mathrm dr\]  \\
%   %
% %
% \ns\ds \les C \dbE_{t_0}\[\(\int_{t_0}^{t_0+h} \int_E   ( |\g(e,0,\bar{u}_r )|^2+\ell_\g(e)^2  |X^{x;\bar u}_{r  }|^2) \l(\mathrm de)\mathrm dr\)^\frac p2\\
% %
% \ns\ds\qq\qq +  \int_{t_0}^{t_0+h} \int_E   ( |\g(e,0,\bar{u}_r )|^p+\ell_\g(e)^p  |X^{x;\bar u}_{r  }|^p) \l(\mathrm de)\mathrm dr \] <\i.
%\ea$$
%
$$\ba{ll}
  \ds  \dbE_{t_0}\[  \int_{t_0}^{t_0+h}\int_{E}  |\g(e, X^{x;\bar u}_{r  },\bar{u}_r )|^3 \l (\mathrm{d}e)  \mathrm dr\] \\
   \ns\ds = \dbE_{t_0}\[  \int_{t_0}^{t_0+h}\int_{E}  |\g(e, X^{x;\bar u}_{r  },\bar{u}_r )|^{\frac{2p-6}{p-2}}\cd  |\g(e, X^{x;\bar u}_{r  },\bar{u}_r )|^{\frac{p}{p-2}} \l (\mathrm{d}e)  \mathrm dr\]  \\
    \ns\ds  \les \dbE_{t_0}\[  \int_{t_0}^{t_0+h}\int_{E} \( \frac{|\g(e, X^{x;\bar u}_{r  },\bar{u}_r )|^{\frac{2p-6}{p-2}\cd\frac{p-2}{p-3}}}{\frac{p-2}{p-3}}+\frac{ |\g(e, X^{x;\bar u}_{r  },\bar{u}_r )|^{\frac{p}{p-2}\cd p-2} }{p-2} \)\l (\mathrm{d}e)  \mathrm dr\]  \\
  \ns\ds  \les C_p\dbE_{t_0}\[  \int_{t_0}^{t_0+h}\int_{E} \big( |\g(e, X^{x;\bar u}_{r  },\bar{u}_r )|^2+  |\g(e, X^{x;\bar u}_{r  },\bar{u}_r )|^p\big)\l (\mathrm{d}e)  \mathrm dr\]  \\
 \ns\ds \les C_p \(\dbE_{t_0}\[\(\int_{t_0}^{t_0+h} \int_E   ( |\g(e,0,\bar{u}_r )|^2+\ell_\g(e)^2  |X^{x;\bar u}_{r  }|^2) \l(\mathrm de)\mathrm dr\)^\frac p2\]\)^\frac 2p\\
 \ns\ds\qq +  C_p \dbE_{t_0}\[\int_{t_0}^{t_0+h} \int_E   ( |\g(e,0,\bar{u}_r )|^p+\ell_\g(e)^p  |X^{x;\bar u}_{r  }|^p) \l(\mathrm de)\mathrm dr \] <\i.
\ea$$

Then,  we can apply the  Lebesgue differentiation theorem to \eqref{W-W-h},
 \bel{ineq-111}
 \ba{ll}
   \ds  \liminf\limits_{h\to 0^+}\frac 1h\dbE_{t_0}\Big[W\big(X^{x; \bar u }_{t_0+h}\big)-W\big(X^{x;\bar u }_{t_0}\big) \Big]\\
 \ns\ds\ges\liminf\limits_{h\to0^+}\frac 1h\dbE_{t_0} \[\int_{t_0}^{t_0+h}\!\! \Big(  D\bar\f\big(X^{x;\bar u }_r \big). b\big(X^{x;\bar u }_r , \bar{u}_r\big)  + \frac12\tr\big(\si\si^\top  (X^{x;\bar u }_r ,\bar{u}_r )  D^2\bar\f  (X^{x;\bar u }_r )\big)+ \cB^{\bar u}\bar \f(X^{x;\bar u}_{r })\)\mathrm dr  \] \!\!\!\!\!\!\!\!  \\
 \ns\ds=  D\bar\f\big(X^{x;\bar u }_{t_0}\big). b\big(X^{x;\bar u }_{t_0}, \bar u_{t_0} \big)   + \frac12\tr\big(\si\si^\top  (X^{x;\bar u }_{t_0},\bar{u}_{t_0} ) D^2\bar\f  (X^{x;\bar u }_{t_0} )\big)   + \cB^{\bar u}\bar \f(X^{x;\bar u}_{t_0}). \
 \ea
\ee
The arbitrariness of   the point  $t_0\in[0,T]$ in the above implies that  the set of the points $t_0$  satisfying condition (i) and  \eqref{ineq-111} is of full measure in $[0,T]$.

Next, we claim that, for    the above   $t_0$ lying in $[0,T] $,
 and for any  $h>0 $ with  $t_0+h\les T $,
\bel{ineq-000}\ba{ll}
 \ds {\rm(a)}\ \   \frac 1h\dbE_{t_0}\big[ W\big( X^{x; \bar u }_{t_0+h}\big)-W\big( X^{x; \bar u }_{t_0} \big)   \big]\ges  - C \big(1+  |X^{x; \bar u }_{t_0}|^2   \big), \\
 \ns\ds {\rm(b)}\ \  \frac 1h \dbE\big[ W\big( X^{x; \bar u }_{t_0+h}\big)-W\big( X^{x; \bar u }_{t_0} \big)  \big]\ges -  C \big (1 +|x |^2   \big)  .
 \ea\ee
In fact, from the semi-convexity of $W(\cd)$ and $\big(\bar P_{t_0} , \bar Q_{t_0} \big)\in D^{2,-}_{x}W(X^{x; \bar u }_{t_0})$, we have,    for all $h\in (0,T-t_0]$,
\bel{4.23} \ds W\big( X^{x; \bar u }_{t_0+h}\big)-W\big( X^{x; \bar u }_{t_0} \big)\ges  \bar P_{t_0} .( X^{x; \bar u }_{t_0+h}-X^{x; \bar u }_{t_0})-\k|X^{x; \bar u }_{t_0+h}-X^{x; \bar u }_{t_0} |^2.\ee
%
%with
%$$\left\{\ba{ll}
%%\ns\ds\!\!\! {\rm  I }:= C_{1,\d}\big(1+|X^{x; \bar u }_{t_0+h}| \big)h,\qq C_{1,\d}  \mbox { is the one in {\bf(D1)} };\\
%%%
%\ns\ds \!\!\! {\rm I}:=\big\langle \bar P_{t_0} , X^{x; \bar u }_{t_0+h}-X^{x; \bar u }_{t_0} \big\rangle ,\\
%%
%\ns\ds\!\!\!  {\rm  II} :=\b|X^{x; \bar u }_{t_0+h}-X^{x; \bar u }_{t_0} |^2 .
%\ea\right.$$
%%
Note that   $ |\bar P_{t_0} |+|\bar Q_{t_0} |\les C(1+|X^{x; \bar u }_{t_0} | ).$
Combined with  estimate \eqref{SDE-(i)}, we get
$$
 \ba{ll}
 % \ns\ds \dbE _{t_0}\big[{\rm I }  \big]\les C_{1,\d} h+ C_{1,\d} h\Big(\dbE _{t_0}  \Big[\sup\limits_{ r\in [t_0,t_0+h]}|X^{x; \bar u }_r|^2 \Big]\Big)^{\frac{1}{2}}\les  C h(1+ |x|^2 )^\frac12, \\
%%  \ns\ds \dbE _{t_0}\big[{\rm I }  \big]\les C_{1,\d} h\(1+\dbE _{t_0}  \Big[\sup\limits_{ r\in [t_0,t_0+h]}|X^{x; \bar u }_r|^2 \Big]^{\frac{1}{2}} \)\les  C h(1+ |x|^2 )^\frac12, \\
  %
 \ds  \dbE _{t_0}\big[  -\bar P_{t_0} . (X^{x; \bar u }_{t_0+h}-X^{x; \bar u }_{t_0} ) \big]
%  = \dbE_{t_0} \big[\big\langle \bar P_{t_0} , X^{x; \bar u }_{t_0+h}-X^{x; \bar u }_{t_0} \big\rangle  \big]\\
  %
= \dbE_{t_0} \[ \big\langle -\bar P_{t_0},\int_{t_0}^{t_0+h}b\big(X^{x; \bar u }_r ,\bar{u}_r\big)\mathrm dr\big\rangle  \]\\
\ns\ds \les  \(\dbE_{t_0}\big[|\bar P_{t_0}|^2  \big]\)^\frac12\(\dbE_{t_0}\[\(\int_{t_0}^{t_0+h}b\big(X^{x; \bar u }_r ,\bar{u}_r\big)\mathrm dr\)^2   \] \)^\frac12\les Ch\(1+\dbE_{t_0}\[\sup\limits_{ r\in [t_0,t_0+h]}|X^{x; \bar u }_r|^2  \]\) \\
\ns\ds \les Ch\(1+\(\dbE_{t_0}\[\sup\limits_{ r\in [t_0,t_0+h]}|X^{x; \bar u }_r|^p  \]\)^\frac 2p\) \les   Ch\(1+\big( 1+|X^{x; \bar u }_{t_0}|^p   \big)^\frac 2p\)\les   Ch\big(  1+|X^{x; \bar u }_{t_0}|^2   \big)  ,
\ea$$
and
$$
 \ba{ll}
  \ds   \dbE _{t_0}\big[|X^{x; \bar u }_{t_0+h}-X^{x; \bar u }_{t_0} |^2  \big] \\
  \ns\ds \les 3 \dbE _{t_0}\[ \(\!\int_{t_0}^{t_0+h}b\big(X^{x; \bar u }_r ,\bar{u}_r\big)\mathrm dr  \)^2  + \(\!\int_{t_0}^{t_0+h}\si\big(X^{x; \bar u }_r ,\bar{u}_r\big)\mathrm dB_r\!   \)^2 + \(\int_{t_0}^{t_0+h}\!\int_E\g\big(e,X^{x; \bar u }_{r-} ,\bar{u}_{r }\big)\ti{\m}(\mathrm dr, \mathrm de  \)^2\]\\
\ns\ds\les Ch \(1+\dbE _{t_0}\[\sup\limits_{ r\in [t_0,t_0+h]}|X^{x; \bar u }_r|^2  \]\)\les   Ch\big(  1+|X^{x; \bar u }_{t_0}|^2   \big) .
\ea$$
All the above constants $C$ can be different but they do not depend on $t_0$.
Substituting these two estimates into \eqref{4.23}, we get \eqref{ineq-000}-(a).
Furthermore, by taking the expectation on the both sides
of \eqref{ineq-000}-(a) and using  $\dbE  \big[  |X^{x; \bar u }_{t_0}|^2  \big] \les \Big(\dbE\big[  |X^{x; \bar u }_{t_0}|^p\big]\Big)^\frac {2}{p}\les C(1+|x|^2) $, we get   \eqref{ineq-000}-(b).

By taking expectation on the both sides of \eqref{ineq-111},  and  applying Fatou's Lemma (using \eqref{ineq-000}-(a)), we have
 $$
 \ba{ll}
 \ds \liminf\limits_{h\to 0^+}\frac 1h\dbE [W\big(X^{x;\bar u }_{t_0+h}\big)-W\big(X^{x;\bar u }_{t_0}\big) ] = \liminf\limits_{h\to 0^+}\frac 1h\dbE\[\dbE_{t_0}\big[W\big(X^{x;\bar u }_{t_0+h}\big)-W\big(X^{x;\bar u }_{t_0}\big) \big]\]\\
 \ns\ds\ges \dbE\[\liminf\limits_{h\to 0^+}\frac 1h\dbE_{t_0}\big[W\big(X^{x;\bar u }_{t_0+h}\big)-W\big(X^{x;\bar u }_{t_0}\big) \big]\]\\
 \ns\ds \ges  \dbE\[ \bar P_{t_0}. b\big(X^{x;\bar u }_{t_0},\bar{u}_{t_0}\big)  + \frac12\tr\big(\si\si^\top  (X^{x;\bar u }_{t_0},\bar{u}_{t_0})\bar Q_{t_0}\big) + \cB^{\bar u}\bar \f(X^{x;\bar u}_{t_0})\]\\
 \ns\ds{\ges -\dbE\[   f\big(X^{x;\bar u }_{t_0} ,Y^{x;\bar u }_{t_0}  ,\bar P_{t_0} .\sigma(X^{x;\bar u }_{t_0},\bar{u}_{t_0}),\int_E\big[ \bar \f(X^{x;\bar u}_{{t_0}}+\g(e,X^{x;\bar u}_{{t_0}},\bar u_{t_0}))-\bar \f( X^{x;\bar u}_{{t_0}})\big]\rho(e)\l(\mathrm{d}e),
 \bar{u}_{t_0}\big)\]}\\
 %,
 \ns\ds{ \ges -\dbE\[  f\big(X^{x;\bar u }_{t_0} ,Y^{x;\bar u }_{t_0}  ,\bar P_{t_0} .\sigma(X^{x;\bar u }_{t_0},\bar{u}_{t_0}),\int_E\big[ W(X^{x;\bar u}_{{t_0}}+\g(e,X^{x;\bar u}_{{t_0}},\bar u_{t_0}))-W ( X^{x;\bar u}_{{t_0}})\big]\rho(e)\l(\mathrm{d}e),
 \bar{u}_{t_0}\big)\]},
 %,
 %
 \ea
 $$
 where we also use \eqref{equ999}, (iv) and {\bf(C4)$'$}.
 %
 %where we have used $ \dbE\[\frac 1h \(W\big(t_0+h,X^{x;\bar u }_{t_0+h}\big)-W\big(t_0,X^{x;\bar u }_{t_0}\big)\)\mid\cF_{t_0}  (\o_0) \]\les \rho_1(\o_0),\ \mbox{for } h\les h_0,\ \mbox{for some } h_0>0, $ and some function $\rho_1$, when applying Fatou's lemma. The details for the inequality are left later (refer to Lemma \ref{lem-99}).

 %
For any $T>0$, due to the set of such points $t_0$ being of full measure in $[0,T]$, by  applying Lemma \ref{le22} (needing \eqref{ineq-000}-(b)),   we have
  $$
 \ba{ll}
   \ds  \dbE\big[W\big(X^{x;\bar u }_T \big)-W(x) \big] \\
   % \ges\dbE  \int_0^{T}    \big[  \bar P_s . b\big(X^{x;\bar u }_s ,\bar{u}_s\big) +\frac12\tr\big(\si\si^\top  (X^{x;\bar u }_s ,\bar{u}_s)\bar Q_s\big) + \cB^{\bar u}\bar \f(X^{x;\bar u}_{s} )  \big]\mathrm ds  \\
  %
%%  \ns\ds \ges\dbE  \int_0^{T}    \big[  \bar P_s . b\big(X^{x;\bar u }_s ,\bar{u}_s\big) +\frac12\tr\big(\si\si^\top  (X^{x;\bar u }_s ,\bar{u}_s)\bar Q_s\big) + \cB^{\bar u}\bar \f(X^{x;\bar u}_{s} )  \big]\mathrm ds  \\
%%  \ns\ds\ges -\dbE \int_0^{T}   f\big(X^{x;\bar u }_s ,Y^{x;\bar u }_s  ,\bar P_s .\sigma(X^{x;\bar u }_s ,\bar{u}_s),\int_E[ \bar \f(X^{x;\bar u}_{s-}+\g(e,X^{x;\bar u}_{s-},\bar u_s))-\bar \f( X^{x;\bar u}_{s-})]\rho(e)\l(\mathrm{d}e),
%% \bar{u}_s\big)\mathrm ds  \\
%% %
   \ns\ds\ges -\dbE \int_0^{T}   f\big(X^{x;\bar u }_s ,Y^{x;\bar u }_s  ,\bar P_s .\sigma(X^{x;\bar u }_s ,\bar{u}_s),\int_E\big[ W(X^{x;\bar u}_{s}+\g(e,X^{x;\bar u}_{s},\bar u_s))-W( X^{x;\bar u}_{s})\big]\rho(e)\l(\mathrm{d}e),
 \bar{u}_s\big)\mathrm ds  \\
  \ns\ds=-\dbE \int_0^{T}    f\big(X^{x;\bar u }_s ,Y^{x;\bar u }_s  ,Z^{x;\bar u }_s ,\int_EK^{x;\bar u }_s(e)\rho(e)\l(\mathrm{d}e),
 \bar{u}_s\big)\mathrm ds  ,
 \ea$$
   where we have used the conditions (ii) and  (iii).
 Finally, letting $T\to \i$ and using  (v), we have
  $$
  W(x) \les \dbE \int_0^\i f\big(X^{x;\bar u }_s ,Y^{x;\bar u }_s  ,Z^{x;\bar u }_s ,\int_EK^{x;\bar u }_s(e)\rho(e)\l(\mathrm{d}e),
 \bar{u}_s\big)\mathrm ds   =J\big(x; \bar u(\cd)\big).
 $$
 Combined with \eqref{ee1}, we get
 $
 W(x)=  V_p(x)=J\big(x; \bar u (\cd)\big), $  $x\in\dbR^n,
$
which means $\bar u(\cd)$ is an  optimal control of Problem (OC)$_p$.

\end{proof}

\br\label{Re-feed-law}\sl   As we have  seen in the  above proof,  the semi-convexity of the viscosity solution of HJB equation \eqref{HJB-W-2} is crucial.  Fortunately, the value function $V_p(\cd)$ of  Problem (OC)$_p$ as a viscosity solution of \eqref{HJB-W-2}  really
 satisfies this property under some additional conditions on $b$, $\si$, $g$ and $f$, referring Proposition \ref{Le-Semi}.  It is a pity
that  the semi-convexity holds true   just when $p>4$. In other hands,  Theorem \ref{SVT-VS} is applicable to   Problem (OC)$_p$ with $p>4$.  It is different from   finite horizon stochastic control problems, we refer  to \cite{CL-arxiv, GSZ1, GSZ2,  LHW-2023}.

%Let us remark the assumed conditions $p>3$ and    the semi-convexity of the viscosity solution of HJB equation \eqref{HJB-W-2} in Theorem \ref{SVT-VS}. As we have  seen in the  above proof,  they are crucial and indispensable.
%The proof    goes through only when $p>3$, rather than $p\ges2$.
%
% (ii) The value function $V_p(\cd)$ of  Problem (OC)$_p$ as a viscosity solution of \eqref{HJB-W-2}  really
% satisfies  the semi-convexity under some additional conditions on $b$, $\si$, $g$ and $f$, referring Proposition \ref{Le-Semi}.  It is a pity
%that  the semi-convexity holds true   just when $p>4$.
%
%
%   Based on the above summary, Theorem \ref{SVT-VS} is in fact applicable to   Problem (OC)$_p$ with $p>4$.  It is different from   finite horizon stochastic control problems, we refer  to \cite{CL-arxiv, GSZ1, GSZ2,  LHW-2023}.
\er
 
%??????????

\subsection{Optimal Feedback  Control Laws}
\par  In this subsection, we present the construction of    optimal feedback control laws of Problem (OC)$_p$ from the viscosity solution of HJB equation \eqref{HJB-W-2}. The study here  is restricted to $p>4$, due to Remark \ref{Re-feed-law}.

\bl\label{th4.10}\sl Let  {\bf (C1)$_p$}-{\bf (C3)}, {\bf(C4)$'$},  {\bf (D1)}, {\bf (D2)}   and  \eqref{b4} hold true. Then the value function $V_p(\cd)$ defined in \eqref{bar u} is the only function in $C_{pol}(\dbR^n)$ satisfying Lemma \ref{W-Lip}, Proposition \ref{Le-Semi} and   the following: for all $x\in   \dbR^n$,  $(P,Q)\in D^{2,-}_{x}V_p(x)$ and any  $\f\in \Xi  (P,Q,V_p)(x)$,
\bel{VIS-inequ}
   \sup_{u\in U}\dbH(x,V_p(x),P,Q,\cB^u\f(x),\cC^u\f(x),u) \ges 0.
\ee

\el

\begin{proof}
	From Theorem \ref{Th-exist}, we know the value function $V_p(\cd)\in C_{pol}(\dbR^n)$ is the   unique viscosity solution of \eqref{HJB-W-2}.  By Lemma \ref{L11}, for every $(P,Q)\in D^{2,-}_{x}V_p(x)$,  we  get the existence of the functions $\f\in  \Xi (P,Q,V_p)(x)$.
% such that, for any $ y\in \dbR^n$, $y\neq x$,
%	$\f(y)<W\big(y),$  and $$\big(\f(x),\f_x(x),\f_{xx}(x)\big)=\big(V_p(x),P,Q\big).$$	
%	%
For any such test function  $\f$, we obtain
	$$  \sup_{u\in U}\dbH(x, V_p(x),D\f (x),D^2\f (x),\cB^u\f(x),\cC^u\f(x),u)  \ges 0,$$
from   Definition \ref{Def-vis}, Remark \ref{Re-Def} and  Theorem \ref{Th-exist}.
So, \eqref{VIS-inequ} holds true.
	%
	%The second one in \eqref{VIS-inequ} can also  be  proved  similarly.
	%
	On the other hand, the uniqueness comes from the uniqueness of the viscosity solution of  \eqref{HJB-W-2} in $C_{pol}(\dbR^n) $.
 %and $D^{2,+}_{x}V_p(x)\subseteq D^{2,+}_{x}V_p(x)$, $D^{2,-}_{x}V_p(x)\subseteq D^{2,-}_{x}V_p(x)$.

\end{proof}

\bt\label{th4.11} \sl Assume {\bf (C1)$_p$}-{\bf (C3)}, {\bf(C4)$'$},  {\bf (D1)}, {\bf (D2)}   and  \eqref{b4} hold true. Let $W\in   C_{pol}( \dbR^n)$   be      a viscosity solution of
\eqref{HJB-W-2}. Then, for all $x\in\dbR^n$,
\bel{equ012}
\ds\sup_{(P,Q,\f,u)\in D^{2,-}_{x}W(x)\times  \Xi  (P,Q,W)(x) \times U}\[\dbH(x, W(x),P,Q,\cB^u\f(x),\cC^u\f(x),u)\]\ges 0 . %$V(t,x)\les h(t,x).
\ee
Furthermore,  if $\mathbbm{u}(\cdot)\in\sU^p$ and    $\cP ,$ $\cQ $ are measurable functions
satisfying $( \cP (x),\cQ (x))\in D^{2,-}_{x}W(x)$  for all $x\in\dbR^n$, and for any $T>0$,
\bel{equ015}\left\{\ba{ll}
 \ds\!\!\! {\rm (i)} \     \cP (X^{x;\mathbbm{u}}_s).\si\big(X^{x;\mathbbm{u}}_s ,\mathbbm{u}(X^{x;\mathbbm{u}}_s)  \big)=Z^{x;\mathbbm{u}}_s, \\
\ns\ds\!\!\! {\rm(ii)} \  W \big(X^{x; \mathbbm{u}}_{s-}+\g(e, X^{x;\mathbbm{u}}_{s-}  ,\mathbbm{u}(X^{x;\mathbbm{u}}_ {s-}) )\big)-W \big(X^{x; \mathbbm{u} }_{s-} \big)=K_s^{x;\mathbbm{u}}(e),\q e\in E;\\
\ns\ds\!\!\! {\rm (iii)} \  \dbE\big[\dbH\big( X^{x;\mathbbm{u}}_s ,Y^{x;\mathbbm{u}}_s,  \cP (X^{x;\mathbbm{u}}_s),\cQ (X^{x;\mathbbm{u}}_s),\cB^{\mathbbm{u}}\f(X^{x;\mathbbm{u}}_s),C^{\mathbbm{u}} \f(X^{x;\mathbbm{u}}_s),\mathbbm{u}(X^{x;\mathbbm{u}}_s) \big) \big]\ges 0,\\
\ns\ds\hskip 5cm  \mbox { with some }   \f\in\Xi (\cP(X^{x; \mathbbm{u} }_s ),\cQ (X^{x; \mathbbm{u} }_s ),W )(X^{x; \mathbbm{u} }_s ) ;\\
\ns\ds\!\!\! {\rm(iv)}  \ \liminf\limits_{T\to\i}\dbE[W\big(X^{x;\mathbbm{u}}_T \big)]=0,
\ea\right.\ee
  hold, $\dbP$-a.s. a.e.  $s\in[0,T] $,
where $ X^{x;\mathbbm{u} }(\cd)$, $(Y^{x;\mathbbm{u} },Z^{x;\mathbbm{u} },K^{x;\mathbbm{u} })$ satisfy   \eqref{SDE-u} and  \eqref{BSDE-u} with $ \mathbbm{u} (X^{x;\mathbbm{u} }_\cd) \in \cU _{0,\i}^p$, respectively.
Then,     $ \mathbbm{u}(\cd)$ is an optimal feedback  control law   of Problem (OC)$_p$.

\et

\begin{proof}
	%delete p
	From the uniqueness of   viscosity solution of HJB equation \eqref{HJB-W-2} and   Lemma \ref{th4.10},  \eqref{equ012} holds obviously.
%	for any $x\in\dbR^n$,   $ (P,Q)\in D^{2,-}_{x}W(x)$, $u\in U$ and any $\f\in \Xi  (P,Q,W)(x)$,  we have
%	%
%	$$    \sup_{u\in U}\dbH(x,W(x), P,Q,\cB^u\f(x),\cC^u\f(x),u)  \ges 0.
%	$$
%	Then,

  Next, for any $x\in\dbR^n$, the admissible feedback control law  $\mathbbm{u}(\cd)$ and the solution $X^{x;\mathbbm{u} } $  of \eqref{SDE-u}, we set
	$$
	\bar u(s):= \mathbbm{u}(X^{x;\mathbbm{u}}_s ),\enspace \bar P(s):= \cP (X^{x;\mathbbm{u}}_s ),\enspace  \bar Q(s) := \cQ (X^{x;\mathbbm{u}}_s ),\enspace s\ges 0.
 $$
Using \eqref{equ015},  we know $\bar u(\cd)$, $X^{x;\mathbbm{u} }$,  $(Y^{x;\mathbbm{u} },Z^{x;\mathbbm{u} },K^{x;\mathbbm{u} })$ and  the above $(\bar P, \bar Q)$ satisfy (i)-(v) in Theorem \ref{SVT-VS}. Therefore,  $ \bar u(\cd)$ is an optimal control, i.e.,  $\mathbbm{u}(\cd)$ is an optimal feedback control law.
	
\end{proof}

Finally, we have a look at the procedures of finding the optimal feedback control law.
 Under  {\bf (C1)$_p$}-{\bf (C3)},   {\bf(C4)$'$}, {\bf (D1)},  {\bf (D2)}   and  \eqref{b4}, once we get a viscosity solution $W\in   C_{pol}( \dbR^n)$ of
\eqref{HJB-W-2},   the first thing is to find a measurable selection $\mathbbm{u}( \cd)$  and
$( \cP (x),\cQ(x) )\in D^{2,-}_{x}W(x)$ and $\f \in \Xi  (\cP(x),\cQ(x),W)(x)$ to achieve the maximum of
 $ \dbH(x, W(x),P,Q,\cB^u\f(x),\cC^u\f(x),u) $
over $D^{2,-}_{x}W(x)\times \Xi  (P,Q,W)(x)\times U$.
Further,  we need to make sure $\mathbbm{u}( \cd)$ belongs to $\sU^p$ and \eqref{equ015} is valid under  $( \cP (x),\cQ(x) )\in D^{2,-}_{x}W(x)$ and $\f \in \Xi  (\cP(x),\cQ(x),W)(x)$, which ensure  the measurable selection $\mathbbm{u}( \cd)$  to be the true optimal feedback control law.

 The above construction process   is actually a big project, and there are still some research  blanks needed to be solved.
We guess the current popular theory of   reinforcement learning may behavior better in this regard.
Due to the  space limitation, we now stop here and  hope to get some results about the above      in the future works.

%In the above three steps, the first  and
%the relevant  discussion on  SDE \eqref{SDE-u} in the second  step  have been made in Section 6, Chapter 5 of \cite{YZ}.
%The remaining  is still blank and will be a big project. We skip it now and  hope to get some research results    in the future works.
%


\begin{thebibliography}{99}
%\bibitem{Agram-Oksendal-2014}  Agram, N.,     {\O}ksendal B.  (2014)  \textit{Infinite horizon optimal control of forward-backward stochastic differential equations with delay,}
%J. Comput. Appl. Math., 259,  336-349.
\bibitem{Applebaum-2009}
Applebaum, D.  \textit{L\'{e}vy processes and stochastic calculus,} Cambridge
University Press, 2009.

\bibitem{BBP-1997} Barles G., Buckdahn R. and Pardoux E. (1997) \textit{Backward stochastic differential equations and integral-partial differential
equations,}
Stochastics   Stochastic Rep., 60, 57-83.


 \bibitem{BHL-2011}
 Buckdahn  R., Hu  Y. and  Li J. (2011) \textit{Stochastic representation for solutions of Isaacs' type integral-partial differential equations}, Stochastic Processes  Appl., 121,   2715-2750.

 \bibitem{BHL-2012}  Buckdahn  R., Huang  J. and  Li J. (2012) \textit{Regularity properties for general HJB equations: a
backward stochastic differential equation method,} SIAM J. Control Optim.,
  50(3),   1466-1501.

 \bibitem{BLZ-2021}  Buckdahn  R., Li  J. and Zhao N. (2021) \textit{Representation of limit values for nonexpansive stochastic differential games,}  J. Differential Equations, 276, 187-227.



\bibitem{CL-arxiv}  Chen L.,  L\"{u} Q., \textit{Stochastic verification theorem for infinite
dimensional stochastic control systems}, arxiv.org/abs/2209.09576.
%\href{https://arxiv.53yu.com/abs/2209.09576}{arXiv:2209.09576}.
%

 \bibitem{CL-1983} Crandall M., Lions P.  (1983) \textit{Viscosity solutions of Hamilton-Jacobi equations,} Trans. Amer. Math. Soc., 277,
1-42.


 \bibitem{CLP-1992} Crandall M., Ishii H. and Lions P. (1992) \textit{User's guide to viscosity solutions of second order partial differential
equations,} Bull. Amer. Math. Soc., 27, 1-67.


\bibitem{FK-1989} Fujiwara, T., Kunita, H. (1989). \textit{Stochastic differential equations of jump type and
L\'{e}vy processes in diffeomorphism group,} J. Math. Kyoto Univ., 25(1), 71-106.

\bibitem{GSZ1}   Gozzi   F.,  \'{S}wi\c{e}ch A. and Zhou  X. (2005)  \textit{A corrected proof of the stochastic verification theorem within the framework of viscosity solutions}, SIAM J. Control Optim.,  43(6), 2009-2019.

\bibitem{GSZ2} Gozzi   F.,   \'{S}wi\c{e}ch A. and   Zhou X. (2010)   \textit{Erratum: ``A corrected proof of the stochastic verification theorem within the framework of viscosity solutions"}, SIAM J. Control Optim.,   48(6), 4117-4179.


\bibitem{Haadem-Oksendal-2013} Haadem  S., {\O}ksendal  B. and Proske  F. (2013) \textit{Maximum principles for jump diffusion processes with infinite horizon,}
Automatica J. IFAC, 49(7), 2267-2275.

\bibitem{Hu-Shi-Xu-2023} Hu  Y., Shi   X. and Xu  Z. Comparison theorems for multi-dimensional BSDEs with jumps and
applications to constrained stochastic linear-quadratic control. arxiv.org/abs/2311.06512v1.


 \bibitem{Jing-2013}   Jing S. (2013) \textit{Regularity properties of viscosity solutions of integro-partial differential equations of Hamilton-Jacobi-Bellman type,}  Stochastic Process. Appl., 123,  300-328.


 \bibitem{Kuang}   Kuang J.  (2003) \textit{Applied inequalities,}  Shandong Science and Technologu Press, 3nd ed.





\bibitem{LLW-2021} Li  J., Li  W. and Wei  Q.  (2021) \textit{Probabilistic interpretation of a system of coupled Hamilton-Jacobi-Bellman-Isaacs equations,}  ESAIM Control Optim. Calc. Var., 27, S17.


\bibitem{LP-2009} Li  J., Peng S.   (2009) \textit{Stochastic optimization theory of backward stochastic differential
equations with jumps and viscosity solutions of
Hamilton-Jacobi-Bellman equations,}  Nonlinear Anal., 70, 1776-1796.

%\bibitem{LW-SIAM} Li  J., Wei  Q.M.  (2014) \textit{Optimal control problems of fully
%coupled FBSDEs and viscosity solutions of Hamilton-Jacobi-Bellman equations,}
%SIAM J. Control Optim. 52(3), 1622-1662.

\bibitem{LW-SPA} Li  J., Wei  Q.  (2014) \textit{ $L^p$-estimates for fully coupled FBSDEs
with jumps,} Stocha. Process. Appl., 124, 1582-1611.

\bibitem{LW-AMO} Li  J., Wei  Q. (2015) \textit{ Stochastic differential games for fully
coupled FBSDEs with jumps,} Appl. Math. Optim.,  71(3),   411-448.



\bibitem{LZ-2019} Li  J., Zhao N. (2019) \textit{Representation of asymptotic values for nonexpansive stochastic control systems,} Stochastic Process. Appl., 129(2), 634-673.


\bibitem{Li-Wu-Yu-2018}
Li  N., Wu Z. and Yu  Z. (2018) \textit{Indefinite stochastic linear-quadratic optimal control problems with random jumps and related stochastic Riccati equations,}
Sci. China Math., 61(3), 563-576.


\bibitem{LHW-2023} Liu L., Hu X. and Wei Q. (2023)  \textit{Stochastic verification theorems for stochastic
control problems of reflected FBSDEs,} arxiv.org/abs/2304.03470.


%\bibitem{Meng-2014} Meng  Q. (2014) \textit{General linear quadratic optimal stochastic control problem driven by a Brownian motion and a Poisson random martingale measure with random coefficients,}
%Stoch. Anal. Appl., 32(1), 88-109.

 \bibitem{Jun-Baser-2022} Moon  J., Ba\c{s}ar T. (2022) \textit{Dynamic programming and a verification theorem for the recursive stochastic control problem of jump-diffusion models with random coefficients,}
%
IEEE Trans. Automat. Control, 67(12), 6474-6488.


\bibitem{PP-1990}   Pardoux E., Peng. S. (1990) \textit{Adapted solution of a backward stochastic differential equation,} Systems Control Lett., 14(1), 55-61.

%\bibitem{PP-1992}
%Pardoux E., Peng  S. (1992) \textit{Backward stochastic differential equations and quasilinear parabolic partial
%differential equations,} In Stochastic partial differential equations and their applications, pages 200-217.
%Springer.
%
%
%
%\bibitem{PT-1999}  Pardoux E.,   Tang S. (1999) \textit{Forward-backward stochastic differential equations and quasilinear parabolic
%PDEs,} Probab. Theory Related Fields, 114(2), 123-150.


 \bibitem{Peng-1997}
  Peng S. (1997) \textit{Backward stochastic differential equations-stochastic optimization theory and viscosity
solutions of HJB equations} (in Chinese), in Topics on Stochastic Analysis, J. A.
Yan, S. G. Peng, S. Z. Fang, and L. M. Wu, eds., Science Press, Beijing,   pp. 85-138.

\bibitem{Peng-Shi-2000} Peng  S., Shi  Y. (2000) \textit{Infinite horizon forward-backward stochastic differential equations,} Stoch. Process. Appl., 85, 75-92.



%\bibitem{Quenez-Sulem-2013} Quenez  M., Sulem  A.  (2013) \textit{BSDEs with jumps, optimization and applications to dynamic risk measures,}
%Stochastic Process. Appl., 123(8), 3328-3357.


%\bibitem{Royer-2006}Royer  M. (2006) \textit{Backward stochastic differential equations with jumps and related non-linear expectations,}
%%
%Stochastic Process. Appl., 116(10), 1358-1376.


\bibitem{Shen-Siu-2013}
Shen  Y., Siu  T. (2013) \textit{The maximum principle for a jump-diffusion mean-field model and its application to the mean-variance problem,}
Nonlinear Anal., 86, 58-73.

\bibitem{Situ}
 Situ  R.  \textit{Theory of stochastic differential
equations with jumps and applications,}
Springer-Verlag,   2005.

\bibitem{STW-2020} Song Y.,  Tang S. and  Wu Z. (2020) \textit{The maximum principle for progressive optimal stochastic control problems with random jumps,}
SIAM J. Control Optim., 58(4), 2171-2187.


%\bibitem{Sun-Guo-Zhang-2018} Sun  Z., Guo  J. and Zhang  X. (2018) \textit{Maximum principle for Markov regime-switching forward-backward stochastic control system with jumps and relation to dynamic programming,}
%J. Optim. Theory Appl., 176(2), 319-350.




 \bibitem{Tang-Li-1994}  Tang S.,  Li X. (1994) \textit{Necessary conditions for optimal control of stochastic systems with random jumps,}
SIAM J. Control   Optim., 32(5), 1447-1475.


 \bibitem{Wei-Xu-Yu-2023}  Wei Q.,  Xu  Y. and  Yu Z. (2023) \textit{Infinite horizon mean-field linear quadratic optimal control
problems with jumps and the related Hamiltonian systems,}   arxiv.org/abs/2311.07018.



 \bibitem{WY-2021}  Wei Q.,    Yu Z. (2021) \textit{Infinite horizon forward-backward SDEs and
open-loop optimal controls for stochastic
linear-Quadratic problems with random
coefficients,} SIAM J. Control Optim.,  59(4),   2594-2623.



\bibitem{YZ}
 Yong J.,  Zhou X.    \textit{Stochastic controls-Hamiltonian systems and HJB equations}, Springer-Verlag, 1999.


 \bibitem{Yu-2017}    Yu Z. (2017)
\textit{Infinite horizon jump-diffusion forward-backward stochastic
differential equations and their application to backward
linear-quadratic problems,} ESAIM Control Optim. Calc. Var., 23,   1331-1359.

%\bibitem{Zhang}  Zhang L.,   \textit{Stochastic verification theorem of forward-backward controlled ststems for viscosity solutions,} Syst. Control Lett., 2012, 61, 649-654.

\bibitem{ZSX-2018} Zhang X., Sun Z. and  Xiong J. (2018) \textit{A general stochastic maximum principle for a Markov regime switching jump-diffusion model of mean-field type,}
SIAM J. Control Optim., 56(4), 2563-2592.

\bibitem{ZYL}    Zhou X.,  Yong J. and Li X. (1997)   \textit{Stochastic verification theorem within the framework of viscosity solutions}, SIAM J. Control Optim.,  35(1), 243-253.


\end{thebibliography}
\end{document}